\documentclass{article}

\usepackage[utf8]{inputenc} 
\usepackage{amsfonts}
\usepackage{amsmath}
\usepackage{amsthm} % not for journal version
\usepackage{amssymb}
\usepackage{color}
\usepackage{graphicx}
\usepackage{setspace}
\usepackage{enumerate}
\usepackage{booktabs}
\usepackage{pstricks}
\usepackage{url}
\usepackage{hyperref}
\usepackage{mathtools}
\usepackage{cancel}
\usepackage[ruled, boxed,  linesnumbered, norelsize]{algorithm2e}
\usepackage[normalem]{ulem}
\usepackage{multicol}
\hypersetup{
	colorlinks,
	linkcolor={red!50!black},
	citecolor={blue!50!black},
	urlcolor={blue!80!black}
}

\newcommand{\reInt}{\mathrm{ri}\,}

\newcommand{\lineality}{\mathrm{lin}\,}

\newcommand{\closure}{\mathrm{cl}\,}
\newcommand{\dirCone}{\mathrm{dir}\,}

\newcommand{\spanVec}{\mathrm{span}\,}
\newcommand{\norm}[1]{\lVert{#1}\rVert}
\newcommand{\inProd}[2]{\langle #1 , #2 \rangle }

\newcommand{\intOracle}{\hyperref[oracle:int]{\mathcal{O}_{\text{int}}}}
\newcommand{\feasP}{\mathcal{F}_{\text{P }}}
\newcommand{\feasD}{\mathcal{F}_{\text{D }}}
\newcommand{\feasS}{\mathcal{F}_{\text{D }}^S}
\newcommand{\PSDcone}[1]{{\mathcal{S}^{#1}_+}}
\newcommand{\PDcone}[1]{{\mathcal{S}^{#1}_{++}}}		
\newcommand{\tanSpace}[2]{ {T_{#1}#2}}			 
\newcommand{\tanCone}[2]{ \mathrm{tan}\,({#1},{#2})}	 
\newcommand{\minFaceD}{ {\mathcal{F}_{\min}^{\text{D}}}}
\newcommand{\minFaceP}{ {\mathcal{F}_{\min}^{\text{P}}}}
\newcommand{\stdMap}{ {\mathcal{A}}}

\newcommand{\stdCone}{ {\mathcal{K}}}
\newcommand{\stdSpace}{ \mathcal{L}}
\newcommand{\stdFace}{ \mathcal{F}}

\newcommand{\stdInt}{ {e}}
\newcommand{\pOpt}{ {\theta _{\rm P}}}
\newcommand{\dOpt}{ {\theta _{\rm D}}}
\newcommand{\matRank}{{\mathrm{ rank } \,}}	
\newcommand{\matRange}{{\mathrm{ range } \,}}	

\newcommand{\T}{*}
\newcommand{\Tr}{\top\hspace{-1pt}}   
\newcommand{\opt}[1]{ {\theta _{#1}}}
\newcommand{\minFacePoint}[2]{ {\mathcal{F}(#1,#2)}}
\newcommand{\dist}{ {\mathrm{dist}\,}}	
\renewcommand{\Re}{\mathbb{R}}    
\newcommand{\ambSpace}{ \mathcal{E} }
\renewcommand{\S}{\mathcal{S}}        
\newcommand{\minFace}[1]{ {\mathcal{F}_{\min}^{#1}}}            
    
\newcommand{\zM}[1]{0_{#1}}  
\newcommand{\relmiddle}[1]{\mathrel{}\middle#1\mathrel{}}
%%TT
\hoffset=-2.2cm
\voffset=-1.5cm
\textwidth=165mm
\textheight=220mm
\title{Solving SDP Completely with an Interior Point Oracle}

\author{Bruno F. Louren\c{c}o
\thanks{
Department of Statistical Inference and Mathematics, Institute of Statistical Mathematics, 10-3 Midori-cho, Tachikawa, Tokyo 190-8562, Japan.
(\texttt{bruno@ism.ac.jp})}
        \and
        Masakazu Muramatsu\thanks{
                     Department of Computer and Network Engineering, The University of Electro-Communications 1-5-1 Chofugaoka, Chofu-shi, Tokyo, 182-8585 Japan. (E-mail: \texttt{MasakazuMuramatsu@uec.ac.jp})
                  }
         \and       
                Takashi Tsuchiya
\thanks{
National Graduate Institute for Policy Studies 7-22-1 Roppongi, Minato-ku, Tokyo 106-8677, Japan. (E-mail: \texttt{tsuchiya@grips.ac.jp}) \newline 
B.~F.~Louren\c{c}o is partially supported by the
JSPS Grant-in-Aid for Young Scientists 19K20217.
M.~Muramatsu is   partially supported by the
JSPS Grant-in-Aid for Scientific Research (B)26280005, (C)17K00031 and (B)20H04145. 
T.~Tsuchiya is   partially supported by the
JSPS Grant-in-Aid for Scientific Research (B)15H02968. 
M. Muramatsu and T. Tsuchiya are also supported in part by the JSPS Grant-in-Aid for Scientific Research (B)24310112 and (C) 26330025.
B.~F.~Louren\c{c}o and T.~Tsuchiya are  also partially supported by the JSPS Grant-in-Aid for Scientific Research (B)18H03206.
                  }
        }

\date{July 2015 (Revised: November 2020)}

\newtheorem{definition}{Definition}
\newtheorem{lemma}[definition]{Lemma}
\newtheorem{proposition}[definition]{Proposition}
\newtheorem*{proposition*}{Proposition}

\newtheorem{theorem}[definition]{Theorem}
\newtheorem*{remark}{Remark}
%\newcounter{oracleAlg}
\makeatletter

\makeatother
\begin{document}
\renewcommand{\topfraction}{0.9}
\maketitle

\begin{abstract}
We suppose the existence of an oracle which  solves
any semidefinite programming  (SDP) problem  satisfying strong feasibility (i.e., Slater's condition) simultaneously at its primal and dual sides. 
We note that such an oracle might not be able to
directly solve general SDPs
even after certain regularization schemes are applied.
In this work we fill this gap and show 
how to use such an oracle to {``completely solve''} an arbitrary SDP.
Completely solving entails, for example, 
distinguishing between weak/strong feasibility/infeasibility and 
detecting when the optimal value is attained or not.
We will employ several tools, including 
a variant of facial reduction where all auxiliary problems are ensured 
to satisfy strong feasibility at all sides. Our 
main technical innovation, however, is an analysis of \emph{double facial reduction}, which is the process of applying facial 
reduction twice: first to the original problem and then once more to  
the dual of the regularized problem obtained during the first run.
Although our discussion is focused on semidefinite programming, the majority 
of the results are proved for general convex cones.

\noindent \textbf{Keywords:} double facial reduction, facial reduction,	semidefinite programming, feasibility problem.
%Here, we use the term \textit{to completely solve} an SDP to mean a
%scheme which works in the following way;
%given an SDP, the scheme checks whether it is feasible or not, and
%whenever feasible,
%computes its optimal value, and if the optimal value is attained, obtains an optimal solution. If the optimal value is 
%not attained, computes a feasible solution whose objective value is arbitrarily close to the optimal value. 
%Moreover, if the original SDP is infeasible, it distinguishes
%between strong and weak infeasibility, and in the case of weak infeasibility,
%can compute a point in the corresponding affine space whose distance
%to the positive semidefinite cone is less than an arbitrary small
%given positive number.
\end{abstract}

\section{Introduction}
Consider the following pair of primal and dual linear semidefinite programs (SDPs).
\begin{multicols}{2}
	\noindent\begin{align}
\underset{x}{\inf} & \quad \inProd{c}{x} \label{eq:primal}\tag{SDP-P}\\ 
\mbox{subject to} & \quad \stdMap x = b \nonumber \\ 
&\quad x \in \PSDcone{n} \nonumber
	\end{align}
	\noindent
	\begin{align}
\underset{y}{\sup} & \quad \inProd{b}{y} \label{eq:dual} \tag{SDP-D} \\ 
\mbox{subject to} & \quad c - \stdMap ^\T y \in \PSDcone{n} , \nonumber
	\end{align}	
\end{multicols}
\vspace*{-1\baselineskip} 
\noindent where $\PSDcone{n}$ denotes the cone of $n\times n$ symmetric positive semidefinite matrices, which is contained in $\S^n$ (the space of $n \times n$ real symmetric matrices). Here $\stdMap: \S^n \to \Re^m$ is a linear map, $b \in \Re^m$, $c \in \S^n$ and $\stdMap^\T$ denotes the adjoint map of $\stdMap$. 
In addition, we assume that both $\Re^m$ and $\S^n$ are equipped with the usual Euclidean product and the trace inner product, respectively. We will use the same symbol $\inProd{\cdot}{\cdot}$ to express both inner products. 
%We will denote the pair \eqref{eq:primal}, \eqref{eq:dual} by $(\PSDcone{n},\stdMap, b,c)$.

We start with the following observation.
\begin{quote}
\emph{To the best of our knowledge, all (or almost all) 
	methods for solving SDP require some kind of assumption on the problems 
\eqref{eq:primal} and \eqref{eq:dual} in order for its convergence theory to work. In addition, there seems to be no method that can solve arbitrary SDP instances and distinguish between all kinds of ill-behaviour that can happen in semidefinite programming.} 
\end{quote}

%The previous observation contains both surprising and non-surprising aspects. 
On one hand, it might seem almost obvious that some condition must be imposed on the pair \eqref{eq:primal} and \eqref{eq:dual} in order to get meaningful convergence results, which is the pattern established in classical nonlinear programming since the early days, where many convergence results required some constraint qualification to hold.
On the other hand, for linear programming we have the simplex method, which, at least in theory, is able to solve any linear program and detect all possible outcomes including infeasibility and unboundedness.
Given that semidefinite programming is one of the most natural extensions of linear programming, it is somewhat disappointing that, as of this writing, we still cannot claim to be able to solve SDPs in the same thorough way.

However, there are indeed classes of SDPs that we can reasonably claim that are solvable by current methods. One of those classes consists of 
the SDPs for which strong feasibility (also called \emph{Slater's condition}) holds 
at both \eqref{eq:primal} and \eqref{eq:dual}.
They are solvable, for instance, by using interior point methods  \cite{NN94,ali-95}. 
In this paper, we aim to show the following result.
\begin{quote}
Suppose we have access to an oracle that can solve any SDP instance, provided that the instance is both primal and dual strongly feasible. 
Then, we can ``completely solve'' \emph{any} SDP instance  with polynomially (in $n$) many calls	to this oracle.
\end{quote}
Later in Section~\ref{sec:sum} we state our results more precisely including a suitable definition of ``completely solving'' but, for now, we give some background to our research and results.

\subsection{Background and previous works}
%Interior point methods (IPMs) \cite{NN94,ali-95} are  one of the standard methods for solving
%semidefinite programs \eqref{eq:primal} and \eqref{eq:dual}. However, in order to function properly, they require 
%that the problem at hand satisfy certain regularity conditions which may 
%fail to be satisfied and this leads to numerical difficulties. The usual 
%requirement is that there is a primal feasible solution $x$ such that 
%$x$ is positive definite \emph{and} that there is a dual feasible solution $y$ 
%such that the corresponding slack $s = c -\stdMap^{\T}y$ is positive definite. In other words, 
%Slater's condition must hold for both \eqref{eq:primal} and \eqref{eq:dual}.
We now discuss briefly how strong feasibility is connected with some research trends in 
continuous optimization.

%In this article, we suppose the existence of an idealized machine that is 
%able to solve any SDP having primal and dual interior feasible points and discuss whether 
%it could be used to \emph{completely solve} other SDPs which might not necessarily 
%satisfy Slater's condition.
%In this paper, we define \emph{complete solvability} as follows.
%\begin{definition}[Completely solving an optimization problem]\label{def:comp_solve}
%	An algorithm, procedure or a scheme is said to 
%	\emph{completely solve} an SDP instance, 
%	if it achieves the following goals.
%	\begin{enumerate}
%		\item It decides whether the SDP is feasible or not.
%		\item When the SDP is feasible, it computes the optimal value.
%		If the optimal value is attained, it computes an optimal solution of maximal rank.
%		If the optimal value is not attained, given arbitrary small $\epsilon>0$,
%		it can compute an $\epsilon$-optimal solution.
%		\item When the SDP is infeasible, it distinguishes between strong infeasibility and weak infeasibility. If the instance is strongly infeasible, it computes a certificate of infeasibility.
%		If the SDP is weakly infeasible, then it find a matrix that is arbitrarily close to feasibility (this will be made precise later). \end{enumerate}
%\end{definition}

\begin{itemize}
	\item \emph{Interior point algorithms and software.} Most modern IPM softwares \cite{Sturm1999, SDPAReport,sdpt3} including the commercial solver Mosek do not require explicit knowledge 
	of  an interior feasible point beforehand. SeDuMi \cite{Sturm1999}, for instance, 
 transforms a standard form problem into the so-called homogeneous self-dual formulation, which 
 has a trivial starting point. SDPA \cite{SDPAReport} and SDPT3 \cite{sdpt3} use an infeasible interior point method. The fact 
 that these methods can work without explicit knowledge of an interior feasible point, does 
 not  mean that they do \emph{not require the existence of an interior feasible point}. Quite the opposite, 
 the absence of interior feasible points may introduce theoretical and numerical difficulties in recovering 
 a solution for the original problem. Also, detection of infeasibility is a complicated task. 
 Some interior point methods, such as the one discussed in \cite{nesterov_infeasible} by Nesterov, Todd and Ye,   
 are able to obtain a certificate of infeasibility if the problem is dual or primal strongly infeasible, but the 
 situation is less clear in the presence of the so-called \emph{weak infeasibility} \cite{lourenco_muramatsu_tsuchiya}. These issues are also discussed  by Karimi and Tun\c{c}el in~\cite{KT19_2}, in the context of their software \emph{DDS (Domain-Driven Solver)}~\cite{KT19,KT20}. 
	\item \emph{Ramana's extended dual.} %In general, there could be a nonzero duality gap between 
	%\eqref{eq:primal} and \eqref{eq:dual}, i.e., the optimal values of \eqref{eq:primal} and \eqref{eq:dual} might differ. To address this,	
	Ramana \cite{Ramana95anexact,ramana_strong_1997} developed an alternative duality 
	theory 	for \eqref{eq:dual} beyond the usual Lagrangian duality. Remarkable features 
	of Ramana's dual include the fact that whenever the optimal value of an SDP is finite, 
	its Ramana's dual attains the same optimal value without any further assumptions.
	However, Ramana's dual 
	is not necessarily suitable to be solved by IPMs due to the fact that it does not ensure the existence 
	of interior feasible points at both sides.
	
	\item \emph{Facial reduction.} 	
	Denote by $\feasS$, the set of 
	feasible slacks of \eqref{eq:dual}, i.e.,
	\[
	\feasS = \{s \in \PSDcone{n} \mid \exists y, s = c-\stdMap^\T y\}.
	\]
	Let $\minFaceD$ be the minimal face of $\PSDcone{n}$ which 
	contains $\feasS$. If we replace $\PSDcone{n}$ by 
	$\minFaceD$ in \eqref{eq:dual}, then the new \eqref{eq:dual} will be strongly feasible, because $\minFaceD$ is characterized as the unique face for which $\feasS$ intersects the relative interior of 
	$\minFaceD$.
	The process of finding $\minFaceD$ is called \emph{facial reduction} \cite{article_waki_muramatsu,pataki_strong_2013} and 
	was developed originally by Borwein and Wolkowicz \cite{Borwein1981495,borwein_facial_1981} for convex optimization with conic constraints.
	Descriptions for 
	the conic linear programming case have appeared, for instance, in Pataki \cite{pataki_strong_2013} and 	in Waki and Muramatsu~\cite{article_waki_muramatsu}.
	 We will overview facial reduction in more detail in Section~\ref{sec:fr}.
	
	However, one important point 
	is that facial reduction only guarantees that  strong feasibility is satisfied at one side of the problem. So, again, even this regularized problem might fail to have 	interior solutions at both primal and dual sides.
	
%	One of the problems with facial reduction is that  
%	even if we find some face for which the reduced system is 
%	strongly feasible, there is no guarantee that the corresponding primal will also 
%	be strongly feasible. That is, facial reduction only restores strong feasibility 
	%at one of the sides of the problem. So the unfortunate fact remains that even after performing  facial %reduction one may 
	%end up with a problem that is still not suitable to be solved by primal-dual interior 
	%point methods.
	
	We remark that strong feasibility at only one of the sides of the problem can also be a source 
	of numerical difficulties. In Section 2 of \cite{WNM12}, Waki, Nakata and Muramatsu shows an instance satisfying strong feasibility at the primal side, but not at the dual side. 
	Its optimal value is zero but 
	both SDPA \cite{SDPAReport} and SeDuMi \cite{Sturm1999} output $1$ instead.
	
	\item \emph{Algebraic approaches}. Henrion, Naldi and Din described an algebraic approach to the problem of obtaining a feasible 
	solution to \eqref{eq:dual}, see \cite{HND16, HND19}. Interesting features of their algorithm include, among others, the fact that their algorithm is implementable in exact arithmetic (as opposed to floating point arithmetic) and that, as long as \eqref{eq:dual} satisfies certain genericity assumptions,  the algorithm can find solutions even in degenerate cases when strong feasibility is not satisfied. In addition, when a solution is found, a so-called rational parametrization is provided for it.
	A description of their package \emph{Spectra} is given in \cite{HND19}. Drawbacks, however, include that in most cases, only small instances can be solved, see Section~1 of~\cite{HND19}. Furthermore, 	optimization problems cannot be solved directly. 

\end{itemize}
There is a growing body of research 
aimed at understanding SDPs and conic linear programs having pathological behaviours such as nonzero 
duality gaps and weak infeasibility. Here we will mention a few of them.
A problem is called \emph{weakly infeasible} if there is no 
feasible solution but the distance between the underlying affine space and the cone under consideration 
is zero. Weak infeasibility is known to be very hard to detect numerically, see for instance 
P\'{o}lik and Terlaky \cite{polik_new_2009}.
In \cite{waki_how_2012}, Waki showed that weakly infeasible problems sometimes 
arise from polynomial optimization. 
There is also a discussion on weak infeasibility semidefinite programming and second order cone programming in \cite{lourenco_muramatsu_tsuchiya} and \cite{LMT16}, respectively. 
Some of the results in \cite{lourenco_muramatsu_tsuchiya} were
generalized to arbitrary closed convex cones by Liu and Pataki, see \cite{LP15} for 
more details. See also \cite{LMT18}, where some results of \cite{LP15} on weakly infeasible problems are sharpened when the polyhedral faces of the underlying cone are taken into account.

It is hard to obtain finite certificates of infeasibility 
for SDPs, because there is no straightforward extension of Farkas' Lemma for non-polyhedral cones.
Another issue is that, as shown by Porkolab and Khachiyan \cite{PK97}, even a reasonably sized SDP may only have exponentially small feasible solutions, which makes it hard to detect feasibility/infeasibility numerically.

Nevertheless, the first finite infeasibility certificate was obtained by Ramana in \cite{Ramana95anexact} using his extended duality theory.
Since 
then, Sturm mentioned the possibility of obtaining a finite certificate for 
infeasibility by using the directions produced in his regularization procedure, see 
page 1243 of \cite{sturm_error_2000}. More recently,  
Liu and Pataki have also obtained finite certificates through elementary reformulations \cite{LP15_2}. 
Interestingly, Klep and Schweighofer \cite{klep_exact_2013} also obtained 
certificates through a completely different approach using 
tools from real algebraic geometry.
As we move from SDPs to conic linear programs over arbitrary cones, facial reduction seems to one of the few approaches that can provide finite certificates of infeasibility see, for example, 
\cite{LP15}.

In \cite{WNM12}, Waki, Nakata and Muramatsu discussed
 SDP instances for which known solvers failed to obtain the correct answer and in one case, 
this happened even though  the problem had an interior feasible point at the primal side.
In \cite{pataki_bad_sdps,P17}, Pataki gave a definition of ``bad behaviour'' and 
showed that  all SDPs in that class can be put in the same form, after performing an elementary 
reformulation. A discussion on duality gaps and many interesting examples of pathological 
SDPs are given by Tun\c{c}el  and Wolkowicz in \cite{TW12}.
Pataki has recently provided an extensive study of duality gaps in semidefinite programming in \cite{P18}. He showed, for instance, that all SDPs with positive duality gap and $m = 2$ (i.e., the dual problem has two variables) have a common reformulation, see Theorem~1 therein. 

\subsection{Summary and contributions of this work}\label{sec:sum}
%In general, SDPs can be  plagued by all sorts of ill-behavior and this stands 
%in sharp contrast to the relatively nice nature of SDPs where both the primal and dual are 
%strongly feasible. In order to capture this distinction, 
We consider the following oracle, which we will denote by $\intOracle$.

\begin{algorithm}[H]\NoCaptionOfAlgo
	%\caption{The interior point oracle $\intOracle$ for SDPs }\SetAlgoRefName{Oracle}
	\SetAlgoRefName{$O_{test}$}
	\caption{The interior point oracle $\intOracle$ for SDPs }\label{oracle:int}
	\DontPrintSemicolon
	\SetKwInOut{Input}{Input}\SetKwInOut{Output}{Output}
	\Input{The problem data: $\stdMap, b,c$. Both \eqref{eq:primal} and \eqref{eq:dual} must be strongly feasible.
%		\begin{enumerate}
%			\item The problem data: $\PSDcone{n}, \stdMap, b,c$ 
%			\item A dual relative interior solution: $y$ such that $c - \stdMap^\T y \in \reInt \PSDcone{n}$,
%			\item A primal relative interior solution: $x$ such that $\stdMap x = b$, $x \in \reInt \PSDcone{n}$.
%		\end{enumerate}
	}
	\Output{A zero duality gap optimal solution pair $x^*$, $y^*$. That is,
		$x^*$ and $y^*$ satisfy
		\begin{align*}
		\inProd{c}{x^*} & = \inProd{b}{y^*}\\
		c - \stdMap^\T y^* & \in \PSDcone{n} \\ 
		\stdMap x^* & = b\\
		x^* & \in \PSDcone{n}.
		\end{align*}
	}
\end{algorithm}%\setcounter{algocf}{0}%Reset the algorithm environment counter so that the other Algorithms start at 1. 
%We will refer to the interior point oracle by $\intOracle$.
We can regard $\intOracle$ as a machine 
running an idealized version of either the homogeneous self-dual embedding method \cite{PS98, KTR00,LSZ00},
an infeasible interior point method \cite{nesterov_infeasible}, the ellipsoid method or even 
an augmented Lagrangian method. 
An important point is that no assumption is made on the inner workings of the oracle. 
Now we are ready to define the meaning of \emph{completely solving an SDP}.

\begin{definition}[Completely solving \eqref{eq:dual}]\label{def:comp_solve}
	An algorithm, procedure or a scheme is said to 
	\emph{completely solve} \eqref{eq:dual}, 
	if it receives as input $\stdMap,b,c$ and $\epsilon > 0$ and achieves the following goals.
	\begin{enumerate}[$(a)$]
		\item It decides whether the \eqref{eq:dual} is feasible or not.
		\item When \eqref{eq:dual} is feasible, it computes the optimal value.
		If the optimal value is attained, it computes an optimal solution. %of maximal rank.
		If the optimal value is finite but not attained,	it computes an $\epsilon$-optimal solution. If \eqref{eq:dual} is unbounded (i.e., $\dOpt = +\infty$) this must be detected.
		\item When \eqref{eq:dual} is infeasible, it correctly distinguishes between strong infeasibility and weak infeasibility. 
%		If the instance is strongly infeasible, it computes a certificate of infeasibility.
%		
		If  \eqref{eq:dual} is weakly infeasible, then it finds a matrix that is arbitrarily close to feasibility (this will be made precise later). \end{enumerate}
\end{definition}

Although we focus on semidefinite programming, the majority 
of our results will be proved for general conic linear programs (CLPs).
Keeping this remark in mind, we now state our contributions in this paper.
\begin{enumerate}
	\item We present an algorithm for completely solving 
	general CLPs, provided that we can solve certain auxiliary problems that are strongly feasible, see Section~\ref{sec:dfr} and Algorithm~\ref{alg:comp}. In particular, 
	we will show that an \emph{arbitrary} SDP can be {completely solved} by $O(n)$ calls to $\intOracle$. 
	This implies that even 
	though an {arbitrary} SDP may have unfavourable properties, we can always completely solve it in the sense of Definition~\ref{def:comp_solve} if we assume that we are capable of solving instances that are  both primal and dual strongly feasible. An important feature of our approach is that it is 
	\emph{method agnostic} and does not rely in any way on the inner working of $\intOracle$. 
	See Appendix~\ref{app:ex} for an example of applying Algorithm~\ref{alg:comp} to a particularly ill-behaved instance.

	\item  We present a detailed discussion of \emph{double facial reduction} for general conic linear programs, which is the process of applying facial reduction twice: first to an CLP and then, to the dual of the regularized CLP obtained at the first step. 
	
	Through double facial reduction,  whenever the optimal value of \eqref{eq:dual} is finite, we are ensured to obtain a new pair of primal and dual strongly feasible problems and whose 
	common optimal value coincides with the optimal value of \eqref{eq:dual}.

	Although we cannot always recover optimal solutions for \eqref{eq:dual} from this new  pair of problems (after all, \eqref{eq:dual} might not even have optimal solutions in the first place), we will show how it is possible to obtain feasible solutions that are arbitrarily close to optimality, for any desired accuracy, by using the directions that appear when applying facial reduction. See Section~\ref{sec:almost} and Algorithm~\ref{alg:eps} for more details. The discussion on 
	obtaining almost optimal solution leads naturally to an approach 
	for obtaining almost feasible solution for weakly infeasible problems and this is discussed in Section~\ref{sec:inf}.

	\item We present several technical results about facial reduction that we believe might be of independent interest.
	For example, we show how to perform facial reduction by solving 
	auxiliary problems that are ensured to be  both primal and dual strongly feasible, see Lemma~\ref{lemma:red_sdp} and Algorithm~\ref{alg:fra}.

	We also provide a technical result on how the feasibility properties of a problem might change when facial reduction is applied to its dual, see Proposition~\ref{prop:feas_changes} and Theorem~\ref{theo:min_changes}.
\end{enumerate}
We remark that this paper is a thorough  extension and reformulation of an earlier technical report \cite{lourenco_muramatsu_tsuchiya3}, where the results were only proved for semidefinite programming by different techniques.

\subsection{Limitations of this work}\label{sec:lim}
%Although we stand by our theoretical results including the ones on facial reduction and double facial reduction, we must admit 
A limitation of this work is that the algorithm for completely solving conic linear programs (Algorithm~\ref{alg:comp}) is somewhat hypothetical.
This is because, except in very special cases \cite{VY95,W96,M05}, we cannot solve exactly an SDP even if it is primal and dual strongly feasible. Usually, the best we can do is to compute solutions that are approximately feasible and approximately optimal to some specified tolerance $\epsilon > 0$ or, under special circumstances, provide a rational parametrization to the solution set as in \cite{HND16,HND19}. So, strictly speaking, only an approximate version of the oracle $\intOracle$  might be practically implementable.
%Although we hope to provide some theoretical insight, in all likelihood, our approach does not directly lead to a practical algorithm for general problems. 

Missing from our analysis is how to deal with the case where there is 
some imprecision in the answer returned by $\intOracle$.
This is a very complex issue because since regularity conditions might fail, small perturbations in the input data might lead to problems whose optimal values are vastly different. Furthermore, impreciseness  whilst doing facial reduction might lead to a wrong face being computed and  feasible solutions could be inadvertently removed. 

 We believe however, that the analysis of the exact case is an important stepping stone and  we see a similar pattern in many subareas of optimization. For example, for augmented Lagrangian methods, understanding the behavior of the algorithm when subproblems are solved exactly seems to be quite important for getting the larger picture of the algorithm and its convergence analysis, even though, in practice, the subproblems are only approximately solved.
 
We remark that related approaches  by de Klerk, Roos and Terlaky \cite{KTR00} and Permenter, Friberg and Andersen \cite{PFA17} also assume that exact solutions are obtainable. However, numerical experiments are provided in \cite{PFA17} to check how their approach fare under inexactness. 
We provide a detailed comparison between \cite{KTR00, PFA17} and our approach in 
Section~\ref{sec:further}.
\subsection{Structure of this paper}
This paper is organized as follows. Section~\ref{sec:review} discusses the notation used throughout the paper and contains a review of the necessary notions from convex analysis.
Some technical aspects related to the faces of $\PSDcone{n}$ and interior point oracle $\intOracle$ are discussed in Section~\ref{sec:oracle}. Section~\ref{sec:fr} presents 
a facial reduction algorithm that is suitable to be used in conjunction with $\intOracle$. Section~\ref{sec:dfr} discusses double facial reduction and how it can be used to obtain almost optimal solutions and analyze weak infeasibility. 
Section~\ref{sec:comp} contains the description of an algorithm for completely solving a general conic linear program which can be adapted to use $\intOracle$ when the underlying 
cone is $\PSDcone{n}$. Section~\ref{sec:further} contains a discussion on related approaches. Section~\ref{sec:conc} concludes this work.
%
%This work is organized as follows. In Section \ref{sec:review}, we review a 
%few basic notions and also facial reduction. In Section \ref{sec:oracle} we define 
%the interior point oracle and discuss a version of facial reduction for SDPs. In
%Section \ref{sec:hyper}, we recall some properties of hyper feasible partitions and 
%show how they can be constructed with the interior point oracle. Section \ref{sec:opt_value} shows 
%how to compute the optimal value and in Section \ref{sec:recovery}, we discuss how to check 
%if the value is attained or not. Section \ref{sec:glueing} shows a complete list of 
%steps for completely solving \eqref{eq:dual} and in Section \ref{sec:further} we compare our 
%approach with previous works. Section \ref{sec:conc} wraps up this paper.

\section{Preliminary discussion and review of relevant notions}\label{sec:review}
Let $C\subseteq \ambSpace$ be a closed convex set contained in a real finite dimensional space $\ambSpace$. Its relative interior, closure, linear span and 
dimension are denoted  by $\reInt{C}$, $\closure C$, $\spanVec C$ and $\dim C$, respectively. We assume 
that $\ambSpace$ is equipped with some inner product $\inProd{\cdot}{\cdot}$ and we will denote by $C^\perp$ the subspace of $\ambSpace$ which contains the elements orthogonal to $C$ with respect 
to $\inProd{\cdot}{\cdot}$. 
We will denote by $\norm{\cdot}$ the norm induced by $\inProd{\cdot}{\cdot}$. For a pair of sets $C,D\subseteq \ambSpace$ we define the distance between $C$ and $D$ as
\[
\dist(C,D) \coloneqq \inf \{\norm{x-s} \mid x \in C, s \in D \}.
\]
If $x \in \ambSpace$, we will use $\dist(x, C)$ as a shorthand for 
$\dist(\{x\}, C)$.

If $\stdMap$ is a linear map, we will denote its image, kernel and adjoint  by $\matRange \stdMap, \ker \stdMap$ and $\stdMap^\T$, respectively.

For $\stdCone \subseteq \ambSpace$ a closed convex cone, we denote by $\lineality \stdCone$ the \emph{lineality space} of $\stdCone$, i.e., 
\[
\lineality \stdCone \coloneqq \stdCone \cap - \stdCone.
\]
We denote by $\stdCone^*$ the dual cone of $\stdCone$:
\[
\stdCone^* \coloneqq \{x \in \ambSpace \mid \inProd{s}{x} \geq 0, \forall s \in \stdCone \}.
\]
A closed convex subset  $\stdFace$ contained in $\stdCone$ is said to be a 
\emph{face} of $\stdCone$ if 
\[
s,\hat s \in \stdCone, \frac{s+\hat s}{2} \in \stdFace \Rightarrow s, \hat s \in \stdFace.
\]
The \emph{conjugate face} of $\stdFace$ is defined as
\[
\stdFace^{\Delta} \coloneqq \stdCone^*\cap \stdFace^\perp.
\]
Given $x \in \stdCone$, we write $\minFacePoint{x}{\stdCone}$ for the  intersection of all faces of $\stdCone$ containing $x$. $\minFacePoint{x}{\stdCone}$ is the minimal (with respect to inclusion) face of $\stdCone$ containing $x$.

For a given $x \in \stdCone$, we write $\dirCone (x,\stdCone)$ for the \emph{cone of 
	feasible directions} of $\stdCone$ at $x$. This is the set 
\[
\dirCone (x,\stdCone) \coloneqq \{z \in \ambSpace \mid \exists t > 0, x +tz \in \stdCone \}.
\]
The closure of $\dirCone (x,\stdCone)$ is the  \emph{tangent cone} of $\stdCone$ at $x$ and is denoted by $\tanCone{x}{\stdCone}$.
The \emph{tangent space} of $\stdCone$ at $x$ is the lineality space of $\tanCone{x}{\stdCone}$ and is denote by $\tanSpace{x}{\stdCone}$. In summary, we have
\begin{align*}
\tanCone{x}{\stdCone} & \coloneqq \closure \dirCone (x,\stdCone), \\
\tanSpace{x}{\stdCone} & \coloneqq \tanCone{x}{\stdCone}\cap - \tanCone{x}{\stdCone}.
\end{align*}
Some of the relationships between the sets defined so far will be summarized at Lemma~\ref{lemma:aux}.

Although our focus is on semidefinite programming, most of the results will be proved for the following primal and dual pair of general conic linear programs: 
\begin{multicols}{2}
	\noindent\begin{align}
	\underset{x}{\inf} & \quad \inProd{c}{x} \label{eq:c_primal}\tag{Conic-P}\\ 
	\mbox{subject to} & \quad \stdMap x = b \nonumber \\ 
	&\quad x \in \stdCone^* \nonumber
	\end{align}
	\noindent
	\begin{align}
	\underset{y}{\sup} & \quad \inProd{b}{y} \label{eq:c_dual} \tag{Conic-D} \\ 
	\mbox{subject to} & \quad c - \stdMap ^\T y \in \stdCone, \nonumber
	\end{align}	
\end{multicols}
\noindent where $\stdMap:\ambSpace \to \Re^m$ is a linear map, $b \in \Re^m$, $c \in \ambSpace$. Semidefinite programming corresponds to the specific case where $\ambSpace = \S^n$ and $\stdCone= \PSDcone{n}$.

We
will denote by $\pOpt$ and $\dOpt$, the  optimal values of 
 \eqref{eq:c_primal} and \eqref{eq:c_dual} respectively. 
It is understood that $\pOpt = +\infty$ if \eqref{eq:c_primal} is infeasible and 
$\dOpt = -\infty$ if \eqref{eq:c_dual} is infeasible. 
%The pair \eqref{eq:c_primal} and \eqref{eq:c_dual} is said to have \emph{zero duality gap} if $\pOpt = \dOpt$. 
The 
primal and dual feasible regions are defined as follows:
\begin{align*}
\feasP & \coloneqq  \{x \in \stdCone^* \mid \stdMap x = b\},\\
\feasD & \coloneqq \{y \in \Re^m \mid c - \stdMap ^\T y \in \stdCone \},\\
\feasS & \coloneqq \{s \in \stdCone \mid \exists y \in \Re^m, s = c-\stdMap^\T y\} = (c+ \matRange \stdMap^\T) \cap \stdCone.
\end{align*}
If $s \in \ambSpace$ can be written as $s = c - \stdMap^* y$ for some $y$, then $s$ is said to 
be a \emph{dual slack}. Furthermore, if $s \in \feasS$ then $s$ is called a \emph{dual feasible slack}.
The dual 
optimal value $\dOpt$ is said to be \emph{attained} if there is  $y \in \feasD$ such 
that $\inProd{b}{y} = \dOpt$. The  notion of 
primal attainment is analogous. 
We recall the following basic constraint qualification.
\begin{proposition}[Slater]\label{prop:slater}
Consider the pair \eqref{eq:c_primal} and \eqref{eq:c_dual}.
\begin{enumerate}[(i)]
	\item If there exists $x \in (\reInt \stdCone^* ) \cap \feasP$ then $\pOpt = \dOpt$. If, in addition, $\pOpt$ is finite then $\dOpt$ is attained.
	\item If there exists $s \in  (\reInt \stdCone) \cap \feasS$ then $\pOpt = \dOpt$. If, in addition, $\dOpt$ is finite then $\pOpt$ is attained.
\end{enumerate}
\end{proposition}

For the reader's convenience, before we proceed we recall a few  basic facts from convex analysis. We provide references for the items and/or short proofs.
\begin{lemma}\label{lemma:aux}%Todo: remove a few items
	Let $\stdCone\subset \ambSpace$ be a closed convex cone, $\stdInt \in \reInt \stdCone$, 
	$x \in \stdCone$ and $z \in \stdCone^*$. 
	\begin{enumerate}[$(i)$]
%		\item $\stdCone ^{**} = \stdCone$. \label{laux:sd}
%		\item $\lineality \stdCone = \stdCone^{*\perp}$, where $\stdCone^{*\perp}$ is a short-hand 		for $(\stdCone^*)^\perp$. \label{laux:lin}
%		
		\item $\stdCone^\perp = \lineality (\stdCone^*)$. \label{laux:perp}
		\item $x + \stdInt \in \reInt \stdCone$. \label{laux:ri}
		\item There exists $\alpha > 1$ such that $\alpha \stdInt + (1-\alpha)x\in \stdCone$. \label{laux:ri2}
		\item $z \in \stdCone^\perp$ if and only if $\inProd {\stdInt} {z} =  0$. \label{laux:ri3}
%		\item $x \in \reInt \minFacePoint{x}{\stdCone} $. \label{laux:fri}
		\item $\minFacePoint{x}{\stdCone}^{\Delta} = \stdCone^* \cap \{x\}^\perp$. \label{laux:conj}
		\item $(\tanCone{x}{\stdCone})^* = \minFacePoint{x}{\stdCone}^{\Delta}$. \label{laux:tan}
		\item $\tanSpace{x}{\stdCone} = \minFacePoint{x}{\stdCone}^{\Delta \perp}$. \label{laux:ts}
%		\item If $w \in \tanCone{x}{\stdCone}$ then $\lim _{t\to + \infty} \dist(tx + w, \stdCone) = 0$. \label{laux:dist}
	\end{enumerate}
\end{lemma}
\begin{proof}
	\begin{enumerate}[$(i)$]
%		\item This is the  bipolar theorem, see Theorem 14.1 of \cite{rockafellar}.
%		\item If $z \in \lineality \stdCone$, then $\inProd{z}{y} \geq 0$ and $\inProd{-z}{y} \geq 0$, for every 
%		$y \in \stdCone^*$. It follows that $z \in \stdCone^{*\perp}$. Reciprocally, if 
%		$z \in \stdCone^{*\perp}$, then $z \in \stdCone^{**} = \stdCone$, by item $(\ref{laux:sd})$. 
%		Since $\stdCone^{*\perp}$ is a subspace, we have $\stdCone^{*\perp} \subseteq \lineality \stdCone$.
		\item See item (a) of Proposition~2.1 in \cite{Tam85}.
		%First, suppose that $z \in \lineality (\stdCone^*)$.
%		Since 
%		$z, -z \in \stdCone^*$, we conclude that $z \in \stdCone^\perp$.
%		Conversely, suppose that $z \in \stdCone^\perp$.
%		Then, $-z$ belongs to 
%		$\stdCone^\perp$. Since, $\stdCone^\perp$ is contained in 
%		$\stdCone^*$, we conclude that $z,-z \in \stdCone^*$. That is, 
%		$z \in \lineality (\stdCone^*)$.
		\item Since $\stdInt \in \reInt \stdCone$, for any $z \in \stdCone$ we have 
		that all points in the relative interior of the line segment connecting 
		$z$ and $\stdInt$ also belong to the relative interior of $\stdCone$, see Theorem 6.1 of \cite{rockafellar}.
		Since \[x + \stdInt = \frac{1}{2}\stdInt + \frac{1}{2}(2x + \stdInt),\]we have 
		$x + \stdInt \in \reInt \stdCone$.
		\item See Theorem 6.4 in \cite{rockafellar}.
		\item If $z \in \stdCone^\perp$, then $\inProd {\stdInt} {z}$ is zero.
		Conversely, suppose that  $\inProd {\stdInt} {z}$ is zero. By item~$(\ref{laux:ri2})$, 
		there is $\alpha > 1$ such that\[u \coloneqq \alpha \stdInt + (1-\alpha)x\in \stdCone.\]
		On one hand, since $z \in \stdCone^*$, we have $\inProd{u}{z} \geq 0$. On the 
		other, $\inProd{u}{z} =  (1-\alpha)\inProd{x}{z} \leq 0$. So, we must have 
		$\inProd{x}{z} = 0$. As $x$ is an arbitrary element, it holds that $z \in \stdCone^\perp$.
%		\item Suppose $x \not \in \reInt \minFacePoint{x}{\stdCone}$. 
%		Then $x$ and 
%		$\minFacePoint{x}{\stdCone}$ can be properly separated by an hyperplane $H$ (see Theorem~11.3 in \cite{rockafellar}), where we recall that \emph{proper separation} means that $x$ and $\minFacePoint{x}{\stdCone}$ belong to opposite closed half-spaces defined by $H$ and at least one of them is not entirely contained in $H$. Since $x \in \minFacePoint{x}{\stdCone}$, we must have  $x \in H$ and, therefore, $\minFacePoint{x}{\stdCone}$ is not entirely contained in $H$. 
%		
%		The fact that $\minFacePoint{x}{\stdCone}$ is contained in one of the closed half-spaces defined by $H$ together with $x \in H$ imply that 
%		$\minFacePoint{x}{\stdCone} \cap H$ is a face of $\minFacePoint{x}{\stdCone}$ (and, therefore, a face of $\stdCone$).
%		We also have
%		\[
%		\minFacePoint{x}{\stdCone} \cap H \subsetneq \minFacePoint{x}{\stdCone},
%		\]
%		because $\minFacePoint{x}{\stdCone}$ is not entirely contained in $H$. This contradicts the minimality of $\minFacePoint{x}{\stdCone}$.
		
		\item[($v$) and ($vi$)] First, we observe that $\dirCone(x, \stdCone)$ coincides 
		$\{\alpha(w-x) \mid w \in \stdCone, \alpha \geq 0 \}$. The latter  is called the \emph{cone of $\stdCone$ at $x$} in the terminology of \cite{Tam85} and its dual is given by $\stdCone^* \cap\{x\}^\perp$. With this in mind, both items follow from Proposition~3.1 and Corollary~3.2 in \cite{Tam85}.
		%By definition, $\minFacePoint{x}{\stdCone} ^\Delta = \stdCone^* \cap \minFacePoint{x}{\stdCone}^\perp$.
%		Since $x \in \minFacePoint{x}{\stdCone}$, we have 
%		\[
%		\minFacePoint{x}{\stdCone} ^\Delta= \stdCone^* \cap \minFacePoint{x}{\stdCone}^\perp \subseteq 
%		\stdCone^* \cap \{x\}^\perp.
%		\]
%		For the converse, suppose that $s \in \stdCone^* \cap \{x\}^\perp$.
%		We have $s \in \minFacePoint{x}{\stdCone}^*$, since $\minFacePoint{x}{\stdCone} \subseteq \stdCone$.
%		By item $(\ref{laux:fri})$, $x \in \reInt\minFacePoint{x}{\stdCone}$. From item $(\ref{laux:ri3})$ we conclude that $s \in \minFacePoint{x}{\stdCone}^\perp$ and, therefore, $s \in \minFacePoint{x}{\stdCone} ^\Delta$.
%		\item[$(vii)$] First we show that $\minFacePoint{x}{\stdCone}^\Delta \subseteq (\tanCone{x}{\stdCone})^*$. 		
%		If $s \in \minFacePoint{x}{\stdCone}^\Delta$ and $z \in \dirCone (x,\stdCone)$, then, for some $t > 0$ we have \[\inProd{s}{x+tz} \geq 0.\] 
%		Because $\inProd{s}{x} = 0$, we must have $\inProd{s}{z} \geq 0$, which 
%		shows that $s \in \dirCone (x,C)^* = (\tanCone{x}{\stdCone})^* $, since a set and its closure have the same dual.
%		
%		Conversely, suppose that $s  \in (\tanCone{x}{\stdCone})^*$. Because $\stdCone \subseteq \tanCone{x}{\stdCone}$, we 
%		have that $s \in \stdCone^*$. In addition, since both $x$ and $-x$ belong to $\tanCone{x}{\stdCone}$, we 
%		have $\inProd{s}{x} = 0$. This implies that $s \in \stdCone^* \cap \{x\}^\perp$, which coincides with $\minFacePoint{x}{\stdCone}^\Delta$, by item $(\ref{laux:conj})$.
		\item[$(vii)$] Follows from  $(\ref{laux:perp})$ and $(\ref{laux:tan})$.
%		\item Since $\stdCone$ is a closed convex cone, we have 
%		$\dist(a+b,\stdCone) \leq \dist(a,\stdCone) + \dist(b,\stdCone)$, for all 
%		$a,b \in \ambSpace$. Now, 
%		for every $\epsilon > 0$, there exists $w_\epsilon \in \dirCone(x,\stdCone)$ such 
%		that $\dist(w,w_\epsilon) < \epsilon$. Moreover, there exists $t_\epsilon$ such 
%		that $t_\epsilon x + w_\epsilon \in \stdCone$. It follows that 
%		\begin{align*}
%		\dist(tx + w, \stdCone) & \leq \dist(tx + w_\epsilon, \stdCone) + \dist(w - w_\epsilon , \stdCone) \\
%		&\leq \dist(tx + w_\epsilon, \stdCone) + \epsilon,
%		\end{align*} 
%		where the last inequality follows from the fact that $0 \in \stdCone$, so 
%		$\dist(w - w_\epsilon , \stdCone) \leq \dist(w - w_\epsilon , 0)$. However, 
%		since $t_\epsilon x + w_\epsilon \in \stdCone$, we must have 
%		$\lim _{t\to + \infty}\dist(tx + w_\epsilon, \stdCone) = 0$, since for $t$ 
%		sufficiently large we have $tx + w_\epsilon \in \stdCone$. It follows 
%		that $\lim _{t\to + \infty} \dist(tx + w, \stdCone) \leq  \epsilon$. Since 
%		$\epsilon$ is arbitrary, we conclude that $(\ref{laux:dist})$ must hold.
	\end{enumerate}
\end{proof}

\subsection{Types of feasibility, almost optimality, almost feasibility}\label{sec:type}
Here, we review the fact that a conic linear program can be 
in four different mutually exclusive feasibility statuses. 
%Sometimes, we will be interested solely in the conic feasibility problem, which 
%we will denote by $(\stdCone , \stdSpace , c)$. This is 
%the problem of seeking a point  in the intersection $(\stdSpace + c)\cap %\stdCone$, where 
% $\stdSpace \subseteq \Re^n$ is a subspace and $c \in \Re^n$. There  
%four mutually exclusive categories that $(\stdCone , \stdSpace , c)$ can fall in:
We say that \eqref{eq:c_dual} is 
\begin{enumerate}[$(i)$]
	\item strongly feasible if $(\reInt \stdCone)  \cap (c+ \matRange\stdMap^\T ) \neq \emptyset$ (i.e., Slater's condition hold),
	\item weakly feasible if it is feasible but not strongly feasible,
	\item weakly infeasible if it is infeasible 
	but  $\text{dist}(c+\matRange\stdMap^\T , \stdCone) = 0$,
	\item strongly infeasible if $\text{dist}(c+\matRange\stdMap^\T , \stdCone) > 0$.
\end{enumerate}
Strong/weak feasibility/infeasibility of \eqref{eq:c_primal} is defined analogously by replacing  $(c+\matRange \stdMap^\T)$ by $\mathcal{V} \coloneqq \{x \mid \stdMap x = b\}$. As a matter of convention, if $\mathcal{V} = \emptyset$, we will say that \eqref{eq:c_primal} is strongly infeasible. If a problem is either weakly infeasible or weakly feasible we will say that it is in \emph{weak status}.
In view of these definitions, the usual assumption underlying interior point methods amounts to requiring both primal and dual strong feasibility.
%A primal-dual pair is called regular if both \eqref{eq:c-primal} and \eqref{eq:c-dual} are strongly feasible.

We have the following characterization of strong infeasibility, see Lemma 5 in \cite{Luo97dualityresults}. 

\begin{proposition}[Characterization of strong infeasibility]
\label{prop:infeas}	The following hold.
\begin{enumerate}[$(i)$]
	\item \eqref{eq:c_primal} is strongly infeasible if 
	and only if there exists $y$ such that
	\begin{equation}\label{eq:prop:si}
	\inProd{b}{y} = 1 \quad \text{and}\quad -\stdMap^\T y \in \stdCone.
	\end{equation}
	\item \eqref{eq:c_dual} is strongly infeasible if and only 
	if there exists $x$ such that
	\begin{equation}\label{eq:prop:si:d}
	\inProd{c}{x} = -1 \quad \text{and}\quad x \in   \stdCone^*\cap \ker \stdMap
	\end{equation}
\end{enumerate}	
\end{proposition}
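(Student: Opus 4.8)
The plan is to read this as a pair of theorems of the alternative and to obtain both parts from a single strong separation argument applied to a conic feasibility problem of the form ``find a point of $(W+p)\cap\stdCone$'' — once with the triple $(\stdCone,\ker\stdMap,\hat x)$ for the primal and once with $(\stdCone^*,(\ker\stdMap)^\perp,c)$ for the dual. The reduction I would use is: for a closed convex cone $\stdCone$ and an affine subspace $W+p$, writing $S:=(W+p)-\stdCone$ (a convex set, containing $p$), one has $\text{dist}(W+p,\stdCone)=\text{dist}(0,S)=\text{dist}(0,\closure S)$, and since $\closure S$ is closed this quantity is positive if and only if $0\notin\closure S$, which — by taking the metric projection of $0$ onto $\closure S$ — is equivalent to the existence of $s$ and $\delta>0$ with $\inProd{s}{z}\ge\delta$ for every $z\in S$. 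The point I want to stress is that the separation here is of a \emph{point} ($0$) from a \emph{closed} convex set, which is why the resulting inequality is uniformly bounded away from $0$; this is precisely the feature that distinguishes strong infeasibility from ordinary infeasibility, and it is the conceptual crux of the whole argument.

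For part $i.$ I would first dispose of the case where $\stdMap x=b$ has no solution: then the primal is strongly infeasible by convention and the first alternative of the statement holds, so the equivalence is trivially true; hence assume $\hat x$ satisfies $\stdMap\hat x=b$ and apply the reduction with $W=\ker\stdMap$, $p=\hat x$. The primal is then strongly infeasible iff there are $s$ and $\delta>0$ with $\inProd{s}{\hat x+d-k}\ge\delta$ for all $d\in\ker\stdMap$ and all $k\in\stdCone$. Since $d$ ranges over the subspace $\ker\stdMap$, this forces $s\in(\ker\stdMap)^\perp=\matRange\stdMap^T$, so $s=\stdMap^Ty$ for some $y\in\Re^m$; since $k$ ranges over the cone $\stdCone$, it forces $\inProd{\stdMap^Ty}{k}\le0$ for all $k\in\stdCone$, i.e.\ $-\stdMap^Ty\in\stdCone^*$; and taking $d=k=0$ gives $\inProd{b}{y}=\inProd{s}{\hat x}\ge\delta>0$. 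Rescaling $y$ by the positive scalar $1/\inProd{b}{y}$ yields \eqref{eq:prop:si}. For the converse, given such a $y$, for any $x$ with $\stdMap x=b$ and any $k\in\stdCone$ I compute $\inProd{-\stdMap^Ty}{x-k}=-1-\inProd{-\stdMap^Ty}{k}\le-1$, so $\norm{x-k}\ge 1/\norm{\stdMap^Ty}>0$ by Cauchy--Schwarz (here $\stdMap^Ty\neq0$ since otherwise $\inProd{b}{y}=\inProd{\stdMap\hat x}{y}=\inProd{\hat x}{\stdMap^Ty}=0$), whence $\text{dist}(\hat x+\ker\stdMap,\stdCone)>0$.

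Part $ii.$ runs in parallel, with no convention to worry about since the set of dual slacks $\feasS=\stdCone^*\cap(c+(\ker\stdMap)^\perp)$ lives in the always-nonempty affine subspace $c+(\ker\stdMap)^\perp$; thus \eqref{eq:dual} is strongly infeasible iff $\text{dist}(c+(\ker\stdMap)^\perp,\stdCone^*)>0$, and the reduction produces $s$ and $\delta>0$ with $\inProd{s}{c+d-k}\ge\delta$ for all $d\in(\ker\stdMap)^\perp$ and all $k\in\stdCone^*$. Now $d$ ranging over the subspace forces $s\in\ker\stdMap$; $k$ ranging over the cone forces $\inProd{s}{k}\le0$ for all $k\in\stdCone^*$, i.e.\ $-s\in(\stdCone^*)^*=\stdCone$ by the bipolar theorem (Lemma~\ref{lemma:aux}, item $i.$); and $d=k=0$ gives $\inProd{c}{s}\ge\delta>0$. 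Setting $x=-s$ and rescaling by $1/\lvert\inProd{c}{x}\rvert$ yields \eqref{eq:prop:si:d}, and the converse is the same Cauchy--Schwarz bound, this time from $\inProd{x}{c-\stdMap^Ty-k}=-1-\inProd{x}{k}\le-1$. There is no deep obstacle in any of this; the only things that need care are the sign and rescaling bookkeeping, keeping the bipolar identity $(\stdCone^*)^*=\stdCone$ straight, and — more conceptually — being sure that strong infeasibility really does reduce to separating the single point $0$ from the \emph{closed} convex set $\closure S$, rather than to properly separating two sets, since the latter would only recover ordinary infeasibility.
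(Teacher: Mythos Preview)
Your proof is correct and follows essentially the same approach as the paper: both reduce strong infeasibility to the equivalence ``$\mathrm{dist}>0$ iff the two sets can be strongly separated'' and then extract the certificate from the separating functional exactly as in the proof of Proposition~\ref{prop:int_exist}. The paper simply cites Rockafellar's Theorem~11.4 for the separation step and leaves the converse implicit, whereas you make both directions self-contained --- the forward direction via metric projection of $0$ onto the closed convex set $\closure S$, and the converse via an explicit Cauchy--Schwarz lower bound --- but the underlying argument is the same.
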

Moving on, let $y \in \ambSpace$ and $\epsilon > 0$.
We say that $y$ is an \emph{$\epsilon$-feasible solution to \eqref{eq:c_dual}} if $\dist(c - \stdMap^\T y, \stdCone) \leq \epsilon$. 
In addition, we say that $y$ is an \emph{$\epsilon$-optimal solution to \eqref{eq:c_dual}} if 
$y$ is feasible for \eqref{eq:dual}  and $\inProd{b}{y} \geq \dOpt - \epsilon$. These notions will be used in Sections~\ref{sec:almost} and \ref{sec:inf}.

In general, 
even if $s = c-\stdMap^\T y$ is such that $\dist(s, \stdCone)$ is small, there is no guarantee that $\dist(s, \feasS )$ will also be small.
In this case, the quantities $\dist(s, \stdCone )$ and 
$\dist(s,\feasS)$ are sometimes called the \emph{backward error} and 
\emph{forward error}, respectively.
The problem of bounding the forward error by the backward error   is intrinsically connected with the notion of \emph{error bounds}. 
See, for example, the fundamental work by Sturm \cite{sturm_error_2000} on error bounds for linear matrix inequalities, where he showed the importance of facial reduction in analyzing these questions. See also \cite{L17,LLP20} for some generalizations of Sturm's results to the so-called
amenable cones and beyond.

%We also remark the following immediate consequence of Proposition \ref{prop:infeas}.
%\begin{corollary}\label{col:unbound}
%\begin{enumerate}[$i.$]
%	\item If the primal \eqref{eq:c_primal} is strongly infeasible and the dual  \eqref{eq:c_dual}
%	is feasible, then $\dOpt= +\infty$.
%	\item If the dual is strongly infeasible and the primal is feasible then $\pOpt = -\infty$. 
%\end{enumerate}
%\end{corollary}
%\begin{proof}
%To prove item $(i)$, note that by Proposition \ref{prop:infeas}, strong infeasibility at the primal side implies the 
%existence of $y$ such that $\inProd{b}{y} = 1$ and $-\stdMap ^\T  y \in \stdCone ^*$. So, if 
%$\hat y$ is any dual feasible solution then $\hat y+\alpha y$ is also feasible for every $\alpha \geq 0$ and we can make the objective 
%function value as large as we want. The proof of item $ii.$ is analogous.
%\end{proof}

\subsection{Facial structure of $\PSDcone{n}$ and a few remarks on $\intOracle$}\label{sec:oracle}
The cone of positive semidefinite symmetric matrices has a very special
structure and every face of $\PSDcone{n}$ is linearly isomorphic 
to some $\PSDcone{r}$ for $r \leq n$. This is a well-known fact 
which we state as a proposition for future reference.
For a proof, see \cite{pataki_handbook}. See also Section~6 of 
\cite{BC75}. Since it will be clear from the context, in what follows we use the convention that $0$ always denotes a zero matrix of appropriate size.
\begin{proposition}\label{prop:psdface}
Let $\stdFace$ be a nonempty face of $\PSDcone{n}$. There exists $r\leq n$ and an orthogonal $n\times n$ matrix $Q$ such that 
	\begin{equation}\label{eq:face}
	Q^\Tr \stdFace Q = \left\{ \begin{pmatrix}U & 0 \\ 0 & 0 \end{pmatrix}  \in \S^n \relmiddle{\vert} U \in \PSDcone{r} \right\}
	\end{equation}
\end{proposition}
Let $\stdFace$ be as in Proposition~\ref{prop:psdface}, then $\stdFace^*$ satisfies
\begin{equation}\label{eq:face_dual}
Q \stdFace ^* Q^\Tr = (Q^\Tr \stdFace Q)^* = \left\{ \begin{pmatrix}U & V \\ {V}^\Tr & W \end{pmatrix} \in \S^n \relmiddle{\vert} U \in \PSDcone{r} \right\}.
\end{equation}
Given an arbitrary nonempty face $\hat \stdFace$ of  $\stdFace^*$,
there is an  orthogonal matrix $\hat Q$ such that
\begin{equation}\label{eq:face_face_dual}
\hat Q \hat \stdFace\hat Q^\Tr =  \left\{ \begin{pmatrix}\left(\begin{array}{cc}U&0 \\ 0&0 \end{array}\right) & V \\ {V}^\Tr & W \end{pmatrix} \in \S^n \relmiddle{\vert} U \in \PSDcone{q} \right\},
\end{equation}
where $q \leq r$.  Then, $\hat \stdFace^*$ satisfies
\begin{equation}\label{eq:face_face_primal}
\hat Q^\Tr{\hat \stdFace}^* \hat Q  =  \left\{ \begin{pmatrix}\left(\begin{array}{cc}U&V \\ V^T& W \end{array}\right) & 0 \\ 0 & 0 \end{pmatrix} \in \S^n \relmiddle{\vert} U \in \PSDcone{q} \right\}.
\end{equation}

 %The machine does not need to receive 
%interior points, but it is  only guaranteed to work properly if  both \eqref{eq:primal} and 
%\eqref{eq:dual} have interior feasible points. 

%Our discussion could be framed precisely in the real computation model of Blum, Shub and Smale \cite{blum_complexity_1997}, 
%but for simplicity, apart from  $\intOracle$, we will only assume that  all the elementary real arithmetic operations 
%can be carried out exactly. Under this setting, we can use  $\intOracle$to compute 
%the square root of any positive real number $\beta$ by solving the SDP $\dOpt =  \sup \{ x \mid \bigl(\begin{smallmatrix} 1  & x \\ x & \beta \end{smallmatrix} \bigr) \in \PSDcone{2}  \}$.
%Since it has both primal and dual interior feasible solutions, it is licit to call  $\intOracle$ to solve it.

In the definition of $\intOracle$,
the affine space is contained in the space of $n\times n$ symmetric matrices 
and the optimization is carried over $\PSDcone{n}$. Note that $n$ is 
the same for both $\PSDcone{n}$ and $\S^n$. However, for fixed $\stdMap,b,c $ we might 
be interested in solving problems over a \emph{face} of $\PSDcone{n}$, the dual of a face of $\PSDcone{n}$ or even over a face of the dual of a face as in \eqref{eq:face_face_dual}.

%For this subsection, suppose that $\stdCone$ is a face of $\PSDcone{n}$.
In those cases, even if \eqref{eq:c_dual} and \eqref{eq:c_primal} are
both primal and dual strongly feasible, it is not immediately clear how to use  $\intOracle$ to solve \eqref{eq:c_dual} and \eqref{eq:c_primal}, since they are not exactly standard form SDPs. One possibility would be to consider a, \emph{a priori}, stronger 
oracle that is also able to solve strongly feasible problems over faces of $\PSDcone{n}$.

We will show that this is not necessary and, after some linear algebra, we can still solve \eqref{eq:c_primal} and \eqref{eq:c_dual} using $\intOracle$.
We register this fact as a proposition.
Let $\PSDcone{r,n}$ denote the face of $\PSDcone{n}$ corresponding to the right-hand side 
of \eqref{eq:face}.
Let $\stdCone$ be a cone as in \eqref{eq:face}, \eqref{eq:face_dual}, \eqref{eq:face_face_dual} or \eqref{eq:face_face_primal}. We see that in all those cases, we have 
\begin{equation}\label{eq:cone_ref}
R\stdCone R^\Tr = \PSDcone{r,n} \oplus \stdSpace,
\end{equation}
for some orthogonal matrix $R$, some $r \leq n$ and some linear subspace $\stdSpace \subseteq \S^n$ 
such that   $ \stdSpace \subseteq (\PSDcone{r,n})^\perp$. Here $\oplus$ denotes the Minkowski sum.
\begin{proposition}\label{prop:refor}
Let $\stdCone$ be as in \eqref{eq:cone_ref} with some orthogonal matrix $R$, some $r \leq n$, and
some linear subspace $\stdSpace \subseteq \S^n$ such that $\stdSpace\subseteq (\PSDcone{r,n})^\perp$.
Suppose that  \eqref{eq:c_primal} and \eqref{eq:c_dual} are strongly feasible. 	Then, \eqref{eq:c_primal} and \eqref{eq:c_dual} are solvable 
	with a single call to $\intOracle$.
\end{proposition}
The proof is elementary but quite cumbersome, so it is deferred to Appendix~\ref{app:refor}.

\section{Facial reduction with $\intOracle$}\label{sec:fr}
A major obstacle for solving \eqref{eq:dual}  with 
the oracle $\intOracle$ is that, in general, \eqref{eq:dual} is not strongly feasible, i.e., Slater's condition might not hold. 
%Therefore, a critical step towards solving \eqref{eq:dual} is to reformulate \eqref{eq:dual} as an SDP instance that is primal and dual strongly feasible.
By using facial reduction, we are able to either detect infeasibility or to reformulate \eqref{eq:dual} as an SDP instance that is strongly feasible at \emph{one side} of the problem.
This will be an important step towards completely solving \eqref{eq:dual}.

In this section, we discuss facial reduction for general conic linear programs and how it can be carried out by solving auxiliary problems that are ensured to be strongly feasible at both primal and dual sides. 
In particular, when the underlying cone $\stdCone$ is 
$\PSDcone{n}$, this will mean that facial reduction can be implemented through calls to $\intOracle$.
Although we will focus on problems formulated in the dual form \eqref{eq:c_dual}, any analysis carried out for \eqref{eq:c_dual} can be translated back to \eqref{eq:c_primal}. Here, we will follow the approach described in \cite{article_waki_muramatsu}, which relies on the following key result.

\begin{lemma}[The facial reduction lemma: Lemma 3.2 in \cite{article_waki_muramatsu}]\label{lem:fr}
The following hold.
\begin{enumerate}[$(i)$]
\item 
	\eqref{eq:c_dual} is not strongly feasible (i.e., Slater's condition fails) if and only if there is $d \in \stdCone^* \cap \ker \stdMap$ such that:
	\begin{enumerate}[$(i)$]
		\item  $\inProd{c}{d} = 0$ and $d \not \in \stdCone^\perp$, or
		\item $\inProd{c}{d} < 0$.
	\end{enumerate}
\item 
\eqref{eq:c_primal} is not strongly feasible (i.e., Slater's condition fails) if and only if there are
$y \in \mathbb{R}^m, f \in \stdCone$ such that $f =- \stdMap^\T y$ and
	\begin{enumerate}[$(i)$]
		\item  $\inProd{b}{y} = 0$ and $f \not \in \stdCone^{*\perp}=\lineality \stdCone$  (item $\eqref{laux:perp}$ of Lemma~\ref{lemma:aux}), or
		\item $\inProd{b}{y} > 0$.
	\end{enumerate}
\end{enumerate}
\end{lemma}
%\begin{remark}
%The primal counter part of Lemma~\ref{lem:fr} is obtained when $\ker \stdMap, \stdCone^*$ are substituted by $\matRange \stdMap^\T, \stdCone$ respectively and the condition on $c$ is replaced by a condition on $b$. To wit,
%\eqref{eq:c_primal} is not strongly feasible if and only if 
%there exists $(f,y)$ such that $f = -\stdMap^\T y \in \stdCone$ and 
%either $(i)$ $\inProd{b}{y} = 0$ and $f \not \in \stdCone^{*\perp} = \lineality \stdCone$ (item $\eqref{laux:perp}$ of Lemma~\ref{lemma:aux})
%holds or $(ii)$ $\inProd{b}{y} > 0$ holds.
%\end{remark}

%TODO: write the comments bellow as a proposition or add to Lemma 7
Therefore,  whenever \eqref{eq:c_dual} lacks a relative interior solution (i.e., $ \stdCone \cap (c+\matRange \stdMap^\T) = \emptyset$), it is 
either because \eqref{eq:c_dual} is infeasible (alternative $(ii)$ together with Proposition~\ref{prop:infeas}) or because 
the set of dual feasible slacks $\feasS$ is contained in $\stdCone \cap \{d\}^\perp$ (alternative $(i)$).\footnote{One must be careful that even if \eqref{eq:c_dual} is infeasible it might be the case that alternative $(ii)$ is not satisfied at this stage. This happens, for instance, if \eqref{eq:c_dual} is weakly infeasible.}
If alternative $(i)$ holds, since  $d \not \in \stdCone ^{\perp} $, 
we have
\begin{equation}
\stdCone \cap \{d\}^\perp \subsetneq \stdCone, \label{eq:fac_des}
\end{equation}
that is, the face $\stdFace_2 \coloneqq  \stdCone \cap \{d\}^\perp$  is properly contained in $\stdCone $. We then substitute 
$\stdCone$ for   $\stdFace _2$ and repeat.
As long as $(\reInt \stdFace _i)\cap (c+\matRange \stdMap^\T) = \emptyset $, we can find a new direction 
$d$.

We recall that if $\stdFace$ is a face of $\stdCone$, 
then $\stdFace \subsetneq \stdCone$ holds if and only if $\dim \stdFace < \dim \stdCone$. Therefore, \eqref{eq:fac_des} implies that after a 
finite number of iterations,  we will either find some face $\stdFace_\ell$ such that $ (\reInt \mathcal{F}_\ell )\cap (c + \matRange \stdMap) \neq \emptyset$ or 
we will eventually find out that the problem is infeasible. 

It turns out that $\mathcal{F}_\ell$ must be the smallest face $\minFaceD$ of $\stdCone$ which contains $\feasS$. 
This process is called 
\emph{facial reduction} and it aims at finding $\minFaceD$. 
%The direction $d$ will be 
%henceforth called a \emph{reducing direction}. 
If $\feasS = \emptyset$, we 
have $\minFaceD = \emptyset$ by convention. 
For the sake of preciseness, we will state the following definition.

\begin{definition}[Reducing directions]\label{def:red}
A \emph{reducing direction for \eqref{eq:c_dual}} is an element 
$d \in \stdCone^* \cap \ker \stdMap$ such that 
$\inProd{c}{d} \leq 0$. A \emph{reducing direction for \eqref{eq:c_primal}} is a pair $(f,y)$ such that $f \in \stdCone $, $f = -\stdMap^\T {y}$ (i.e., $f \in \matRange \stdMap^\T$) and $\inProd{b}{y} \geq 0$.

Next, $\{d_1,\ldots, d_{\ell}\}$ is said to be a \emph{sequence of reducing directions for \eqref{eq:c_dual}} if
\begin{align}
d_i & \in (\stdCone \cap \{d_1\}^\perp \cap \cdots \cap \{d_{i-1}\}^\perp)^* \cap \ker \stdMap \cap \{c\}^\perp, \text{ for } i = 1,\ldots, {\ell-1} \label{eq:frd_d:1}\\
d_{\ell} & \in (\stdCone \cap \{d_1\}^\perp \cap \cdots \cap \{d_{\ell-1}\}^\perp)^* \cap \ker \stdMap, \qquad \inProd{c}{d_{\ell}} \leq 0.\label{eq:frd_d:2}
\end{align}

Analogously, $\{(f_1,y_1),\ldots, (f_{\ell},y_{\ell})\}$ is said to be a \emph{sequence of reducing directions for \eqref{eq:c_primal}} if 
\begin{align}
f_i = -\stdMap^\T y_i,\quad y_i \in \{b\}^\perp,\quad & f_i  \in (\stdCone^* \cap \{f_1\}^\perp \cap \cdots \cap \{f_{i-1}\}^\perp)^*, \text{ for } i = 1,\ldots, {\ell-1} \label{eq:frd_p:1}\\
f_\ell = -\stdMap^\T y_\ell,\quad \inProd{b}{y_{\ell}} \geq 0,\quad & f_\ell  \in (\stdCone^* \cap \{f_1\}^\perp \cap \cdots \cap \{f_{\ell-1}\}^\perp)^*.\label{eq:frd_p:2}
\end{align}
\end{definition}
\begin{remark}
	Liu and Pataki introduced in \cite{LP15} the notion of \emph{facial reduction cone}, see Definition~2 therein. The $k$-th facial reduction cone 	of $\stdCone$ is given by 
	\[
	\mathrm{FR}_k(\stdCone) = \{(d_1,\ldots, d_{k})\mid d_1 \in \stdCone^*,d_i  \in (\stdCone \cap \{d_1\}^\perp \cap \cdots \cap \{d_{i-1}\}^\perp)^*, i=2,\ldots, k  \}.
	\]
With that, \eqref{eq:frd_d:1}, \eqref{eq:frd_d:2} and \eqref{eq:frd_p:1}, \eqref{eq:frd_p:2} imply that 
\[ 
(d_1,\ldots,d_{\ell}) \in\mathrm{FR}_\ell(\stdCone), \quad (f_1,\ldots,f_{\ell}) \in  \mathrm{FR}_\ell(\stdCone^*).
\]  	
\end{remark}

The minimal face ${\cal F}_{\min}^D$ containing the feasible region of \eqref{eq:c_dual}
also has the following well-known characterization, see 
for instance, Proposition 3.2.2 in \cite{pataki_handbook}.

\begin{proposition}[Characterizations of the minimal face]\label{prop:min_face}
	Let  $\stdFace$ be a face of $\stdCone $ containing $\feasS$.
	Suppose $\stdFace$ and $\feasS$ are both non-empty. Then the conditions 
	below are equivalent.
	\begin{enumerate}[$(i)$]
		\item $\feasS\cap \reInt \stdFace \neq \emptyset$.
		\item $\reInt \feasS \subseteq \reInt \stdFace$.
		\item $\stdFace = \minFaceD$.
	\end{enumerate}
\end{proposition}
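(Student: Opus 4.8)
The plan is to isolate one elementary fact about faces and then run the cycle $(i)\Rightarrow(iii)\Rightarrow(ii)\Rightarrow(i)$. Throughout I assume \eqref{eq:dual} is feasible, so that $\feasS\neq\emptyset$ and hence $\reInt\feasS\neq\emptyset$ (a nonempty convex set in $\Re^n$ has nonempty relative interior); I also use that $\minFaceD$, being the intersection of all faces of $\stdCone^*$ containing $\feasS$, is itself a face of $\stdCone^*$ containing $\feasS$ and is the smallest such one. The fact I want (call it $(\star)$) is: \emph{if $G$ is a face of $\stdCone^*$ and $C\subseteq\stdCone^*$ is convex with $G\cap\reInt C\neq\emptyset$, then $C\subseteq G$.} Its proof is short and I expect it to be where the small amount of real work lies: given $s_0\in G\cap\reInt C$ and any $c\in C$, there is $\alpha>1$ with $c':=\alpha s_0+(1-\alpha)c\in C\subseteq\stdCone^*$ (Theorem~6.4 of \cite{rockafellar}; item~$iv$ of Lemma~\ref{lemma:aux} holds verbatim for an arbitrary convex set in place of a cone). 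Then $s_0=\tfrac1\alpha c'+\tfrac{\alpha-1}{\alpha}c$ exhibits $s_0$ as a convex combination, with both coefficients strictly positive, of the two points $c',c\in\stdCone^*$; since $s_0$ lies in the face $G$ (in the form equivalent to the midpoint definition: both endpoints of a line segment in $\stdCone^*$ whose relative interior meets $G$ lie in $G$), both $c'$ and $c$ belong to $G$, and in particular $c\in G$. Hence $C\subseteq G$.

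Granting $(\star)$, the implications are quick. For $(i)\Rightarrow(iii)$: pick $s_0\in\feasS\cap\reInt\stdFace$; for every face $G$ of $\stdCone^*$ with $\feasS\subseteq G$ we have $s_0\in G\cap\reInt\stdFace$, so $(\star)$ (with $C=\stdFace$) gives $\stdFace\subseteq G$, and in particular $\stdFace\subseteq\minFaceD$; since $\stdFace$ is itself a face of $\stdCone^*$ containing $\feasS$, minimality of $\minFaceD$ gives $\minFaceD\subseteq\stdFace$, whence $\stdFace=\minFaceD$. For $(iii)\Rightarrow(ii)$: assume $\stdFace=\minFaceD$ and take any $s\in\reInt\feasS$; if $s\notin\reInt\minFaceD$, then $s$ lies on the relative boundary of the convex set $\minFaceD$, hence on a supporting hyperplane $H$ of $\minFaceD$ inside $\affineHull\minFaceD$ with $\minFaceD\not\subseteq H$, so $G:=\minFaceD\cap H$ is a proper face of $\minFaceD$ containing $s$, and (faces of faces are faces) $G$ is a face of $\stdCone^*$. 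Then $s\in G\cap\reInt\feasS$, so $(\star)$ (with $C=\feasS$) gives $\feasS\subseteq G\subsetneq\minFaceD$, contradicting minimality of $\minFaceD$. Therefore $s\in\reInt\minFaceD$; since $s\in\reInt\feasS$ was arbitrary, $\reInt\feasS\subseteq\reInt\minFaceD=\reInt\stdFace$. For $(ii)\Rightarrow(i)$: $\reInt\feasS\neq\emptyset$, and any $s\in\reInt\feasS$ satisfies $s\in\reInt\stdFace$ by hypothesis and $s\in\feasS$, so $\feasS\cap\reInt\stdFace\neq\emptyset$.

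The single nontrivial ingredient is $(\star)$ — a face of $\stdCone^*$ meeting the relative interior of a convex subset of $\stdCone^*$ must contain that subset — supported only by the standard facts that faces of faces are faces and that a point of a convex set not in its relative interior lies on a proper exposed face. I do not anticipate any serious obstacle beyond this bookkeeping, apart from the harmless caveat that the statement tacitly presupposes $\feasS\neq\emptyset$ (if $\feasS=\emptyset$ one sets $\minFaceD=\emptyset$ and the three conditions are no longer equivalent for a nonempty $\stdFace$, so feasibility of \eqref{eq:dual} must be in force).
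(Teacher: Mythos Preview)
Your argument is correct. The key lemma $(\star)$ is exactly the standard ``accessibility'' property of faces, and the cycle $(i)\Rightarrow(iii)\Rightarrow(ii)\Rightarrow(i)$ goes through as you describe; the auxiliary facts you invoke (the line-segment extension of the midpoint face definition, faces of faces are faces, relative-boundary points lie on proper exposed faces) are all standard and used appropriately. Your caveat about the tacit assumption $\feasS\neq\emptyset$ is also well taken.

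As for comparison with the paper: there is nothing to compare. The paper does not prove Proposition~\ref{prop:min_face}; it simply records it as a well-known characterization and refers the reader to Proposition~3.2.2 of \cite{pataki_handbook}. Your write-up therefore supplies a self-contained proof where the paper gives only a citation. The route you take (isolate $(\star)$, then cycle through the implications) is essentially the textbook one and is in the same spirit as Pataki's treatment, so there is no meaningful methodological divergence to highlight.
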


%Facial reduction is a very powerful procedure and it 
%can be used to solve feasibility problems over arbitrary closed convex cones. 
The computationally challenging part of facial reduction is computing $d$ which requires, in general, solving another CLP. At first glance, it seems that we are again stuck solving an CLP that might also not be strongly feasible. However, \emph{even if the original CLP is not strongly feasible, 
searching for $d$ can always be done by solving problems that
are primal and dual strongly feasible}.
In particular, when $\stdCone = \PSDcone{n}$, finding reducing directions can be done with
$\intOracle$. 

\begin{lemma}[Finding a reducing direction through strongly feasible auxiliary problems]\label{lemma:red_sdp}
Let $\stdInt \in \reInt \stdCone$, $\stdInt^* \in \reInt \stdCone^*$
and consider the following pair of primal and dual problems.
	\begin{align}
	\underset{x,t,w}{\inf} & \quad t & \tag{$P_\stdCone$}\label{eq:red}  \\ 
	\mbox{subject to} & \quad -\inProd{c}{x  -t\stdInt^*} + t - w &= 0 \label{eq:red:1}\\
	&\quad \inProd{\stdInt}{x} + w &= 1 \label{eq:red:2}\\
	& \quad\stdMap x  -t\stdMap \stdInt^* & = 0 \label{eq:red:3}\\
	&\quad (x,t,w) \in \stdCone ^* \times \Re_+ \times \Re_+ \nonumber
	\end{align}	
	\begin{align}
	\underset{y_1,y_2,y_3}{\sup} & \quad  y_2 & \tag{$D_\stdCone $}\label{eq:red_dual}  \\ 
	\mbox{subject to} & \quad cy_1 -\stdInt y_2 -\stdMap^\T y_3 \in \stdCone \label{eq:red_dual:1} \\
	&\quad 1-y_1(1+\inProd{c}{\stdInt^*}) + \inProd{\stdInt^* }{\stdMap ^\T y_3}\geq 0 \label{eq:red_dual:2} \\
	& \quad y_1-y_2 \geq 0 \label{eq:red_dual:3}
	\end{align}
	
	The following properties hold.
	\begin{enumerate}[$(i)$]
		\item Both \eqref{eq:red} and \eqref{eq:red_dual} are strongly feasible.
	\end{enumerate}
	Let $(x^*,t^*,w^*)$ be an optimal solution to \eqref{eq:red} and 
	$(y_1^*,y_2^*,y_3^*)$ be  an optimal solution to \eqref{eq:red_dual}.

	\begin{enumerate}[$(i)$]
		\setcounter{enumi}{1} 
		\item The primal optimal value $\opt{\text{\ref{eq:red}}}$ is zero if and only if
		$\minFaceD \subsetneq \stdCone$. In this case, one of the two alternatives below must hold:
		\begin{enumerate}
			\item $\inProd{c}{x^*} < 0 $ and $(c+\matRange \stdMap^\T)\cap \stdCone = \emptyset$ (i.e., \eqref{eq:c_dual} is infeasible), or
			\item $\inProd{c}{x^*} = 0 $ and $(c+\matRange \stdMap^\T)\cap \stdCone \subseteq  \stdCone \cap \{x^*\}^\perp \subsetneq \stdCone$. 
		\end{enumerate}
		
		\item The primal optimal value $\opt{\text{\ref{eq:red}}}$ is positive 
		if and only if \eqref{eq:c_dual} is strongly feasible, i.e., $\minFaceD = \stdCone$. In this case, we have
		\[
		c  -\stdMap^\T \frac{y_3^*}{y_1^*} \in \reInt \stdCone. 
		\]
	\end{enumerate}
	
\end{lemma}
\begin{proof}
	\begin{enumerate}[$(i)$]
		\item Let 
		\[
		t \coloneqq \frac{1}{\inProd{\stdInt}{\stdInt^*}+1},\quad w \coloneqq \frac{1}{\inProd{\stdInt}{\stdInt^*}+1},\quad x \coloneqq \frac{\stdInt^*}{\inProd{\stdInt}{\stdInt^*}+1}.
		\]
		Then $(x,t,w) $ is a strongly feasible solution to \eqref{eq:red}, i.e., 
		\[
		(x,t,w) \in \reInt (\stdCone \times \Re_+ \times \Re_+) = \reInt\stdCone \times \reInt \Re_+ \times \reInt \Re_+.
		\]
		
		Next, we observe that $(y_1,y_2,y_3) \coloneqq (0,-1,0)$ is a feasible solution to \eqref{eq:red_dual} such that \eqref{eq:red_dual:2}, \eqref{eq:red_dual:3} are satisfied strictly and 
		\[
		 cy_1 -\stdInt y_2 -\stdMap^\T y_3 = e \in \reInt \stdCone.
		\]
		We have thus shown that both \eqref{eq:c_primal} and \eqref{eq:c_dual} are strongly feasible.
		\item First, let $(x^*,t^*,w^*)$ be an optimal solution to \eqref{eq:c_primal} and 
		suppose that $\opt{\text{\ref{eq:red}}}$  is zero.
	We have 
		$t^* = 0$. Then, 	\eqref{eq:red:1} and \eqref{eq:red:3} together with $x^* \in \stdCone^*$ and $w^* \geq 0$ imply that 
		\begin{equation}
		x^* \in \ker \stdMap \cap \stdCone^*, \quad \inProd{c}{x^*} \leq 0. \label{eq:aux1}
		\end{equation}
		Then, we have two possibilities.
		\begin{enumerate}
			\item Suppose $\inProd{c}{x^*} < 0 $. We will show that \eqref{eq:c_dual} must be infeasible. Let $s \in (c+\matRange \stdMap^\T)$, then \eqref{eq:aux1} implies $\inProd{s}{x^*} < 0$. Since $x^* \in \stdCone^*$, we conclude that 
			$s$ cannot belong to $\stdCone$, because otherwise we would have $\inProd{x^*}{s} \geq 0$.
			
			Therefore,  \eqref{eq:c_dual} must be infeasible and $(c+\matRange \stdMap^\T)\cap \stdCone = \emptyset$. In this case, we have $\minFaceD = \emptyset$ and, indeed, $\minFaceD \subsetneq \stdCone$.
			
			\item Suppose $\inProd{c}{x^*} = 0 $.  This, together with \eqref{eq:aux1} implies that
			\[
			(c+\matRange \stdMap^\T)\cap \stdCone = \feasS \subseteq  \stdCone \cap \{x^*\}^\perp. 
			\]
			Next, we will check that the inclusion $\stdCone \cap \{x^*\}^\perp \subsetneq \stdCone$ is indeed proper.  We observe that since $t^* = 0$ and 
			$\inProd{c}{x^*} = 0 $, \eqref{eq:red:1} implies that $w^* = 0$ too. 
			Therefore, \eqref{eq:red:2} implies that $\inProd{\stdInt}{x^*} = 1$.
			In particular, $x^*$ does not belong to $\stdCone^\perp$. In other words, 
			\[
			\stdCone \cap \{x^*\}^\perp \subsetneq \stdCone.
			\]
			Since $\minFaceD \subseteq \stdCone \cap \{x^*\}^\perp $, we also have 
			$\minFaceD \subsetneq \stdCone$.
		\end{enumerate}
		
		Now, we will prove the converse. That is, we will suppose that 
		$\minFaceD \subsetneq \stdCone$ and we will show that  $\opt{\text{\ref{eq:red}}} = 0$. We start by observing that since the objective function of \eqref{eq:red} is ``$t$'' and 
		$t$ is constrained to be nonnegative, if we exhibit a feasible solution 
		for \eqref{eq:red} having $t = 0$ this would be enough to show that 
		$\opt{\text{\ref{eq:red}}} = 0$.

		Since $\minFaceD \subsetneq \stdCone$, \eqref{eq:c_dual} is not strongly feasible.
		By Lemma~\ref{lem:fr}, there exists some $x \in \stdCone^* \cap \ker \stdMap$ such that
		either $(a)$ $\inProd{c}{x} = 0$ and $x \not \in \stdCone^\perp$ or $(b)$ $\inProd{c}{x} < 0$. 		
		 Let us check each case.
		\begin{enumerate}
			\item Suppose 			$\inProd{c}{x} = 0$ and $x \not \in \stdCone^\perp$.
			Then the condition $x \not\in \stdCone^\perp$ implies that $\inProd{\stdInt}{x} > 0$, by item $(\ref{laux:ri2})$ of Lemma~\ref{lemma:aux}.
			Then, 
			\[
			\left(\frac{x}{\inProd{\stdInt}{x}},0,0\right)
			\]
			is a feasible  solution for \eqref{eq:red}, which shows 
			that $\opt{\text{\ref{eq:red}}} = 0$.

		\item Suppose that $\inProd{c}{x} < 0$. We define
		\[\alpha \coloneqq \frac{1}{\inProd{\stdInt}{x} - \inProd{c}{x}}
		\]  and this is well-defined 
		because $- \inProd{c}{x} > 0$ and $\inProd{\stdInt}{x} \geq 0$. Then $(x\alpha,0,-\alpha \inProd{c}{x})$ is a feasible solution to \eqref{eq:red}, which also shows 
		that  $\opt{\text{\ref{eq:red}}} = 0$. 
		
	\end{enumerate}
		\item Since \eqref{eq:red} and \eqref{eq:red_dual} are both strongly feasible, we have, in particular, that $\opt{\text{\ref{eq:red}}} = \opt{\text{\ref{eq:red_dual}}}$ and there is an optimal solution to \eqref{eq:red_dual} $(y_1^*,y_2^*,y_3^*)$ satisfying 
		$y_2^* = \opt{\text{\ref{eq:red}}}$. 
		By item $(ii)$, we have that $\opt{\text{\ref{eq:red}}} = 0$ if and only if $\minFaceD \subsetneq \stdCone$, which happens if and only if \eqref{eq:c_dual} is \emph{not} strongly feasible, by Proposition~\ref{prop:min_face}.
		As $\opt{\text{\ref{eq:red}}} $ is always nonnegative (because $t$ is constrained to be nonnegative), we conclude that \eqref{eq:c_dual} is strongly feasible if and only if $\opt{\text{\ref{eq:red}}} $ is positive.

		Next, suppose that $\opt{\text{\ref{eq:red}}}$ is indeed positive. 
		In this case we have that  $y_2^* = \opt{\text{\ref{eq:red}}}$ is positive and that 
	    \[
	    \stdInt y_2^* \in \reInt \stdCone,
	    \] 
	    since $\stdInt \in \reInt \stdCone$.
		This fact, together with \eqref{eq:red_dual:1} and item $(\ref{laux:ri})$ of Lemma~\ref{lemma:aux}, implies that 
		\[ cy_1^*  -\stdMap^\T  y_3^* \in \reInt \stdCone. \]		
		To conclude, we observe that \eqref{eq:red_dual:3} implies that $y_1^* \geq y_2^* > 0$. Therefore, 
		\[c  -\stdMap^\T  \frac{y_3^*}{y_1^*} \in \reInt \stdCone\]
	   Using Proposition \ref{prop:min_face}, we conclude that indeed $\minFaceD = \stdCone$.
		
	\end{enumerate}
\end{proof}
\begin{remark}
	Lemma~\ref{lemma:red_sdp} holds for any pair of $\stdInt, \stdInt^*$ satisfying $\stdInt \in \reInt \stdCone, \stdInt^* \in \reInt \stdCone^*$.
	When $\stdCone = \PSDcone{n}$, we may take $\stdInt$ and $\stdInt^*$ to be, for example, both equal to the $n\times n$ identity matrix. If $\stdCone$ is some face of $\PSDcone{n}$,
	we can use Proposition \ref{prop:psdface} together  with \eqref{eq:face} and \eqref{eq:face_dual} to find  $\stdInt$ and $\stdInt^*$ as follows. We take $\stdInt = \stdInt^*$ and let 
	$\stdInt$ be such that $Q^\Tr \stdInt Q = \bigl(\begin{smallmatrix} I_r & 0 \\ 0 & 0 \end{smallmatrix} \bigr)$, where 
	$I_r$ is the $r\times r$ identity matrix. 	

	For SDPs, we note that Cheung, Schurr and Wolkowicz  also 
discuss an auxiliary problem that is primal and dual strongly feasible, see the problem $(AP)$ in \cite{csw13}. A key difference is that 
$(AP)$ requires an additional second order cone constraint, whereas \eqref{eq:red} and \eqref{eq:red_dual} only use linear equalities/inequalities and cone constraints involving the original cone $\stdCone$ and its dual.
	
\end{remark}

With the aid of Lemma~\ref{lemma:red_sdp}, we now are able to state a 
facial reduction algorithm that can be easily adapted to use the oracle $\intOracle$, when $\stdCone = \PSDcone{n}$, see Algorithm~\ref{alg:fra}.

\begin{algorithm}[h]
	\caption{Facial reduction with strongly feasible auxiliary problems}\label{alg:fra}
	\DontPrintSemicolon
	\SetKwInOut{Input}{Input}\SetKwInOut{Output}{Output}
	\Input{$\stdCone, \stdMap, c$ 
	}
	\Output{Reducing directions $d_1,\ldots, d_{\ell}$ for \eqref{eq:c_dual} (Definition~\ref{def:red}) together with \texttt{Feasible} or \texttt{Infeasible}. If \texttt{Feasible}, a pair $(s,y)$ is also returned so that
	\begin{align*}
	s = c-\stdMap^\T y & \in \reInt (\stdCone \cap \{d_1\}^\perp \cap \cdots \cap \{d_\ell\}^\perp ).
	\end{align*}
	}
$\stdFace_1 \leftarrow \stdCone$, $i \leftarrow 1$.

Replace $\stdCone,\stdCone^*$ by $\stdFace_i, \stdFace_i^*$ in \eqref{eq:red_dual} and \eqref{eq:red}, respectively, and 
solve the resulting pair of problems (associated with the cones $\stdFace_i, \stdFace_i^*$). 
Denote the obtained   optimal solutions by $(x^*,t^*,w^*)$ and $(y_1^*,y_2^*,y_3^*)$.\label{ln:or}%TODO: 

\eIf{$t^* = 0$}{ 
	$d_{i} \leftarrow x^*$\tcc*{Found a reducing direction}
	
	\eIf{$\inProd{c}{x^*} < 0$}{ \label{alg:fra:red1}
		$\minFaceD \leftarrow \emptyset$ \label{alg:fra:inf} \tcc*{$\inProd{c}{x^*} < 0$ attests that \eqref{eq:c_dual} is infeasible}
		
		\Return{} \texttt{Infeasible}, $\minFaceD,d_1,\ldots, d_i$
	}
	{
		$\stdFace _{i+1} \leftarrow \stdFace_i \cap \{d_i\}^\perp$ \label{alg:fra:red2} \tcc*{In this case we have $\inProd{c}{x^*} = 0$ }
		
		 $i \leftarrow i+1$	
		 
		\textbf{go to } line \ref{ln:or}}
}
{
	 $\minFaceD \leftarrow \stdFace_i$, \label{alg:fra:min}\tcc*{Found the minimal face }
	 
	 $s \leftarrow  c  -\stdMap^\T  \frac{y_3^*}{y_1^*}$ \tcc*{$s \in \reInt \stdFace_i$ } \label{alg:fra:min2}
	 
	 \Return{} \texttt{Feasible}, $\minFaceD,d_1,\ldots, d_i,\left(s,\frac{y_3^*}{y_1^*}\right)$ 
}
\end{algorithm} 
\noindent
\begin{proposition}[Algorithm~\ref{alg:fra} is correct]\label{prop:fra_cor}
Algorithm~\ref{alg:fra} correctly detects whether \eqref{eq:c_dual} is feasible or not.
If \eqref{eq:c_dual} is feasible, Algorithm~\ref{alg:fra} correctly identifies the minimal face $\minFaceD$ and the pair $(s,y)$ returned by Algorithm~\ref{alg:fra} does indeed satisfy
\[
s = c- \stdMap^\T y \in \reInt \minFaceD.
\]
\end{proposition}
\begin{proof}
The correctness of Algorithm~\ref{alg:fra} is a consequence of Lemma~\ref{lemma:red_sdp} and we will now explain some of the details. We have several claims.

\noindent\fbox{\textbf{Claim 1} For all $i$, $\stdFace_i$ contains $(c+\matRange \stdMap^\T)\cap \stdCone$ and $\stdFace_{i+1}$ is strictly contained in $\stdFace_i$}

\noindent This claim holds by induction.
When Algorithm~\ref{alg:fra} starts, we have $\stdFace _{1} = \stdCone$.
Now, suppose that for some $i$ we have that $\stdFace _i$ contains $(c+\matRange \stdMap^\T)\cap \stdCone$. Given 
$\stdFace _i$, we have that $\stdFace _{i+1}$ is constructed by the relation
\[
\stdFace _{i+1} = \stdFace _{i} \cap \{d_i\}^\perp.
\]
However, $\stdFace _{i+1}$ is only computed if the optimal value of \eqref{eq:red} is $0$ and $\inProd{c}{x^*} = 0$, see Lines~\ref{alg:fra:red1} and \ref{alg:fra:red2}.
In this case, item $(ii)(b)$ of Lemma~\ref{lemma:red_sdp} ensures
\begin{equation}
(c+\matRange \stdMap^\T)\cap \stdFace _i \subseteq \stdFace _{i+1} \subsetneq \stdFace _i.\label{eq:fra_cor}
\end{equation}
Since $\stdFace _i$ is a face (and, therefore, a subset) of $\stdCone$, 
the hypothesis that  $\stdFace _i$ contains $(c+\matRange \stdMap^\T)\cap \stdCone$
implies that, in fact,
\[(c+\matRange \stdMap^\T)\cap \stdFace _i = (c+\matRange \stdMap^\T)\cap \stdCone,
\]
which, combined with \eqref{eq:fra_cor}, implies that $\stdFace _{i+1}$ must 
also contain $(c+\matRange \stdMap^\T)\cap \stdCone$. 
This concludes the proof of \textbf{Claim 1}.

\noindent\fbox{\textbf{Claim 2} The minimal face of $\stdFace _i$ containing $(c+\matRange \stdMap^\T)\cap \stdCone$ coincides with $\minFaceD$ }

\textbf{Claim 2} follows from \textbf{Claim 1} and the fact that if $\stdFace$ is a face of $\stdCone$ and 
$\hat \stdFace$ is a face of $\stdFace$, then $\hat \stdFace$ is a face of $\stdCone$.

\noindent\fbox{\textbf{Claim 3} For all $i$, $(c+\matRange \stdMap^\T)\cap \stdFace _i = \emptyset$ holds if and only if \eqref{eq:c_dual} is infeasible  }
\textbf{Claim 3} is a consequence of \textbf{Claim 1}.

Now, \textbf{Claim 1} implies that whenever a new face $\stdFace_{i+1}$ is computed, it must be strictly smaller than $\stdFace _{i}$ and, therefore, the dimension must also be strictly smaller\footnote{The fact that $\stdFace _i$ and $\stdFace_{{i+1}}$ are faces is important, because, in general, $C_1 \subsetneq C_2$ does not imply $\dim C_1 < \dim C_2$.}. Since we cannot have an infinite strictly descending of faces, at some point, the optimal value of \eqref{eq:red} must become positive or a certificate that $(c+\matRange \stdMap^\T)\cap \stdFace_i = \emptyset$ will be found (see Lines~\ref{alg:fra:red1} and \ref{alg:fra:inf}). In the first case, \textbf{Claim 2} together with item $(iii)$ of Lemma~\ref{lemma:red_sdp} (applied to $\stdFace_i$) implies that $\minFaceD = \stdFace _i$ and that 
\[s = c- \stdMap^\T y \in \reInt \minFaceD,\]
where $y = y_3^*/y_1^*$.
 In the second case, \textbf{Claim 3} and item $(ii)(a)$ of Lemma~\ref{lemma:red_sdp} ensures that, indeed, \eqref{eq:c_dual} must be infeasible. 
\end{proof}

Next, we examine the computational cost of Algorithm~\ref{alg:fra}, following an analysis 
similar to other facial reduction approaches (e.g, \cite{pataki_strong_2013,article_waki_muramatsu}).
When Algorithm~\ref{alg:fra} is invoked, a chain of faces of $\stdCone$ is constructed as follows
\[
\stdCone =\stdFace _1 \supsetneq \cdots \supsetneq \stdFace _{\ell}.
\]
We recall that if $\stdFace, \hat \stdFace$ are faces of $\stdCone$ such that 
$\stdFace \subseteq \hat \stdFace$, then $\stdFace \neq \hat \stdFace$ if and only if 
$\dim \stdFace < \dim \hat \stdFace$. As $\stdCone$ is finite dimensional, we conclude 
that at most $\dim \stdCone + 1$ faces will be found when Algorithm~\ref{alg:fra} is invoked. This estimate can be sharpened in several different ways. 
For example, let $\ell _{\stdCone}$ denote the \emph{longest chain of strictly decreasing non-empty faces of $\stdCone$}. Then, the number of non-empty faces that will be found when Algorithm~\ref{alg:fra} is invoked is bounded above by $\ell _{\stdCone}$.
In particular, when $\stdCone= \PSDcone{n}$, we have
\[
\dim \stdCone = \frac{n(n+1)}{2}, \quad \ell _{\PSDcone{n}} = n+1.
\]
This shows that, in some cases, $\ell _{\stdCone}$ can be a much better bound 
than $\dim \stdCone$. For a proof that  $\ell _{\PSDcone{n}} = n+1$ see, for 
example, Theorem~14 in \cite{IL17} where it is shown that that  whenever $\stdCone$ is a symmetric cone (homogeneous self-dual cone), we have $\ell _{\stdCone} = \matRank \stdCone + 1$, where $\matRank\stdCone$ is the Jordan algebraic rank of $\stdCone$.
We summarize this discussion in the next proposition.
\begin{proposition}[Computational cost of Algorithm~\ref{alg:fra}]\label{prop:fra_cost}
The number of times that Algorithm~\ref{alg:fra}	solves the pair \eqref{eq:red} and 
\eqref{eq:red_dual} is bounded above by $\ell _\stdCone$.
In particular, when $\stdCone = \PSDcone{n}$, Algorithm~\ref{alg:fra} can be implemented by invoking $\intOracle$ at most $n+1$ times.

%\[
%\begin{cases}
%\min (\ell _\stdCone-1, \dim (\ker \stdMap \cap \{c\}^{\perp})+1) & 
%\text{ if \eqref{eq:c_dual} is feasible}\\
%\min (\ell _\stdCone, \dim (\ker \stdMap \cap \{c\}^{\perp})+1) & 
%\text{ if \eqref{eq:c_dual} is infeasible}
%\end{cases}
%\]	
\end{proposition}
\begin{proof}
In the proof of Proposition~\ref{prop:fra_cor}, we have shown that Algorithm~\ref{alg:fra} constructs a strictly nondecreasing chain of faces as follows
\begin{equation}\label{eq:chain}
\stdCone =\stdFace _1 \supsetneq \cdots \supsetneq \stdFace _{\ell}.
\end{equation}
We divide the proof in two cases.
Suppose first that \eqref{eq:c_dual} is feasible. Then, 
$\stdFace _{\ell} = \minFaceD$ by Proposition~\ref{prop:fra_cor} and $\minFaceD$ is not empty. Finding a new face $\stdFace _i$ in Algorithm~\ref{alg:fra} corresponds to solving the pair \eqref{eq:red} and \eqref{eq:red_dual} once. So, after solving the pair \eqref{eq:red} and \eqref{eq:red_dual} at most $\ell _{\stdCone}-1$ times, Algorithm~\ref{alg:fra} will set $\stdFace _{\ell}$ to $\minFaceD$. 
Then, \eqref{eq:red} and \eqref{eq:red_dual} will be solved one 
extra time in order to check that $\stdFace _{\ell}$ is indeed the minimal face and to obtain $s \in \reInt \stdCone$, as in Lines~\ref{alg:fra:min} and \ref{alg:fra:min2}. In total, \eqref{eq:red} and \eqref{eq:red_dual} is solved at most $\ell _{\stdCone}$ times.

% Next, we observe that 
%each face $\stdFace _{i+1}$ for $i > 0$ is constructed by considering an 
%intersection of the form 
%\[
%\stdFace _{i+1} = \stdFace _{i}\cap \{d_{i}\}^\perp = \stdCone \cap \{d_1\}^\perp \cap \dots \{d_{i-1}\}^\perp\cap \{d_i\}^\perp.
%\]
%If $d_i$ belongs to the linear span of $\{d_1, \ldots, d_{i-1}\}$, we would have that $\stdFace _{i+1} = \stdFace _i$, which is not allowed because all the containments in \eqref{eq:chain} are strict.
%Since all the $d_i$ belong to $\ker \stdMap \cap \{c\}^{\perp}$ and the number of directions is $\ell-1$, we also have
%\[
%\ell-1 \leq \dim (\ker \stdMap \cap \{c\}^{\perp}).
%\]

Next, suppose that \eqref{eq:c_dual} is infeasible.
In this case, the last face 
$\stdFace_{\ell}$ will be empty (see Line~\ref{alg:fra:inf}), but all faces up to $\ell-1$ will be 
nonempty. Therefore, $\ell-1 \leq \ell_\stdCone$. As in the previous case, each face in the chain \eqref{eq:chain} corresponds to solving the pair  \eqref{eq:red} and \eqref{eq:red_dual} once. 
In summary, after solving $\eqref{eq:red}$ and $\eqref{eq:red_dual}$ at most $\ell_\stdCone-1$ times, Algorithm~\ref{alg:fra} will find the last nonempty face $\stdFace _{\ell-1}$ and, then, $\eqref{eq:red}$ and $\eqref{eq:red_dual}$ will be solved once more in order to set 
$\stdFace _{\ell}$ to ``$\emptyset$''.

To conclude, we suppose that $\stdCone = \PSDcone{n}$. 
Then, Algorithm~\ref{alg:fra} successively solves the problem \eqref{eq:red} and \eqref{eq:red_dual} over $\PSDcone{n}$ and its faces at most $\ell_{\PSDcone{n}}=n+1$ times. 
By Lemma~\ref{lemma:red_sdp} and Proposition~\ref{prop:refor}, these are strongly feasible problems 
that can be solved by invoking $\intOracle$ a single time.
\end{proof}

We mention in passing that the minimal number of facial reduction steps needed to find the minimal face of \eqref{eq:c_dual} is often called the \emph{singularity degree} of \eqref{eq:c_dual}. The singularity degree is also bounded by  $\ell _{\stdCone}$, but sharper estimates can be obtained by considering facial reduction strategies that take into account the existence of polyhedral faces of $\stdCone$ as in the case of the FRA-Poly algorithm in \cite{LMT18}. 

To conclude this section, we quickly review some  variants of facial reduction.  
The search for efficient ways of doing facial reduction and computing the reducing directions is an 
area of active research.
Permenter, Friberg and Andersen have recently shown that reducing directions can be obtained naturally if we have access to relative interior solutions to a certain self-dual homogeneous model of \eqref{eq:c_primal} and \eqref{eq:c_dual}, see Theorem~3.2 and Section~4 of \cite{PFA17}.

It is also possible to relax the search criteria in order 
to make the problem of finding $d$ more tractable by considering, for example, polyhedral approximations as in the Partial Facial Reduction approach of Permenter and Parrilo \cite{PP14}
or relaxing the definition of reducing direction as in the approach by Friberg \cite{Fr16} by removing the conic constraints.
See also the work of Zhu, Pataki and Tran-Dinh for a heuristic facial reduction algorithm for SDPs in primal standard format \cite{ZPT17}.
In the case of \cite{PP14} and \cite{ZPT17}, the drawback is that facial reduction might end with a face other than $\minFaceD$, although their experiments show that many interesting instances become easier to solve nonetheless. For the approach in \cite{Fr16}, there are some representability issues affecting the cones obtained by intersecting $\stdCone$ with the hyperplanes defined by the reducing directions, see Sections~4 and 6 therein.
In \cite{LMT18}, we proposed ``FRA-Poly'', a two-phase facial reduction algorithm  that takes into consideration the presence of polyhedral faces in the face lattice of $\stdCone$. Instead of performing facial reduction until Slater's condition is satisfied, Phase~1 of the algorithm in \cite{LMT18} regularizes the problem until the so-called \emph{partial polyhedral Slater's condition} is satisfied. Then, in Phase~2, the algorithm jumps directly to the minimal face. An extension of Lemma~\ref{lemma:red_sdp} appropriate
	for FRA-Poly is proved in Lemma~3 of \cite{LMT18}.

%To conclude this section, we emphasize that what sets Algorithm~\ref{alg:fra} apart 
%from other facial reduction approaches in the literature is 
%that all auxiliary problems that must be solved 
%are always primal and dual strongly feasible 
%and that only linear constraints are added, 
%in addition to the conic constraints originating from the original problem. 
%In particular,  we never need to, for example, add a quadratic constraint in order to build the auxiliary problems  \eqref{eq:red} and \eqref{eq:red_dual}.

\section{Double facial reduction}\label{sec:dfr}
In Section~\ref{sec:fr}, we saw how to perform facial reduction by solving auxiliary problems that are always primal and dual strongly feasible. However, as we remarked previously, 
facial reduction only guarantees that one side of the problem will be strongly feasible, after reformulating the problem over the minimal face. In order to finally obtain a problem where both the primal and dual are strongly feasible, we only need to do 
facial reduction \emph{twice}, which is mildly surprising. 
We call this \emph{double facial reduction}.

In this section, we discuss technical aspects related to double facial reduction and how it can be used to 
 compute the optimal value of \eqref{eq:c_dual}. 
 Double facial reduction will also enable us to compute almost optimal solutions when the optimal value of \eqref{eq:c_dual} is not attained, as we will see in Section~\ref{sec:almost}.
We will also show how to obtain almost feasible solutions when \eqref{eq:c_dual} is weakly infeasible, see Section~\ref{sec:inf}.

\subsection{Computing the optimal value of \eqref{eq:c_dual}}\label{sec:dfr_opt}
The first step towards computing the optimal value $\dOpt$ of \eqref{eq:c_dual} is to apply facial reduction to \eqref{eq:c_dual}.  Then, if \eqref{eq:c_dual} is feasible,
% If \eqref{eq:c_dual} turns out to be infeasible, we are done and all that remains is to distinguish between weak and strong infeasibility of the problem, as we shall discuss in Section~\ref{sec:inf}.  
%So, for this subsection, we suppose that \eqref{eq:c_dual} is feasible and  after applying facial reduction to \eqref{eq:c_dual} 
we obtain the following pair of CLPs:

 \noindent{[\bf Primal-dual pair obtained after applying facial reduction to \eqref{eq:c_dual}]}%
\begin{multicols}{2}
	\noindent\begin{align}
	\underset{x}{\inf} & \quad \inProd{c}{x} \label{eq:fr_primal}\tag{\^{P}}\\ 
	\mbox{subject to} & \quad \stdMap x = b \nonumber \\ 
	&\quad x \in (\minFaceD)^* \nonumber
	\end{align}
	\noindent
	\begin{align}
	\underset{y}{\sup} & \quad \inProd{b}{y} \label{eq:fr_dual} \tag{\^{D}} \\ 
	\mbox{subject to} & \quad c - \stdMap ^\T y \in \minFaceD. \nonumber
	\end{align}	
\end{multicols}
Here, \eqref{eq:fr_dual} is strongly feasible, but it could 
still be the case that \eqref{eq:fr_primal} is not strongly feasible. Therefore, when $\stdCone = \PSDcone{n}$, the pair 
\eqref{eq:fr_primal} and \eqref{eq:fr_dual} might still not be 
solvable with $\intOracle$.
To remedy this issue, if $\minFaceD \neq \emptyset$, it is reasonable to consider applying facial reduction 
to \eqref{eq:fr_primal}, which leads to the following pair of problems.

\noindent{\bf [Primal-dual pair obtained after applying facial reduction to \eqref{eq:fr_primal}]}\noindent
\begin{multicols}{2}
	\noindent\begin{align}
	\underset{x}{\inf} & \quad \inProd{c}{x} \label{eq:dfr_primal}\tag{{P}*}\\ 
	\mbox{subject to} & \quad \stdMap x = b \nonumber \\ 
	&\quad x \in \minFace{\text{\ref{eq:fr_primal}}} \nonumber
	\end{align}%
	\noindent
	\begin{align}%
	\underset{y}{\sup} & \quad \inProd{b}{y} \label{eq:dfr_dual} \tag{D*}\\ 
	\mbox{subject to} & \quad c - \stdMap ^\T y \in (\minFace{\text{\ref{eq:fr_primal}}})^*.\nonumber
	\end{align}	
\end{multicols}
Here, $\minFace{\text{\ref{eq:fr_primal}}}$ is the minimal face of $(\minFaceD)^*$ which contains 
the feasible region of \eqref{eq:fr_primal}. Now, 
if $\minFaceD$ and  $\minFace{\text{\ref{eq:fr_primal}}}$ are non-empty, both \eqref{eq:dfr_primal} and \eqref{eq:fr_dual} are  ensured to be strongly feasible. 
However, it is not obvious at all whether \eqref{eq:dfr_dual} still satisfies strong feasibility, since $\minFaceD \subseteq (\minFace{\text{\ref{eq:fr_primal}}})^*$.
After all, $C_1 \subseteq C_2$ does not imply $\reInt C_1 \subseteq \reInt C_2$ in general.

Nevertheless, we will show in this section that, in fact, if \eqref{eq:c_dual} and 
\eqref{eq:c_primal} are both feasible, then \eqref{eq:dfr_dual} will still be strongly feasible.
In addition, if $\minFace{\text{\ref{eq:fr_primal}}}$ is empty, then it is because 
$\dOpt = +\infty$.

In essence, the question boils down to understanding the possible ways 
that the feasibility properties of \eqref{eq:c_dual} might change when 
a single facial reduction step is performed at \eqref{eq:c_primal} and 
$\stdCone$ is replaced by $(\stdCone^*\cap \{f\}^\perp)^*$ in \eqref{eq:c_dual}, for some $f \in \stdCone \cap \matRange \stdMap^\T$. First, we need a few auxiliary results.

\begin{lemma}\label{lemma:reint}
	Let $u \in \reInt \stdCone$, $d \in \stdCone$ and $v \in \tanSpace{d}{\stdCone}$, where $\tanSpace{d}{\stdCone}$ is the tangent space of $\stdCone$ at $d$.
	Then, there is $t > 0$ such that
	 \[u + v + td \in \reInt \stdCone.\]
\end{lemma}  
The intuition for Lemma~\ref{lemma:reint} is as follows. If $v+td$ were a point in $\stdCone$, then 
it would be clear that $u + v +td \in \reInt \stdCone$, by item $(\ref{laux:ri})$ of Lemma~\ref{lemma:aux}. Unfortunately, 
this does not happen in general. However,  as $t$ increases, $v+ td $ gets closer and closer to 
$\stdCone$, so adding $u$ will eventually drag everything to the relative 
interior.
\begin{proof}%todo: more gentle
	Let \[
	C = \{u+v + td \mid t\geq 0 \}.
	\]
	To prove the lemma, it is enough to show that $\reInt C \cap \reInt \stdCone \neq \emptyset$. 
	Suppose, for the sake of obtaining a contradiction, that  
	$\reInt C \cap \reInt \stdCone = \emptyset$. This implies that there is a separating hyperplane 
	\[
	H = \{w \in \ambSpace \mid \inProd{z}{w} = \theta \},
	\]
	such that $H$ properly separates $C$ and $\stdCone$, see Theorem~11.3 in \cite{rockafellar}. We recall that \emph{proper separation} means  that $C$ and $\stdCone$ lie in opposite closed half-spaces defined by $H$ and 
	\emph{$H$ does not contain both sets at the same time}. 
	Without loss of generality, we may assume that $C$ and $\stdCone$ lie in the ``lower'' and ``upper'' closed halfspaces defined by $H$, respectively.
	Therefore, we have
	\begin{equation}\label{eq:hyper_halves}
	\inProd{u}{z} + \inProd{v}{z} + \inProd{td}{z} \leq \theta \leq \inProd{w}{z}, \quad \forall t \geq 0,\,\, \forall w \in \stdCone.
	\end{equation}
	For \eqref{eq:hyper_halves} to hold, we must have $z \in \stdCone ^*$ and 
	$\theta \leq 0$ (since $0 \in \stdCone$). 
	Furthermore, because $d \in \stdCone$ (by assumption) and $z \in \stdCone ^*$,
	we have $\inProd{d}{z} \geq 0$. However, in view of \eqref{eq:hyper_halves},
	it must be the case that 
	\begin{equation}\label{eq:lem_reint_1}
	\inProd{d}{z} = 0,
	\end{equation} since $t$ can be taken to be any nonnegative number. 
	By item~($\ref{laux:conj}$) of Lemma~\ref{lemma:aux}, we conclude that $z \in \minFacePoint{d}{\stdCone}^\Delta$.
	
	From item ($\ref{laux:ts}$) of Lemma~\ref{lemma:aux}, we have $\tanSpace{d}{\stdCone} = \minFacePoint{d}{\stdCone}^{\Delta\perp}$.
	Therefore,
	\begin{equation}\label{eq:lem_reint_2}
	\inProd{v}{z} = 0.
	\end{equation}
	From \eqref{eq:hyper_halves}, \eqref{eq:lem_reint_1}, \eqref{eq:lem_reint_2} and 
	recalling that $\theta \leq 0$, we obtain $\inProd{u}{z} = 0$. 
	This implies that $C \subseteq H$ and $\theta = 0$. 
	By item $(\ref{laux:ri3})$ of Lemma~\ref{lemma:aux}, we have $\stdCone \subseteq H$ as well, since $u \in \reInt \stdCone$ by assumption. This contradicts 
	the properness of the separation.
\end{proof} 

We are now ready to state a result on the conservation of feasibility after one facial reduction step. For what follows, we recall 
that \emph{\eqref{eq:c_dual} is in weak status} if it is weakly feasible or weakly infeasible. We also recall the following basic facts. A face 
$\stdFace$ of $\stdCone$ always satisfies $\stdFace = \stdCone \cap \spanVec \stdFace$, therefore we have
\begin{equation}
\stdFace^* = \closure (\stdCone^* + \stdFace^\perp). \label{eq:facedual}
\end{equation}
Also, if $C_1$ and $C_2$ are two convex sets we have 
$\reInt (C_1+C_2) = \reInt C_1 + \reInt C_2$, $\reInt (\closure C_1) = \reInt C_1$.
%todo: write about the role of f
\begin{proposition}[Conservation of feasibility]\label{prop:feas_changes}
	Let $f\in \stdCone \cap \matRange \stdMap^\T$ and 
	let $\stdFace \coloneqq \stdCone^* \cap \{f\}^\perp = \minFacePoint{f}{\stdCone}^\Delta$ (see item~$(\ref{laux:conj})$ of Lemma~\ref{lemma:aux}). Let \eqref{eq:d_p} be the problem obtained by replacing 
$\stdCone$ by $\stdFace^*$ in \eqref{eq:c_dual}, i.e., 
\begin{align}
\underset{y}{\sup} & \quad \inProd{b}{y} \label{eq:d_p} \tag{{D}'} \\ 
\mbox{subject to} & \quad c - \stdMap ^\T y \in \stdFace^*. \nonumber
\end{align}
 We have the following  
relations:
\begin{enumerate}[$(i)$]
	\item  \eqref{eq:c_dual} is strongly feasible if and only if \eqref{eq:d_p} is;
	\item  \eqref{eq:c_dual} is strongly infeasible if and only if \eqref{eq:d_p} is;
	\item  \eqref{eq:c_dual} is in weak status if and only if \eqref{eq:d_p} is.
\end{enumerate}
\end{proposition}
\begin{proof}
	\begin{enumerate}[$(i)$]
	\item First, since  $	\stdFace^* = \closure (\stdCone + \stdFace ^\perp)$ and $\reInt \stdFace^\perp = \stdFace^\perp$, we have
	\begin{equation}
	\reInt \stdFace^* = \reInt (\closure (\stdCone + \stdFace ^\perp)) = (\reInt \stdCone) + \stdFace ^\perp \label{eq:reint_f}.
	\end{equation}
	
	Now, suppose that  \eqref{eq:c_dual} is strongly feasible.
	Since $\reInt \stdCone \subseteq \reInt \stdCone +\stdFace^\perp$, we conclude that \eqref{eq:d_p} must be strongly feasible as well.
	
	Conversely, suppose that \eqref{eq:d_p} is strongly feasible and let $s = c- \stdMap^\T y$ be such that $s \in \reInt \stdFace^*$. 
	By \eqref{eq:reint_f}, 
	we have 
	\[
	s = u+v,	
	\]	
	where $u \in \reInt \stdCone$ and $v \in \stdFace ^\perp $.
	By items ($\ref{laux:conj}$) and ($\ref{laux:ts}$) of Lemma~\ref{lemma:aux}, we have
	\[\stdFace ^\perp = \minFacePoint{f}{\stdCone}^{\Delta\perp} =  \tanSpace{f}{\stdCone}.\] 
	Invoking Lemma \ref{lemma:reint} we conclude that there exists $t > 0$ such that \[u + v + tf \in \reInt \stdCone .\]
	Since $f \in \matRange \stdMap^\T$, there exists $\hat y$ such that 
	$f = -\stdMap^\T \hat y$. We conclude that 
	\[
	s+ tf = u + v + tf = c - \stdMap^\T(y+\hat y) \in \reInt \stdCone,
	\]
	thus showing that \eqref{eq:c_dual} is strongly feasible.
	\item  Since $\stdCone \subseteq \stdFace^*$, if 
	\eqref{eq:d_p} is strongly infeasible, \eqref{eq:c_dual} must be strongly infeasible as well.
	
	Conversely, suppose that \eqref{eq:c_dual} is strongly infeasible.
	Then, Proposition~\ref{prop:infeas} implies the existence of $x$
	satisfying
	\[
\inProd{c}{x} = -1, \quad x \in \stdCone^* \cap \ker \stdMap.	
	\]
	Since $x \in \ker \stdMap$ and $f \in \matRange \stdMap^\T$, 
	we have $\inProd{x}{f} = 0$. So, in fact, $x \in \stdFace$. 
	Therefore, by Proposition~\ref{prop:infeas}, the same 
	$x$ attests that \eqref{eq:d_p} is strongly infeasible.

	\item First, we recall that the four feasibility statuses described in Section~\ref{sec:type} are mutually exclusive. Next, suppose that \eqref{eq:c_dual} is in weak status, i.e., it is either weakly feasible or 
	weakly infeasible. By items $(i)$ and $(ii)$ proved so far, \eqref{eq:d_p} cannot be strongly infeasible nor strongly feasible because that would imply that \eqref{eq:c_dual} has that same feasibility status. Therefore, \eqref{eq:d_p} must be in weak status as well.
	
	Conversely, suppose that \eqref{eq:d_p} is in weak status. Again, items $(i)$ and $(ii)$ imply that the only two possibilities for \eqref{eq:c_dual} are weak infeasibility or weak feasibility.
\end{enumerate}
\end{proof}
%Typically, we will invoke Proposition~\ref{prop:feas_changes} taking $f$ to be a reducing direction for \eqref{eq:c_dual} (Definition~\ref{def:red}). Nevertheless, $f$ need not to be a reducing direction in order for Proposition~\ref{prop:feas_changes} to hold, since 
We can now state and prove our main result on the preservation of feasibility status after facial reduction is performed on \eqref{eq:c_primal}. Intuitively, Theorem~\ref{theo:min_changes} means the following. Whenever facial reduction is applied to, say, 
\eqref{eq:c_primal}, we obtain a new problem which is ensured to be strongly feasible, if \eqref{eq:c_primal} is feasible. This new problem will also have a dual problem whose feasibility properties might be different than the original dual problem \eqref{eq:c_dual}.
However, Theorem~\ref{theo:min_changes} says that no drastic changes are allowed, i.e., if \eqref{eq:c_dual} was strongly feasible to begin with, it will stay strongly feasible. The only possible room for change is that a weakly feasible/infeasible problem might become weakly infeasible/feasible. Theorem~\ref{theo:min_changes} also contains the relatively surprising fact that strong feasibility of the new dual implies  strong feasibility of \eqref{eq:c_dual}.
\begin{theorem}[Preservation of feasibility under facial reduction]\label{theo:min_changes}
	Let $\minFaceP$ denote the minimal face of $\stdCone^*$ that 
	contains the feasible region of \eqref{eq:c_primal} and suppose that $\minFaceP \neq \emptyset$.
	Consider the problem obtained by replacing $\stdCone$ by $(\minFaceP)^*$ in \eqref{eq:c_dual}, i.e.,
	\begin{align}
	\underset{y}{\sup} & \quad \inProd{b}{y} \label{eq:c_dual_p} \tag{Conic-D-FP} \\ 
	\mbox{subject to} & \quad c - \stdMap ^\T y \in (\minFaceP)^*, \nonumber
	\end{align}	
	The following hold.
	\begin{enumerate}[$(i)$]
		\item \eqref{eq:c_dual} is strongly feasible if and only if \eqref{eq:c_dual_p} is strongly feasible;
		\item \eqref{eq:c_dual} is strongly infeasible if and only if \eqref{eq:c_dual_p} is strongly infeasible;
		\item \eqref{eq:c_dual} is in weak status if and only if \eqref{eq:c_dual_p} is in weak status. %\textbf{weakly feasible}.
	\end{enumerate}
\end{theorem}
\begin{proof}
	Applying facial reduction to \eqref{eq:c_primal} (e.g., Algorithm~\ref{alg:fra}), we see that $\minFaceP$ can be written 
	as
	\[
	\minFaceP =  \stdCone^* \cap \{f_1\}^\perp \cap \cdots \cap \{f_\ell \}^\perp,
	\]
	where each $f_i$ satisfies  
	\[
	f_i \in \left( \stdCone^* \cap \{f_1\}^\perp \cap \cdots \{f_{i-1}\}^\perp \right)^*\cap \matRange \stdMap ^\T.
	\] 
	Now, denote by $(D_i)$ the problem obtained by replacing 
	$\stdCone$ by $\left( \stdCone^* \cap \{f_1\}^\perp \cap \cdots \{f_{i-1}\}^\perp \right)^*$ in \eqref{eq:c_dual}. We observe 
	the following:
	\begin{enumerate}
		\item $(D_1)$ and $(D_{\ell+1})$ are precisely \eqref{eq:c_dual} and \eqref{eq:c_dual_p}, respectively.
		\item $f_i$ is a reducing direction for $(D_i)$, so Proposition~\ref{prop:feas_changes} applies to $f_i$, $(D_i)$ and $(D_{i+1})$, for $i = 1,\ldots, \ell$.
	\end{enumerate}
	By induction, we conclude that items $(i)$, $(ii)$ and $(iii)$ hold.
%	Furthermore, we have that \eqref{eq:c_dual} is in weak status if and only if \eqref{eq:c_dual_p} is in \textbf{weak status}.
%	All that remains is to show that \eqref{eq:c_dual_p}  cannot be weakly infeasible. 
%	
%	Consider the primal counterpart of \eqref{eq:c_dual_p}, i.e., 
%	the problem of minimizing $\inProd{c}{x}$ subject to 
%	$\stdMap x = b$ and $x \in \minFaceP$. Denote its optimal value by $\hat \theta$. Since we are assuming that $\minFaceP \neq \emptyset$, if  $\hat \theta$ turns out to be finite, \eqref{eq:c_dual_p} must have an optimal solutions affording the same value. In particular,
%	must be feasible.
%	
%	Therefore, under the assumption that $\minFaceP \neq \emptyset$, the only way that \eqref{eq:c_dual_p} could possibly be infeasible is if 
%	$\hat \theta = -\infty$. So, suppose that $\hat \theta = -\infty$. 
%	Then, there exists a sequence $\{x^k\}$   	
\end{proof}

We are now in position to state our main result on double facial reduction.

\begin{theorem}[Double facial reduction]\label{theo:double}
	Suppose $\minFaceD \neq \emptyset$ and consider the problems  \eqref{eq:fr_primal} and \eqref{eq:fr_dual} above. 
%	be the problem 
%	obtained by replacing $\stdCone^*,\stdCone$ by $(\minFaceD)^*$, $\minFaceD$ in \eqref{eq:c_primal} and \eqref{eq:c_dual}, respectively.
	
	Let $\minFace{\text{\ref{eq:fr_primal}}}$ be the minimal face of $(\minFaceD)^*$ that contains the feasible region of \eqref{eq:fr_primal}.
	Consider the pair of problems \eqref{eq:p_min} and \eqref{eq:p_min_d}, which we repeat below for convenience.%
	\begin{multicols}{2}\noindent
		\begin{align}
		\underset{x}{\inf} & \quad \inProd{c}{x} \label{eq:p_min}\tag{P*}\\ 
		\mbox{subject to} & \quad \stdMap x = b \nonumber \\ 
		&\quad x \in \minFace{\hat P} \nonumber 
		\end{align}
		\begin{align}
		\underset{y}{\sup} & \quad \inProd{b}{y} \label{eq:p_min_d} \tag{D*} \\ 
		\mbox{subject to} & \quad c - \stdMap ^\T y \in (\minFace{\hat P})^* \nonumber.
		\end{align}	
	\end{multicols}\noindent	
	The following hold.
	\begin{enumerate}[$(i)$]
		\item The optimal value of \eqref{eq:c_dual} ($\dOpt$) is finite if and only if $\minFace{\text{\ref{eq:fr_primal}}} \neq \emptyset$.
		In this case,  	\eqref{eq:p_min} and \eqref{eq:p_min_d} are both strongly feasible and 
		\[
		\dOpt = \opt{\text{\ref{eq:p_min}}} = \opt{\text{\ref{eq:p_min_d}}}.
		\]
		\item $\dOpt = + \infty$ if and only if $\minFace{\text{\ref{eq:fr_primal}}}  = \emptyset$.
	\end{enumerate}
	
\end{theorem} 
\begin{proof}
	\begin{enumerate}[$(i)$]
		\item 
		Suppose that the optimal value of \eqref{eq:c_dual} is finite.	
		Then, by Proposition~\ref{prop:slater}, the optimal value of \eqref{eq:fr_primal}  must be 	equal to $\dOpt$, since \eqref{eq:fr_dual} is strongly feasible. In particular, 
		\eqref{eq:fr_primal} must be feasible and, therefore, 
		$\minFace{\text{\ref{eq:fr_primal}}} \neq \emptyset$.
		Since $\minFace{\text{\ref{eq:fr_primal}}}$ is the minimal face of $(\minFaceD)^*$ that contains the feasible region of \eqref{eq:fr_primal}, \eqref{eq:p_min} is strongly feasible and its optimal value must coincide 
		with the optimal value of \eqref{eq:fr_primal}, which is $\dOpt$.
		
		Next, since  \eqref{eq:p_min} is strongly feasible and has finite optimal value, \eqref{eq:p_min_d} must have the same optimal value.  Therefore, as stated, we have
		\[
			\dOpt = \opt{\text{\ref{eq:p_min}}} = \opt{\text{\ref{eq:p_min_d}}}.
		\]
		
		By item $(i)$ of Theorem \ref{theo:min_changes}, 
		substituting $\minFaceD$ by $(\minFace{\text{\ref{eq:fr_primal}}})^*$ preserves strong feasibility, 		so \eqref{eq:p_min} and \eqref{eq:p_min_d} are both strongly feasible.

		Conversely, suppose that $\minFace{\text{\ref{eq:fr_primal}}} \neq \emptyset$.
		This means that \eqref{eq:p_min} is feasible.
		So \eqref{eq:fr_primal} must be feasible as well, because any feasible solution to \eqref{eq:p_min} must be a feasible solution to \eqref{eq:fr_primal}. Since 
		we are assuming that $\minFaceD \neq \emptyset $, \eqref{eq:fr_dual} must be feasible as well. Therefore, \eqref{eq:fr_primal} and \eqref{eq:fr_dual} are feasible primal and dual problems sharing the same optimal value, which must be finite. Since \eqref{eq:fr_dual} shares the same optimal value with \eqref{eq:c_dual}, we conclude that $\dOpt$ is indeed finite.
		
		\item It follows from item $(i)$. 
	\end{enumerate}
\end{proof}

The conclusion is that, when $\dOpt$ is finite, 
the pair of problems 
\eqref{eq:p_min} and \eqref{eq:p_min_d} are both strongly feasible.
When $\stdCone = \PSDcone{n}$, they can indeed be solved by $\intOracle$ in order to obtain $\dOpt$ by Proposition~\ref{prop:refor}. At this stage, even though $\dOpt$ might have been unattained for \eqref{eq:c_dual}, 
\eqref{eq:p_min_d} is never hindered by unattainment. 

The problem, however, is that a feasible 
solution to \eqref{eq:p_min_d} might not be feasible 
to \eqref{eq:c_dual}. And, indeed, if $\dOpt$ is finite 
but not attained, even though \eqref{eq:p_min_d} has an optimal solution, \eqref{eq:c_dual} will not have optimal solutions.
When $\dOpt$ is finite but not attained, the best we can do 
is to compute some solution $y_{\epsilon}$ satisfying $\inProd{b}{y_{\epsilon}} \geq \dOpt -\epsilon$, for some arbitrary $\epsilon > 0$. We will discuss this issue 
in the next subsection.

Before we move on, we give an intuitive explanation of why 
unattainment disappears when doing facial reduction.
%In fact, there are a few ways of explaning intuitively why unattainment disappears. 
The ``dual'' explanation is that strong feasibility is satisfied 
at \eqref{eq:p_min} (which is the Lagrangian dual of \eqref{eq:p_min_d}), so, of course, \eqref{eq:p_min_d} must be attained when $\dOpt$ value is finite.

We now give an ``primal'' explanation of why unattainment disappears. 
Suppose that the optimal value $\dOpt$ of \eqref{eq:c_dual} is finite 
but not attained. Then, there exists a sequence $\{y^k\}$ such that $\inProd{b}{y_k} \to \dOpt$ and \[s^k \coloneqq c-\stdMap^\T y^k \in \stdCone,\quad \forall k.\] 
 As we are assuming unattainment and $\feasS$ is closed, 
$\{y^k\}$ cannot be bounded. Passing to a subsequence if necessary, 
we may assume that $\norm{y^k} \to \infty$ and $y^k/\norm{y^k}$ converges 
to some $ y \in \Re^m$ and that $\dOpt \geq \inProd{b}{y^k} \geq \dOpt-1$ for every $k$.  Dividing $c-\stdMap^\T y^k \in \stdCone$
and $\dOpt \geq \inProd{b}{y^k} \geq \dOpt-1$ by $\norm{y^k}$ and taking limits, we conclude that $f \coloneqq - \stdMap^\T  y$ satisfies
\[
f \in \stdCone,\quad f \in \matRange \stdMap^\T, \quad  \inProd{b}{{y}} = 0
\]
and ${y}$ must be nonzero.
This shows that $(f,y)$ is a reducing direction for \eqref{eq:c_primal}, see Definition~\ref{def:red}. In fact, with more effort, we
can show that there must be at least one pair $(f,y)$ as above satisfying 
$f \not \in \lineality \stdCone$, see  Lemma~\ref{lem:fr}.
In other words, a necessary condition for 
unnatainment of \eqref{eq:c_dual} is the existence of $(f,y)$ as above with $f \not \in \lineality \stdCone$.
Informally speaking, $(f,y)$ acts as a ``recession direction'' for the problem \eqref{eq:c_dual}. 
This suggests that one possible way of fixing unattainment is by 
preventing $f$ from becoming a recession direction. This is accomplished, for instance, 
by substituting $\stdCone$ by $\closure (\stdCone+\spanVec\{f\})$, so 
that $f \in \lineality (\closure (\stdCone+\spanVec\{f\}))$.
However, $\closure (\stdCone+\spanVec\{f\})$ is equal to 
$(\stdCone^*\cap\{f\}^\perp)^*$, which corresponds to a single 
facial reduction step done at \eqref{eq:c_primal}.

In other words, from the point of view of \eqref{eq:c_dual}, facial reduction done at \eqref{eq:c_primal} removes recession directions that affect attainment. We remark that  Abrams~\cite{Abrams75} also proposed a regularization procedure that removes recession directions, in order to fix unattainment in convex programming.

\subsection{Obtaining feasible almost optimal solutions}\label{sec:almost}
The pair of problems \eqref{eq:p_min_d} and \eqref{eq:p_min} are strongly feasible 
and their common optimal value is $\dOpt$. However, an optimal solution to 
\eqref{eq:p_min_d} is unlikely to be feasible for \eqref{eq:c_dual}. In fact, it may 
happen that $\dOpt$ is not attained, in which case  \eqref{eq:c_dual} has no optimal solution at all.

Nevertheless, we will show how to construct feasible solutions that are almost optimal for \eqref{eq:c_dual} using 
the directions obtained when calling Algorithm~\ref{alg:fra} with \eqref{eq:fr_primal} as 
input, see Algorithm~\ref{alg:eps}.
%We will denote by 
%$y^*$ an optimal solution to \eqref{eq:p_min_d} and by $s^*$ the corresponding 
%slack $s^* = c - \stdMap ^\T y^*$.

\begin{algorithm}[h]
	\caption{Finding an $\epsilon$-optimal solution to \eqref{eq:c_dual} }\label{alg:eps}
	\KwIn{
		\begin{enumerate}
			\item Reducing directions for \eqref{eq:fr_primal} (Definition~\ref{def:red}): $(f_1,y_1), \ldots, (f_{\ell _2},y_{\ell_2})$.
			\item $\hat y$ such that $c - \stdMap^\T \hat y \in \reInt \minFaceD$,
			\item an optimal solution $y^*$ to \eqref{eq:p_min_d}.
			\item $\epsilon > 0$
		\end{enumerate}
	}
	\KwOut{ A feasible solution $y_\epsilon$ to \eqref{eq:c_dual} satisfying 
		$\inProd{b}{y_\epsilon} \geq \dOpt - \epsilon$.
	}
	\eIf{$\inProd{b}{\hat{y}} \geq \dOpt - \epsilon$}
	{ 
				\Return{} $\hat y$\label{alg:eps:ret}\\
	}
	{
		$\beta \leftarrow \frac{\dOpt-\inProd{b}{\hat{y}} - \epsilon}{\dOpt-\inProd{b}{\hat{y}}}$\\
		$w_{\ell _2+1} \leftarrow \beta y^* + (1-\beta)\hat y $ \label{alg:eps:start} \\
		$\stdFace _{\ell _2+1} \leftarrow(\minFaceD)^*\cap \{f_1\}^\perp \cap \cdots \cap \{f_{\ell _2}\}^\perp$ \\
		\For{$i=\ell _2$ \KwTo $1$}
		{
			$\stdFace _i \leftarrow(\minFaceD)^*\cap \{f_1\}^\perp \cap \cdots \cap \{f_{i-1}\}^\perp$\\
			Find $\alpha _i$ positive such that $
			c -\stdMap^\T (w_{i+1} + \alpha _i y_{i})  = c -\stdMap^\T w_{i+1} + \alpha _i f_i \in \reInt \stdFace _{i}^*
			$\label{alg:eps:search} \\ 
			$w_{i} \leftarrow w_{i+1} + \alpha _i y_{i}$
		}
		\Return{} $w_1$
	}
\end{algorithm}
Note that the inner loop in Algorithm~\ref{alg:eps} goes from $\ell _2$ to $1$. This is because we start from a 
relative interior solution to $\stdFace _{\ell _2+1}$ and we have to work our 
way until the bottom of the chain $\minFaceD$.  
The tricky part is ensuring 
that at each step there is indeed an $\alpha _i$ as in Line~\ref{alg:eps:search}.
If there is at least one $\alpha _i$, then any number larger than $\alpha _i$ will work. 
Therefore, it is enough to keep trying larger and larger numbers until the condition 
in Line~\ref{alg:eps:search} is met. We will now show that an appropriate $\alpha _i$
always exists and that Algorithm~\ref{alg:eps} is indeed correct.
For that, we need a few auxiliary results. First, suppose that $f \in \stdFace^*$, for $\stdFace$ a closed convex cone. We have 
by items~$(\ref{laux:conj})$ and $(\ref{laux:ts})$ of Lemma~\ref{lemma:aux} and \eqref{eq:facedual} that
\begin{equation}
(\stdFace \cap \{f\}^\perp)^*  = (\minFacePoint{f}{\stdFace^*}^{\Delta})^*  = \closure(\stdFace^* + \minFacePoint{f}{\stdFace^*}^{\Delta\perp}) = 
\closure(\stdFace^* + \tanSpace{f}{\stdFace^*}). \label{eq:lem_aux}
\end{equation}
\begin{lemma}\label{lem:ri}
Suppose that $s \in (c+\matRange \stdMap^\T)\cap \reInt \stdCone$ and let $\minFaceP$ be the minimal face of $\stdCone^*$ that contains the feasible region of \eqref{eq:c_primal}.
If $\minFaceP \neq \emptyset$, then $s \in \reInt ((\minFaceP)^*)$.
	
\end{lemma}
\begin{proof}
	It is a consequence of the proof of Theorem~\ref{theo:min_changes}. 
	Nevertheless, we will work out the details here.
	As in the proof of Theorem~\ref{theo:min_changes}, 	applying facial reduction to \eqref{eq:c_primal} (e.g., Algorithm~\ref{alg:fra}), we see that $\minFaceP$ can be written 
	as
	\[
	\minFaceP =  \stdCone^* \cap \{f_1\}^\perp \cap \cdots \cap \{f_\ell \}^\perp,
	\]
	where each $f_i$ satisfies  
	\[
	f_i \in \left( \stdCone^* \cap \{f_1\}^\perp \cap \cdots \{f_{i-1}\}^\perp \right)^*\cap \matRange \stdMap ^\T.
	\] 
	Now, let $\stdFace _i \coloneqq \left( \stdCone^* \cap \{f_1\}^\perp \cap \cdots \cap \{f_{i-1}\}^\perp \right)$. We observe 
	the following:
	 
	\begin{enumerate}
		\item $\stdFace _1 = \stdCone^*$ and $\stdFace _{\ell+1} = \minFaceP$,
		\item $\stdFace _i \leftarrow \stdFace _{i-1} \cap \{f_{i-1} \}^\perp$, for all $i > 1$.
	\end{enumerate}
	By hypothesis, we have $ s \in \reInt \stdFace _1^*$ and by \eqref{eq:lem_aux} we have
	\begin{align*}
	\stdFace _2^* = \closure (\stdFace _1^* + \tanSpace{f_1}{\stdFace_1^*})\\
	\reInt \stdFace _2^* = (\reInt \stdFace_1^*) + \tanSpace{f_1}{\stdFace_1^*}.
	\end{align*}
	Therefore, $s \in \reInt \stdFace _2^*$ as well. At the $i$-th step, we have:
	\begin{align*}
	\stdFace _i^* = \closure (\stdFace _{i-1}^* + \tanSpace{f_{i-1}}{\stdFace_{i-1}^*})\\
	\reInt \stdFace _i^* = (\reInt \stdFace_{i-1}^*) + \tanSpace{f_{i-1}}{\stdFace_{i-1}^*}.
	\end{align*}
	By induction, we conclude that $s \in \reInt \stdFace _i^*$ for every $i$.
	In particular, $s \in \reInt ((\stdFace_{\ell+1})^*) = \reInt ((\minFaceP)^*)$.
\end{proof}

\begin{theorem}
	Algorithm \ref{alg:eps} is correct, that is, the output $y_\epsilon$ is indeed a feasible solution 
	to \eqref{eq:c_dual} satisfying $\inProd{b}{y_\epsilon} \geq \dOpt - \epsilon$.
\end{theorem}\label{theo:alg_eps}
\begin{proof}
	\fbox{$y_\epsilon$ is $\epsilon$-optimal.} By construction, 
	$\inProd{b}{w_{\ell _2+1}} \geq \dOpt -\epsilon$. Moreover, all the 
	$y_i$ satisfy $\inProd{b}{y_i} = 0$. Therefore, $\inProd{b}{y_\epsilon} \geq \dOpt -\epsilon$.
	
	\fbox{$y_\epsilon$ is feasible for \eqref{eq:c_dual}.} If the algorithm stops before 
	Line~\ref{alg:eps:start},  $y_\epsilon$ is feasible because $\minFaceD \subseteq \stdCone$. So suppose that we have reached Line~\ref{alg:eps:start}.
	Since $\stdFace _1^* = \minFaceD$, 
	if Line~\ref{alg:eps:search} is correct, then  $y_\epsilon$ is feasible 
	for \eqref{eq:c_dual}. We now show that Line~\ref{alg:eps:search} is indeed correct.
	
	Let 
	\begin{align*}
	\hat{s} & \coloneqq c-\stdMap^\T \hat y,\\
	s_{i} & \coloneqq c-\stdMap^\T w_{i}, \text{ for } i = 1, \ldots, \ell_2+1.
	\end{align*}
	
	We have $\hat{s} \in \reInt \minFaceD$ and, by Lemma \ref{lem:ri}, 
	$\hat s \in \reInt((\minFace{\text{\ref{eq:fr_primal}}})^*)$ as well.
	Note that $s_{\ell _2+1}$ is a strict convex combination of 
	$c-\stdMap^\T y^*$ and $\hat s$, see Line~\ref{alg:eps:start}. These points belong to $(\minFace{\text{\ref{eq:fr_primal}}})^*$ and  $\reInt((\minFace{\text{\ref{eq:fr_primal}}})^*)$, respectively, so  $s_{\ell _2+1}$ must belong to   $\reInt((\minFace{\text{\ref{eq:fr_primal}}})^*)$ as well. In addition, $s_{\ell _2+1}$ is a feasible 
	slack for \eqref{eq:p_min_d}.

	Now suppose  that we have shown that $s_{i+1} \in   \reInt  \stdFace _{{i+1}}^*$, for some $i$. By \eqref{eq:lem_aux}, we have
	\begin{align*}
	\stdFace _{{i+1}}^* & = (\stdFace _{i} \cap \{f_i\}^\perp)^*  = \closure (\stdFace _{i}^* + \tanSpace{f_i}{\stdFace _i^*}) \\
	\reInt \stdFace _{{i+1}}^* & =  (\reInt \stdFace _{i}^*) + \tanSpace{f_i}{\stdFace _i^*}.
	\end{align*}
	Therefore, $s_{i+1} = u_i + v_i $ for some $u_i \in \reInt \stdFace _{i}^*$ and
	$v_i \in \tanSpace{f_i}{\stdFace _i^*}$. 
	We 
	can apply Lemma \ref{lemma:reint} to $u_i,v_i,f_i, \stdFace _i^*$ and conclude the 
	existence of positive $\alpha _i$ such that $s_{i+1} + \alpha f_i$ belongs to 
	$\reInt \stdFace _i^*$. Therefore,
	\[
	c - \stdMap^\T(w_{i+1} + \alpha y_i) \in \reInt \stdFace _i^*.
	\]
	In other words, $s_i = c-\stdMap^\T w_{i} \in \reInt  \stdFace _{{i}}^*$. 
	
	By induction, we conclude 
	that at each iteration it is possible to find  $\alpha _i$ as stated in Line~\ref{alg:eps:search}.
\end{proof}

\subsubsection{Computational aspects of Algorithm~\ref{alg:eps}}\label{sec:eps_cost}
Having proved the correctness of Algorithm~\ref{alg:eps} in Theorem~\ref{theo:alg_eps}, we discuss the computation of $\alpha _i$ in Line~\ref{alg:eps:search}, which is the most computationally expensive part of the algorithm. As remarked previously, the existence of 
$\alpha _i$ follows from Lemma~\ref{lemma:reint}. So, we will discuss the computation of $t$ as in Lemma~\ref{lemma:reint}. 

Let $u,v,d$ be as in Lemma~\ref{lemma:reint}, i.e., $u \in \reInt \stdCone$, $d \in \stdCone$ and $v \in \tanSpace{d}{\stdCone}$. 
As we remarked before Lemma~\ref{lem:ri}, if $t > 0 $ is such 
that $u+v +td \in \reInt\stdCone$, then any $\hat t \geq t$ will 
also work. So, the simplest algorithm for computing $t$ starts with 
some arbitrary positive value and keeps doubling it, until $u+v +td \in \reInt\stdCone$.
Whether this is computationally reasonable or not depends on how hard it is to decide 
membership in $\stdCone$ and $\reInt \stdCone$.
If $\stdCone=\PSDcone{n}$ or $\stdCone$ is as in 
Proposition~\ref{prop:refor}, the membership problem is not too expensive in contrast to the situation where $\stdCone$ is, say, a completely positive cone.
%Either way, being able to decide membership seems to be a computationally gentler assumption than the capability of solving conic linear programs.

%In the context of semidefinite programming, 
%In our context, where we assume the ability to solve strongly feasible conic linear programs over $\stdCone$, 
%Furthermore, although there are exceptions, deciding membership over a cone $\stdCone$ seems to be significantly less expensive than solving conic linear programs over $\stdCone$.
%In particular, We note that being to able decide membership over $\PSDcone{n}$ (or its faces) is, in principle, a weaker assumption than having access to $\intOracle$. [\textbf{Note}: I am considering adding a more detailed explanation on that]
%To explain that, let $e \in \reInt \stdCone$ be arbitrary.
%This is because $s \in \stdCone $ if and only if the following pair of strongly feasible primal dual problems have optimal value zero\[
%\inf \{\inProd{s}{x} \mid \inProd{x}{e} = 1,  x\in \stdCone^*\} = \sup \{t \mid s -te \in \stdCone \}
%\]
%In addition, the problem  
%\[
%\sup \{-t \mid u+v +td \in \stdCone \}
%\]
%looks quite similar to a minimum eigenvalue computation. 
We also recall 
that given $s \in \S^n$, its minimum eigenvalue $\lambda_{\min}(s)$ satisfies
\[
\lambda _{\min}(s) = \sup\, \{t \mid s -tI_n \in \PSDcone{n} \} = \sup\,\{-t \mid s +tI_n \in \PSDcone{n} \}, 
\]
where $I_n$ is the $n\times n$ identity matrix. Accordingly, the line search problem of finding $t$ with  $u+v +td \in \reInt\stdCone$ seems quite akin to a minimum eigenvalue computation. In the context of semidefinite programming, although one could use $\intOracle$ to solve the membership problem, it seems  more reasonable to solve it directly via minimum eigenvalue computations and/or factorizations.

Nevertheless, for the sake of completeness, we show that for arbitrary $\stdCone$, we can 
obtain $t$ by solving a pair of primal and dual strongly feasible problems.
First, we consider the following pair of problems:
\begin{multicols}{2}
	\noindent\begin{align}
	\underset{x}{\inf} & \quad \inProd{u+v}{x} \label{eq:alpha_primal}\tag{$P_d$}\\ 
	\mbox{subject to} & \quad \inProd{d}{x} = 1 \nonumber \\ 
	&\quad x \in \stdCone^* \nonumber
	\end{align}%
	\noindent
	\begin{align}%
	\underset{t}{\sup} & \quad -t \label{eq:alpha_dual} \tag{$D_d$}\\ 
	\mbox{subject to} & \quad u+v +td \in \stdCone. \nonumber
	\end{align}	
\end{multicols}
Lemma~\ref{lemma:reint} guarantees that \eqref{eq:alpha_dual} is strongly feasible, so we can apply Lemma~\ref{lemma:red_sdp} to the pair \eqref{eq:alpha_primal} and \eqref{eq:alpha_dual} by replacing 
$b,c$ by $-1$ and $u+v$ respectively and $\stdMap^*$ by the map 
that takes $t$ to $-td$. From item $(iii)$ of Lemma~\ref{lemma:red_sdp}, if we solve the pair \eqref{eq:red} and \eqref{eq:red_dual} we will obtain $t$ such that $u+v +td \in \reInt\stdCone$. If $t$ turns out to be negative, we can just set it to zero.
%
%Therefore, it is enough to solve 
%the pair \eqref{eq:red} and \eqref{eq:red_dual}, in order to obtain $t$. In particular, this can be done with $\intOracle$, when $\stdCone$ is $\PSDcone{n}$ or a face of $\PSDcone{n}$. This leads immediately to the following result.
%\begin{proposition}\label{prop:eps_cost}
%When $\stdCone=\PSDcone{n}$, Algorithm~\ref{alg:eps} can be implemented by invoking $\intOracle$ at most $\ell_2$ times, where $\ell_2$ is the number of reducing directions (see input 1. of Algorithm~\ref{alg:eps}). 
%In particular, since the number of reducing directions is bounded above  $n+1$, we also have that $n+1$ is an upper bound for the number of times $\intOracle$ is invoked.
% \end{proposition}
%We remark, however, that Proposition~\ref{prop:eps_cost} seems to be exceedingly conservative. 

%That said, if $\stdCone$ is a computationally difficult cone such as the cone of completely positive matrices, \eqref{eq:alpha_dual} can still be quite hard.
%On the other hand, the situation is less clear if $\stdCone$ is a computationally difficult cone such as the cone of completely positive matrices.

Moving on, we also notice the following curious feature of Algorithm~\ref{alg:eps}. Except for 
the problem of finding $\alpha _i$ in Line~\ref{alg:eps:search}, the complexity of Algorithm~\ref{alg:eps} does not depend on $\epsilon$.
Decreasing $\epsilon$, however, might lead to larger $\alpha _i$ in 
Algorithm~\ref{alg:eps}. 

Finally, we compare Algorithm~\ref{alg:eps} to an elementary method for computing an $\epsilon$-optimal solution. Namely, once $\dOpt$ is known and given some fixed $\epsilon > 0$, the naive approach is to directly apply a feasibility algorithm to the problem of finding  a point in the set
\begin{equation}\label{eq:naive}
\{y \in \Re^m \mid c - \stdMap^\T y \in \minFaceD, \inProd{b}{y} \geq \dOpt - \epsilon  \}.
\end{equation}
The set in \eqref{eq:naive} can be expressed as the feasible set of a conic linear program over 
$\minFaceD \times \Re_+$:
\begin{align}
\underset{y}{\sup} & \quad 0 \label{eq:dual_naive} \tag{Naive} \\ 
\mbox{subject to} & \quad c - \stdMap ^\T y \in \minFaceD , \nonumber\\
& \quad   \epsilon -\dOpt + \inProd{b}{y} \in \Re_+. \nonumber
\end{align}	
Since \eqref{eq:fr_dual} is strongly feasible, there  is  $y_{\epsilon}$ such that $c- \stdMap^\T y_{\epsilon} \in \reInt \minFaceD$ and $\inProd{b}{y_{\epsilon}} > \dOpt - \epsilon$,
thus showing that  \eqref{eq:dual_naive} is strongly feasible\footnote{A  way to see that this must be true is through the correctness of Algorithm~\ref{alg:eps}. Lines~\ref{alg:eps:ret} and \ref{alg:eps:search} ensure that Algorithm~\ref{alg:eps} returns an $\epsilon$-optimal $y_{\epsilon}$ for which 
	the slack $c - \stdMap^\T y_{\epsilon}$ is a relative interior point of the minimal face of \eqref{eq:c_dual}. }. 
Then, if we solve the auxiliary problems \eqref{eq:red}, \eqref{eq:red_dual} in Lemma~\ref{lemma:red_sdp} associated to \eqref{eq:dual_naive}, we obtain a feasible solution to \eqref{eq:dual_naive} by item $(iii)$ of Lemma~\ref{lemma:red_sdp}. A feasible solution to \eqref{eq:dual_naive} is precisely an $\epsilon$-optimal solution to \eqref{eq:c_dual}.

The drawback of this naive approach is that for every $\epsilon$ we need to, at the very least,  solve one extra conic linear program. However, the approach using Algorithm~\ref{alg:eps} only requires solving cone membership problems which might be significantly cheaper depending on $\stdCone$.
%Let $s_i \coloneqq c - \stdMap^\T w _{i+1}$. Recalling that 
%$ f = -\stdMap^\T y_i$ holds, we have that $	c -\stdMap^\T (w_{i+1} + \alpha _i y_{i}) \in \reInt \stdFace_i^*$ if and only if
%\[
%s_i + \alpha _i f_i \in \reInt \stdFace_i^*.
%\]
%As in the proof of Theorem~\ref{theo:alg_eps}, $s_i$ can be written 
%as  $u_i + v_i $ for some $u_i \in \reInt \stdFace _{i}^*$ and
%$v_i \in \minFacePoint{f_i}{\stdFace _i^*}^{\Delta \perp}$. Equivalent, we need to find $\alpha _i > 0$ such that 
%\[
%u_i + v_i + \alpha _i f_i \in \reInt \stdFace_i^*
%\]

%We conclude this subsection remarking that a curious feature of Algorithm~\ref{alg:eps} is that its cost is not affected by $\epsilon$. If we indeed have access to the necessary inputs to Algorithm~\ref{alg:eps}

\subsection{Distinguishing between weak and strong infeasibility}\label{sec:inf}
When \eqref{eq:c_dual} is infeasible, it can be either strongly or weakly infeasible. Strong infeasibility is relatively straightforward to analyze. Indeed, by Proposition~\ref{prop:infeas}, if we wish to show that \eqref{eq:c_dual} is strongly infeasible, it is enough to exhibit some $x \in \stdCone \cap \ker \stdMap$ such that $\inProd{c}{x} = -1$.
Therefore, in order to prove that \eqref{eq:c_dual} is strongly infeasible we need to 
solve an CLP feasibility problem. In particular, 
when $\stdCone = \PSDcone{n}$, this can be done in 
at most $n+1$ calls to  $\intOracle$, by Proposition~\ref{prop:fra_cost}.

When \eqref{eq:c_dual} is weakly infeasible, the situation is far more complicated. In order to prove that \eqref{eq:c_dual} is weakly infeasible, we have to prove that \eqref{eq:c_dual} is infeasible (which can also be done by Algorithm~\ref{alg:fra}) and that the feasibility problem associated to strong infeasibility is infeasible, i.e., we have to show that there is no solution to 
\[
\mathrm{find }\quad  x \in \stdCone^* \cap \{x \in \ker \stdMap \mid \inProd{c}{x} = -1\}.
\]

In this subsection, we will use the techniques of Section~\ref{sec:almost} to analyze weak infeasibility. This is not surprising because 
weak infeasibility and non-attainment of optimal solutions are closely related as we will see in a moment. In fact, let $\stdInt \in \reInt \stdCone$ and consider the following problem and its primal counterpart.
\begin{multicols}{2}
	\noindent\begin{align}
	\underset{x}{\inf} & \quad \inProd{c}{x} \tag{P-Feas} \label{eq:feas_p}\\ 
	\mbox{subject to} & \quad \stdMap x = 0 \nonumber \\ 
	& \quad \inProd{\stdInt}{x} = 1 \nonumber\\ 
	&\quad x \in \stdCone^* \nonumber
	\end{align}
	\noindent
	\begin{align}
	\underset{t,y}{\sup} & \quad t  \tag{D-Feas}\label{eq:feas_d} \\ 
	\mbox{subject to} & \quad c - t\stdInt - \stdMap ^\T y \in \stdCone \nonumber.
	\end{align}	
\end{multicols}

Before we proceed, we need two preliminary results.
\begin{proposition}\label{prop:feas_d_sf}
If 	$(c+ \matRange \stdMap^\T ) \cap \spanVec \stdCone \neq \emptyset $, then \eqref{eq:feas_d} is strongly feasible. 
If $(c+ \matRange \stdMap^\T ) \cap \spanVec \stdCone = \emptyset $, then 
\eqref{eq:c_dual} is strongly infeasible.
\end{proposition}
\begin{proof}
Suppose that $(c+ \matRange \stdMap^\T ) \cap \spanVec \stdCone \neq \emptyset $ and let $y$ and $s$ be such that 
\[
s = c- \stdMap^\T y \in  (c+ \matRange \stdMap^\T ) \cap \spanVec \stdCone.
\]
Then, since $\stdInt \in \reInt \stdCone$,  there exists $\alpha > 0$ such that $\stdInt + \alpha s \in \reInt \stdCone$. Since $\stdCone$ is a cone, 
we have $\stdInt/\alpha + s \in \reInt \stdCone$. Therefore, 
$(t,y) \coloneqq (-\alpha,y)$ is a solution for \eqref{eq:feas_d} for which the corresponding slack $c - t\stdInt - \stdMap ^\T y$ belongs to 
$\reInt \stdCone$, thus showing that \eqref{eq:feas_d} is strongly feasible.

Next, suppose that $(c+ \matRange \stdMap^\T ) \cap \spanVec \stdCone = \emptyset $. Because $c+ \matRange \stdMap^\T$ and $\spanVec \stdCone$ are polyhedral sets, this implies that
\[
\dist(c+ \matRange \stdMap^\T, \spanVec\stdCone) > 0,
\]
e.g., see Corollary~19.3.3 and Theorem~11.4 in \cite{rockafellar}.
In particular, we must have $\dist(c+ \matRange \stdMap^\T, \stdCone) > 0$ as well, thus showing that \eqref{eq:c_dual} is strongly infeasible.
\end{proof}

\begin{lemma}\label{lem:wi}
If $\dist(c + \matRange \stdMap^\T, \stdCone) = 0$.
Then, $(c + \matRange \stdMap^\T)\cap \spanVec \stdCone \neq \emptyset $ and 
\[
\dist((c + \matRange \stdMap^\T)\cap \spanVec \stdCone, \stdCone) = 0.
\]	
\end{lemma}
\begin{proof}
For simplicity of notation, let $\stdSpace \coloneqq \matRange \stdMap^\T$.
Since $\dist(\stdSpace+c, \stdCone) = 0$, we also have $\dist(\stdSpace+c, \spanVec\stdCone) = 0$. However, because $\stdSpace+c$ and $\spanVec \stdCone$ are polyhedral sets, this implies that  	$(\stdSpace+c)\cap \spanVec \stdCone \neq \emptyset $, see Corollary~19.3.3 and Theorem~11.4 in \cite{rockafellar}. So, let $\hat c \in (\stdSpace+c)\cap \spanVec \stdCone $. We have
\[
(\stdSpace+c)\cap \spanVec \stdCone = (\stdSpace \cap \spanVec\stdCone) + \hat c.
\]
For the sake of obtaining a contradiction, assume 
that $\dist((\stdSpace+c)\cap \spanVec \stdCone, \stdCone) > 0$. By item $(ii)$ of Proposition~\ref{prop:infeas}, there exists $x$ such that 
\[
\inProd{\hat c}{x} = -1,\quad x \in \stdCone^* \cap ((\stdSpace\cap \spanVec \stdCone)^\perp ).
\]
Therefore, $x$ satisfies $x = u + v$, where $u \in \stdSpace^\perp$ and 
$v \in  \stdCone ^\perp$. Recall that, since $\hat c \in \stdSpace+c$, there exists $l \in \stdSpace$ such that $\hat c = l +c$.
We have
\[
-1 = \inProd{\hat c}{x} = \inProd{l+c}{u+v} = \inProd{c}{u},
\]
because $l + c \in \spanVec \stdCone$, $v \in  \stdCone^\perp$ and $u \in \stdSpace^\perp$. Furthermore, $u \in \stdCone^*$, because 
$u = x-v$ and $ \stdCone ^\perp \subseteq \stdCone^*$.
Gathering all we have shown, we obtain
\[
\inProd{c}{u} = -1, \quad u \in \stdCone^* \cap \stdSpace^\perp.
\]
Again, by item $(ii)$ of Proposition~\ref{prop:infeas}, we conclude 
that $\dist(\stdSpace+c,\stdCone) > 0$, which contradicts our assumptions.

\end{proof}
%\begin{lemma}
%Let $\stdSpace \subseteq \ambSpace$, $c \in \ambSpace$ and $\stdCone$ be a closed convex cone such that $\lineality \stdCone$ is strictly contained 
%in $\stdCone$. Suppose additionally that $\stdSpace+c \subseteq \spanVec \stdCone$ and $\dist(\stdSpace+c, \stdCone) = 0$.	 Then,
%\[
%\dist((\stdSpace+c), \stdCone\cap (\lineality \stdCone^\perp)) = 0.
%\]
%\end{lemma}
%Next, we define the function 
%\[
%\lambda _{\min}(x) \coloneqq \sup \{t \mid x - t\stdInt \in \stdCone\}. 
%\]
%Renegar proved in Proposition~2.1 of \cite{RE16} that 
%$\lambda(\cdot)$ is concave and Lipschitz continuous over $\spanVec \stdCone$, under the assumption that $\stdCone$ is pointed.
%We summarize below the key facts

\begin{proposition}\label{prop:feas_d}
Denote by $\opt{\ref{eq:feas_d}}$ the optimal value of \eqref{eq:feas_d}.
Then,
\begin{enumerate}[$(i)$]
	\item $\opt{\ref{eq:feas_d}} > 0 $ if and only if \eqref{eq:c_dual} is strongly feasible.
	\item $\opt{\ref{eq:feas_d}} = 0 $ if and only if \eqref{eq:c_dual} is in weak status (i.e., either weakly infeasible or weakly feasible).
	\item $\opt{\ref{eq:feas_d}} = 0 $ and is not attained if and only if 
	\eqref{eq:c_dual} is weakly infeasible
	\item  $\opt{\ref{eq:feas_d}} < 0 $ if and only if \eqref{eq:c_dual} is strongly infeasible.
\end{enumerate}
\end{proposition}
\begin{proof}
\begin{enumerate}[$(i)$]
	\item First, suppose that \eqref{eq:c_dual} is strongly feasible and let $s,y$ be such that 
	\[
	s = c - \stdMap^\T y \in \reInt \stdCone.
	\]
	By hypothesis, we have $\stdInt \in \reInt \stdCone$. By item $(\ref{laux:ri2})$ of Lemma~\ref{lemma:aux}, there exists $\alpha > 1$ such 
	that \[
	\alpha s + (1-\alpha)\stdInt \in \reInt \stdCone.
	\]
	Therefore, 
	\[
	c -t\stdInt  - \stdMap^\T y  \in \stdCone,
	\]
	where $t = (\alpha-1)/\alpha$. This shows that $\opt{\rm\ref{eq:feas_d}} > 0 $.
	
	Conversely, if $\opt{\rm\ref{eq:feas_d}} > 0 $, there exists $(t,y)$ such 
	that  $c -t\stdInt - \stdMap^\T y \in \stdCone $ with $t > 0$. By  item $(\ref{laux:ri})$ of 
	Lemma~\ref{lemma:aux}, we have $c - \stdMap^\T y \in \reInt \stdCone$.
%	\item [($iv$)$\Rightarrow$] Here we show the ``$\Rightarrow$'' implication that is, we suppose that $\opt{\ref{eq:feas_d}} < 0 $ and
%	we  prove that \eqref{eq:c_dual} must be strongly infeasible. 
%	First, it must be the case that  $\opt{\ref{eq:feas_d}}$ is finite or $\opt{\ref{eq:feas_d}} = -\infty$. If $\opt{\ref{eq:feas_d}} = -\infty$, we have  that \eqref{eq:feas_d} is infeasible. By Proposition~\ref{prop:feas_d_sf}, it follows that \eqref{eq:c_dual} is strongly infeasible. 
%	
%	Next, if $\opt{\ref{eq:feas_d}}$ is negative and finite, then \eqref{eq:feas_d} has a feasible solution $(t,y)$. Recalling that $\stdInt \in \reInt \stdCone$, we have that
%	\[
%	c - (t-1)\stdInt -\stdMap^\T y \in \reInt \stdCone.
%	\] 
%	This shows that \eqref{eq:feas_d} is strongly feasible and by  Proposition~\ref{prop:slater}, 
%	we conclude that $\opt{\ref{eq:feas_p}}$ is negative as well. This implies the existence of a feasible solution $x$ to \eqref{eq:feas_p} satisfying
%	\[
%	\inProd{c}{x} < 0, \quad \stdMap x = 0, \quad x \in  \stdCone^*.
%	\]
%	By item $(ii)$ of Proposition~\ref{prop:infeas}, we conclude that \eqref{eq:c_dual} is strongly infeasible.

	\item Suppose that \eqref{eq:c_dual} is in weak status. If $\eqref{eq:c_dual}$ is weakly feasible, then there is $y$ such 
	that $(0,y)$ is feasible for \eqref{eq:feas_d}. Therefore, $\opt{\rm\ref{eq:feas_d}} \geq 0 $. By item $(i)$, we must have 
	 $\opt{\rm\ref{eq:feas_d}} = 0 $. 
	 
	 Next, we suppose that  \eqref{eq:c_dual} is weakly infeasible. 
	 From item $(i)$, we must have $\opt{\rm\ref{eq:feas_d}} \leq 0$.
	  By Lemma~\ref{lem:wi}, we have 
	\[
	(c+\matRange \stdMap^\T)\cap \spanVec \stdCone \neq \emptyset.
	\]
	so 	 Proposition~\ref{prop:feas_d_sf} implies that \eqref{eq:feas_d} must be strongly feasible. In particular, $\opt{\rm\ref{eq:feas_d}}$ is finite.
	By Proposition~\ref{prop:slater}, \eqref{eq:feas_p} has optimal
	value equal to $\opt{\rm \ref{eq:feas_d}}$ and 
	there exists a feasible solution $x$ to \eqref{eq:feas_p} satisfying
	\[
	\inProd{c}{x} = \opt{\rm \ref{eq:feas_d}}, \quad \stdMap x = 0, \quad x \in  \stdCone^*.
	\]
	If $\opt{\rm \ref{eq:feas_d}} < 0$, we would have that \eqref{eq:c_dual} is strongly infeasible by Proposition~\ref{prop:infeas}. Since this would contradict the weak infeasibility of \eqref{eq:c_dual}, we must have $\opt{\rm \ref{eq:feas_d}} = 0$.
	This concludes the first half of item $(ii)$. 

	Now suppose that $\opt{\rm \ref{eq:feas_d}} = 0$. Then, there are sequences $\{t_k\}$, 
	$\{y_k\}$ such that $t_{k} \to 0$ and $(t_k,y_k)$ is feasible for \eqref{eq:feas_d}.
	Then, since $	c -t_{k}\stdInt  - \stdMap^\T y_{k}  \in \stdCone$ holds for every $k$, we have
	\[
	\dist(c + \matRange \stdMap^\T, \stdCone) \leq \norm{(c  - \stdMap^\T y_{k})-(c -t_{k}\stdInt  - \stdMap^\T y_{k} )}	\leq \norm{t_k \stdInt}, \quad \forall k.
	\]
	Since $t_k \to 0$, this shows that  $\dist(c + \matRange \stdMap^\T, \stdCone) = 0$ and, therefore, \eqref{eq:c_dual} is in weak status.
	
	\item From item $(ii)$, we know that \eqref{eq:c_dual} is in weak status if and only if 
	$\opt{\rm \ref{eq:feas_d}} = 0$. In particular, if \eqref{eq:c_dual} is weakly infeasible then
	the optimal value of \eqref{eq:feas_d} cannot be attained because, otherwise, we would obtain a feasible solution to \eqref{eq:c_dual}. Conversely, if $\opt{\ref{eq:feas_d}} = 0$ and is not attained, then \eqref{eq:c_dual} is in weak status and cannot be weakly feasible, so it must be weakly infeasible.
	\item[($iv$)] 	Suppose that \eqref{eq:c_dual} is strongly infeasible.
	By items $(i)$, $(ii)$ and $(iii)$, $\opt{\ref{eq:feas_d}} \geq 0$ implies that 
	\eqref{eq:c_dual} is either strongly feasible or in weak status. Therefore, 
	it must be the case that $\opt{\ref{eq:feas_d}} < 0$.
	
	Conversely, if $\opt{\ref{eq:feas_d}} < 0$, items $(i)$, $(ii)$ and $(iii)$ imply that 
	\eqref{eq:c_dual} is neither strongly feasible nor in weak status. Therefore, it must be strongly infeasible.
	
%	Therefore, there for every $\epsilon > 0$, there exists  $x_\epsilon \in \stdCone$ and $y_{\epsilon}$	such that $s_\epsilon \coloneqq c- \stdMap^\T y_{\epsilon}$ satisfies
%	\[
%	s_{\epsilon} \in \spanVec \stdCone, \quad \dist(s_\epsilon, \stdCone) = \norm{x_\epsilon - s_\epsilon} \leq \epsilon. 
%	\]
%	Because $\stdInt \in \reInt \stdCone$, there exists $t_{\epsilon}<0$ such that $\stdInt - \alpha s_{\epsilon} \in \stdCone$. Therefore,
%	\[
%	-\frac{1}{\alpha}\stdInt + s_{\epsilon} \in \stdCone.
%	\]
%	Let $t_{\epsilon} \coloneqq \sup \{t \mid -t\stdInt + s_{\epsilon }\in \stdCone \}$. Because \eqref{eq:dual} is infeasible, we must 
%	have 
%	\[
%		\limsup _{\epsilon \to 0} t_{\epsilon} \leq 0.
%	\]
%	For the sake of obtaining a contradiction suppose that 
%	$_{\epsilon \to 0} t_{\epsilon} < 0$. Therefore, 
%	there exists a sequence $\epsilon _k \to 0$ such that $t_{\epsilon _k}$ converges to 
\end{enumerate}
\end{proof}
\begin{remark}
Item $(iv)$ of Proposition~\ref{prop:feas_d} includes the possibility that 
$\opt{\rm\ref{eq:feas_d}} = -\infty$, i.e., \eqref{eq:feas_d} might be infeasible. 
This happens, for example, when $\stdCone$ is a subspace and $c + \matRange \stdMap^\T$ does not intersect $\stdCone$. However, under the hypothesis that $\stdCone$ is full-dimensional (i.e., $\spanVec \stdCone = \ambSpace$), \eqref{eq:feas_d} must always be feasible, %because $\stdInt \in \interior \stdCone$, 
see Proposition~\ref{prop:feas_d_sf}.
\end{remark}
%Note that as long as we have a way of testing membership in the cones appearing 
%in Step \ref{alg:eps:search}, we do not need to solve any additional CLPs when 
%$\epsilon$ varies.

From Proposition~\ref{prop:feas_d} we see that if \eqref{eq:c_dual} is weakly infeasible, 
we can obtain almost feasible solutions to \eqref{eq:c_dual} by constructing almost optimal solution solutions to \eqref{eq:feas_d}, which can be done through the discussion in Section~\ref{sec:almost} and Algorithm~\ref{alg:eps}. For future 
reference, we register this fact as a proposition.

\begin{proposition}[From almost optimality to almost feasibility]\label{prop:inf_alm}
Suppose that $\epsilon > 0$ and that $(t_{\epsilon},y_{\epsilon})$ is a feasible 
solution to \eqref{eq:feas_d} satisfying $0 \geq t_{\epsilon} \geq -\epsilon$. Then,
\[
\dist(c - \stdMap^\T y _{\epsilon}, \stdCone) \leq \epsilon \norm{\stdInt}.
\]
\end{proposition}
\begin{proof}
Since $(t_{\epsilon},y_{\epsilon})$ is feasible for \eqref{eq:feas_d} we have
$
 c - t_\epsilon \stdInt - \stdMap^\T y_{\epsilon} \in \stdCone.
$
Therefore, 
\[
\dist(c- \stdMap^\T y _{\epsilon}, \stdCone) \leq \norm{c- \stdMap^\T y _{\epsilon} - (c- t_\epsilon \stdInt - \stdMap^\T y _{\epsilon}) } \leq \epsilon \norm{\stdInt}.
\]
\end{proof}
To conclude this section, we present an algorithm for handling 
infeasibility of \eqref{eq:c_dual}, see Algorithm~\ref{alg:inf}. The algorithm is able to distinguish between 
weak and strong infeasibility and, for weakly infeasible problems, it returns 
almost feasible solutions. During the algorithm's run, we will need 
to apply facial reduction to \eqref{eq:feas_p}. For convenience, 
denote by $\minFace{\rm\scriptsize\ref{eq:feas_p}}$ the minimal face of $\stdCone^*$ that contains the feasible region of \eqref{eq:feas_p}. Applying 
facial reduction to \eqref{eq:feas_p} leads to the following pair of problems: 
\begin{multicols}{2}
	\noindent\begin{align}
	\underset{x}{\inf} & \quad \inProd{c}{x} \tag{\^{P}-Feas} \label{eq:feas_p2}\\ 
	\mbox{subject to} & \quad \stdMap x = 0 \nonumber \\ 
	& \quad \inProd{\stdInt}{x} = 1 \nonumber\\ 
	&\quad x \in \minFace{\rm\ref{eq:feas_p}} \nonumber
	\end{align}
	\noindent
	\begin{align}
	\underset{t,y}{\sup} & \quad t  \tag{\^{D}-Feas}\label{eq:feas_d2} \\ 
	\mbox{subject to} & \quad c - t\stdInt - \stdMap ^\T y \in (\minFace{\rm\ref{eq:feas_p}})^* \nonumber.
	\end{align}	
\end{multicols}
\noindent The pair \eqref{eq:feas_p2} and \eqref{eq:feas_d2} satisfy the following property.
\begin{proposition}\label{prop:feas_d_dfr}
Suppose \eqref{eq:feas_d} is feasible and  $\minFace{\rm\scriptsize\ref{eq:feas_p}} \neq  \emptyset$, then the pair \eqref{eq:feas_p2}, \eqref{eq:feas_d2} are both strongly feasible and their common optimal value is equal to $\opt{\ref{eq:feas_d}}$. 
Moreover, if $\minFace{\rm\ref{eq:feas_p}} =  \emptyset$ then \eqref{eq:c_dual} is strongly feasible.
\end{proposition}
\begin{proof}
Under the assumption that \eqref{eq:feas_d} is feasible, \eqref{eq:feas_d} must be, in fact, strongly feasible, by Proposition~\ref{prop:feas_d_sf}. Therefore, the minimal face of $\stdCone$ that contains the feasible region of \eqref{eq:feas_d} is $\stdCone$ itself. That is,
\[
\minFace{\rm\scriptsize\ref{eq:feas_d}} = \stdCone.
\]
Applying Theorem~\ref{theo:double} to \eqref{eq:feas_d} we conclude 
that $\opt{\rm\scriptsize\ref{eq:feas_d}}$ is finite if and only if $\minFace{\rm\scriptsize\ref{eq:feas_p}} \neq  \emptyset$. We also obtain from item~$(i)$ of Theorem~\ref{theo:double} that, if indeed $\minFace{\rm\scriptsize\ref{eq:feas_p}} \neq  \emptyset$ holds, then 
 \eqref{eq:feas_p2}, \eqref{eq:feas_d2} are both strongly feasible and their common optimal value must coincide with $\opt{\rm\scriptsize\ref{eq:feas_d}}$. 
Alternatively, from item~$(ii)$ of Theorem~\ref{theo:double}, we conclude 
that $\minFace{\rm\scriptsize\ref{eq:feas_p}} = \emptyset$ if and only if $\opt{\rm\scriptsize\ref{eq:feas_d}} = +\infty$, in which case \eqref{eq:c_dual} must be strongly feasible by Proposition~\ref{prop:feas_d}.
\end{proof}
%We can now state Algorithm~\ref{alg:inf}.

\begin{algorithm}
	\caption{Determining the infeasibility status of \eqref{eq:c_dual}\label{alg:inf}}
	\KwIn{ $\stdCone, \stdMap, b, c, \epsilon$ (\eqref{eq:c_dual} is assumed to be infeasible)} 
	\KwOut{  \texttt{Weakly Infeasible} or  \texttt{Strongly Infeasible}.
	If  \texttt{Weakly Infeasible} then $y_{\epsilon}$ such 
that $\dist(c - \stdMap^\T y_{\epsilon},\stdCone) \leq \epsilon$ is also returned.} 
	\If{$(c + \matRange\stdMap^\T)\cap \spanVec \stdCone = \emptyset$  \label{alg:inf:c1}}{\Return{} \texttt{Strongly Infeasible}\tcc*[f]{Proposition~\ref{prop:feas_d_sf}}} \label{alg:inf:infe1} 
	Apply Algorithm~\ref{alg:fra} to \eqref{eq:feas_p} and  
	let 	$(f_1,y_1), \ldots, (f_{\ell _2},y_{\ell_2})$ be the obtained corresponding reducing directions. 
	
	\eIf{
		Algorithm~\ref{alg:fra} returned \texttt{Infeasible} (i.e., $\minFace{\rm\scriptsize\ref{eq:feas_p}} = \emptyset$) \label{alg:inf:c2}}{ \Return{} \texttt{Strongly Feasible} \label{alg:inf:infe2}\tcc*[f]{\footnotesize{Proposition~\ref{prop:feas_d_dfr}, impossible by assumption}}}
	{
		Solve the pair \eqref{eq:feas_p2} and \eqref{eq:feas_d2} and denote the optimal value by $\opt{}$ and the optimal solution of \eqref{eq:feas_d2} by $(t^*,y^*)$. \label{alg:inf:o}
		
		\uIf{$\theta < 0$ \label{alg:inf:c3} }{\Return{} \texttt{Strongly Infeasible}\tcc*[f]{{Proposition~\ref{prop:feas_d}}}} \label{alg:inf:infe3}
		\uElseIf{$\theta = 0$ \label{alg:inf:c4}}{
			Let $\hat t, \hat y$ be such that $c -\hat t \stdInt - \stdMap^\T \hat y \in \reInt \stdCone$.\label{alg:inf_ri}\footnote{By Proposition~\ref{prop:feas_d_sf}, $(\hat t, \hat y)$ must exist because if we have reached this line, \eqref{eq:c_dual} is not strongly infeasible. 
%				To compute $(\hat t, \hat y)$, we can start with any $\hat y$ satisfying $c- \stdMap^\T\hat y \in \spanVec \stdCone$ and any negative $\hat t$ and, then, we successively double $\hat t$ until the condition is satisfied. 
			} 	
			
			Apply Algorithm~\ref{alg:eps} to \eqref{eq:feas_d} using as 
			input $(f_1,y_1), \ldots, (f_{\ell _2},y_{\ell_2})$, $(\hat t, \hat y)$, $(t^*,y^*)$, $\epsilon/{\norm{\stdInt}}$. Let $y_{\epsilon}$ be the 
			output of Algorithm~\ref{alg:eps}.
			
			\Return{} $y_{\epsilon}$ and \texttt{Weakly Infeasible}
		
		} 
		
		\ElseIf{$\theta > 0$ }{\Return{} \texttt{Strongly Feasible} \tcc*[f]{\footnotesize{Proposition~\ref{prop:feas_d}, impossible by assumption}}} \label{alg:inf:inf4}
	}
\end{algorithm}
 
\begin{proposition}[Algorithm~\ref{alg:inf} is correct]\label{prop:inf_cor}
The following hold.
\begin{enumerate}[$(i)$]
	\item Assuming that \eqref{eq:c_dual} is infeasible, Algorithm~\ref{alg:inf} correctly identifies whether \eqref{eq:c_dual} is strongly or weakly infeasible.
	\item  When \eqref{eq:c_dual} is weakly infeasible, the output of Algorithm~\ref{alg:inf} is indeed an $\epsilon$-feasible solution.
	\item When $\stdCone = \PSDcone{n}$, Algorithm~\ref{alg:inf} is implementable with  $O(n)$ calls to $\intOracle$. % check this
\end{enumerate}
\end{proposition}
\begin{proof}
The correctness of Algorithm~\ref{alg:inf} follows from Proposition~\ref{prop:feas_d_dfr} and the correctness of 
Algorithm~\ref{alg:eps}. We will now explain some details.

By Propositions~\ref{prop:feas_d} and \ref{prop:feas_d_dfr}, to distinguish between weak and strong infeasibility it is enough to check 
the following three items: whether  \eqref{eq:feas_d} is feasible or not;  whether $\minFace{\ref{eq:feas_p}}$ is empty or not; whether the optimal value of the pair \eqref{eq:feas_p2} and \eqref{eq:feas_d2} is negative or zero. These three items are checked at Lines~\ref{alg:inf:c1}, \ref{alg:inf:c2}, \ref{alg:inf:c3} and \ref{alg:inf:c4} of Algorithm~\ref{alg:inf}.

At Line~\ref{alg:inf:c1}, if $(c + \matRange\stdMap^\T)\cap \spanVec \stdCone = \emptyset$, then the optimal value of \eqref{eq:feas_d} is $-\infty$ and \eqref{eq:c_dual} is strongly infeasible by Proposition~\ref{prop:feas_d_sf}.

However, if we progress until the check of Line~\ref{alg:inf:c2}, \eqref{eq:feas_d} must be strongly feasible, also by Proposition~\ref{prop:feas_d_sf}. By this point, facial reduction is applied to \eqref{eq:feas_p} and if $\minFace{\ref{eq:feas_p}} = \emptyset$, then Proposition~\ref{prop:feas_d_dfr} tells us that \eqref{eq:c_dual} is strongly feasible. As we are assuming that \eqref{eq:c_dual} is infeasible, this should not happen.

If the algorithm reaches Line~\ref{alg:inf:c3} then \eqref{eq:feas_d} is feasible, $\minFace{\ref{eq:feas_p}} \neq \emptyset$ and, therefore, 
Proposition~\ref{prop:feas_d_dfr} applies.
By Proposition~\ref{prop:feas_d}, if $\theta < 0$ it must be the case that \eqref{eq:c_dual} is strongly infeasible. If $\theta = 0$ then \eqref{eq:c_dual} is weakly infeasible and the correctness of Algorithm~\ref{alg:eps} shows that $(t_{\epsilon},y_{\epsilon})$ is indeed an $\epsilon/\norm{\stdInt}$ optimal solution to \eqref{eq:feas_d} which, by Proposition~\ref{prop:inf_alm} implies that $y_{\epsilon}$ must be an $\epsilon$-feasible solution to \eqref{eq:c_dual}.

Finally, suppose that $\stdCone= \PSDcone{n}$. The only lines where SDPs need to be solved are when Algorithm~\ref{alg:fra} is invoked and 
at Line~\ref{alg:inf:o}. Algorithm~\ref{alg:fra} and Algorithm~\ref{alg:eps} require at most $n+1$  calls to $\intOracle$ each so, we only need to check 
that we can indeed solve the SDP at Line~\ref{alg:inf:o} with $\intOracle$. We note that 
if we reach Line~\ref{alg:inf:o}, then \eqref{eq:feas_d} is feasible and 
$\minFace{\ref{eq:feas_p}} \neq  \emptyset$ which, by Proposition~\ref{prop:feas_d_dfr} implies that the pair \eqref{eq:feas_p2} and \eqref{eq:feas_d2} are both strongly feasible and can indeed be solved by $\intOracle$. Therefore, Algorithm~\ref{alg:inf} can be implemented with $O(n)$ calls to $\intOracle$.
\end{proof}
To conclude, we note that, when $\stdCone = \PSDcone{n}$,
 the problem at Line~\ref{alg:inf_ri} can be solved by invoking $\intOracle$ (which would not affect the $O(n)$ complexity of Algorithm~\ref{alg:inf}), but that is not necessary.
Let $\lambda _{\min}(\cdot)$ denote the minimum eigenvalue function and recall $\lambda_{\min}(U+V) \geq \lambda _{\min}(U) + \lambda_{\min} (V)$ always holds for $U,V \in \S^n$.  With that, if $\alpha$ is positive then
 \[
 \alpha > -\frac{\lambda_{\min}(c-\stdMap^\T \hat y)}{\lambda _{\min}(\stdInt)} \Rightarrow c-\stdMap^\T \hat y + \alpha \stdInt \in \PSDcone{n}.
 \]
So, with two minimum eigenvalue computations, we can solve the problem in 
Line~\ref{alg:inf_ri} of Algorithm~\ref{alg:inf}. As in Section~\ref{sec:eps_cost}, a strategy of starting with some negative $t$ and doubling it at each step would also work.

\section{Completely solving {\eqref{eq:c_dual}}
}
\label{sec:comp}
Using the techniques described in Sections~\ref{sec:fr} and \ref{sec:dfr}, we can now present  a general algorithm for completely solving \eqref{eq:c_dual}, in the sense of 
Definition~\ref{def:comp_solve}. In particular, when $\stdCone = \PSDcone{n}$, we can completely solve \eqref{eq:dual} through polynomially many calls to $\intOracle$. For ease of reference, we write down below again some of the auxiliary problems that are referenced in Algorithm~\ref{alg:comp} below.
\begin{multicols}{2}
	\noindent\begin{align}
	\underset{x}{\inf} & \quad \inProd{c}{x} \tag{\^{P}}\\ 
	\mbox{subject to} & \quad \stdMap x = b \nonumber \\ 
	&\quad x \in (\minFaceD)^* \nonumber
	\end{align}
	\noindent
	\begin{align}
	\underset{y}{\sup} & \quad \inProd{b}{y}  \tag{\^{D}} \\ 
	\mbox{subject to} & \quad c - \stdMap ^\T y \in \minFaceD. \nonumber
	\end{align}	
\end{multicols}
\begin{multicols}{2}
	\noindent\begin{align}
	\underset{x}{\inf} & \quad \inProd{c}{x} \tag{{P}*}\\ 
	\mbox{subject to} & \quad \stdMap x = b \nonumber \\ 
	&\quad x \in \minFace{\text{\ref{eq:fr_primal}}} \nonumber
	\end{align}%
	\noindent
	\begin{align}%
	\underset{y}{\sup} & \quad \inProd{b}{y}  \tag{D*}\\ 
	\mbox{subject to} & \quad c - \stdMap ^\T y \in (\minFace{\text{\ref{eq:fr_primal}}})^*.\nonumber
	\end{align}	
\end{multicols}
\noindent Here, we recall that $\minFaceD$ is the minimal face of $\stdCone$ that contains $(c+\matRange\stdMap^\T)\cap \stdCone$ and $\minFace{\text{\ref{eq:fr_primal}}}$ is the minimal face of $(\minFaceD)^*$ that contains the feasible region of \eqref{eq:fr_primal}.
We also recall that, by Theorem~\ref{theo:double}, $\dOpt$ is finite if and only if \eqref{eq:feas_d} is feasible and $\minFace{\text{\ref{eq:fr_primal}}} \neq \emptyset$. In this case, 
\eqref{eq:dfr_primal} and \eqref{eq:dfr_dual} are both strongly feasible 
and, when $\stdCone = \PSDcone{n}$, they can be solved by invoking $\intOracle$. By doing so, we are able to obtain the dual optimal value 
$\dOpt$. Checking whether $\dOpt$ is attained can be done by solving the following feasibility problem. 
	\begin{align}%
\mathrm{ find } & \quad  y  \tag{D-OPT} \label{eq:d_opt}\\ 
\mbox{subject to} & \quad c - \stdMap ^\T y \in \minFaceD, \quad \inProd{b}{y} = \dOpt.\nonumber
\end{align}	
Let $L:\Re^m\to \Re^m$ be an affine map (that is, $L-u$ is linear for some $u \in \Re^m$) such that 
\[
\matRange L = \{y\mid \inProd{b}{y} = \dOpt \}
\]
In particular,  $\inProd{b}{L(\hat y)} = \dOpt$ holds for every $\hat y$. With that we can put \eqref{eq:d_opt} in ``dual standard format'' as follows
	\begin{align}%
\mathrm{ find } & \quad  \hat y  \tag{D-OPT-STD} \label{eq:d_opt_std}\\ 
\mbox{subject to} & \quad c - \stdMap ^\T (L(\hat y)) \in \minFaceD. \nonumber
\end{align}	
We observe that \eqref{eq:d_opt} is feasible if and only if 
\eqref{eq:d_opt_std} is feasible\footnote{If $y$ is feasible for 
\eqref{eq:d_opt}, then $y \in \matRange L$, so there exists $\hat y$ such that $L(\hat y)=y$. Reciprocally, if $\hat y$ is feasible for 
\eqref{eq:d_opt_std} then $L\hat y$ is feasible for \eqref{eq:d_opt}.}.
Once \eqref{eq:d_opt_std} is solved (for example, with Algorithm~\ref{alg:fra}) and a solution $\hat y^*$ is obtained, a solution to \eqref{eq:d_opt} is obtained by letting $y^* = L\hat y^*$.
Of course, it might be the case \eqref{eq:d_opt} is not feasible in the first place. Nevertheless, we now have all pieces in place, see Algorithm~\ref{alg:comp}.

\begin{algorithm}
	\caption{Completely Solving \eqref{eq:c_dual}\label{alg:comp}}
	\KwIn{ $\stdCone, \stdMap, b, c, \epsilon$} 
	Apply Algorithm~\ref{alg:fra} to \eqref{eq:c_dual} and let 
	$d_1, \ldots, d_{\ell _1}$ be the corresponding reducing directions. \label{alg:comp:fra}\\
	\eIf{Algorithm~\ref{alg:fra} returned \texttt{Infeasible}}{
		Invoke Algorithm~\ref{alg:inf}, \Return{} its outputs.
		
	}
	{
		Let $\hat s$ be such that  $\hat s \in \reInt \minFaceD$ (see output 2. of Algorithm \ref{alg:fra}) and 
		$\hat y$ such that $c - \stdMap^\T \hat y = \hat s$.
		
		Apply Algorithm \ref{alg:fra} to \eqref{eq:fr_primal}, obtain reducing directions 
		$(f_1,y_1), \ldots, (f_{\ell _2}, y_{\ell _2})$. \label{alg:comp_fra3} 
		
		\eIf{ $\minFace{\text{\ref{eq:fr_primal}}} = \emptyset$ \label{alg:com:unb}}{
			\Return{}  $\hat s$ and \texttt{Feasible Unbounded}.
		}
		{
			Solve \eqref{eq:dfr_primal} and \eqref{eq:dfr_dual} and
			 obtain $\dOpt$ and optimal solutions $y^*,x^*$ to \eqref{eq:p_min_d} and \eqref{eq:p_min},
			respectively. \label{alg:comp_o}\\
			Apply Algorithm~\ref{alg:fra} to \eqref{eq:d_opt_std}. \label{alg:comp_fra2}
			
			\eIf{Algorithm~\ref{alg:fra} returned \texttt{Infeasible}}{ 
				Use Algorithm~\ref{alg:eps} with $f_1, \ldots, f_{\ell _1},\hat y, y^*, \epsilon$ as inputs and \Return{} $y_\epsilon$ (the output of Algorithm~\ref{alg:eps}), $\dOpt$ and \texttt{Feasible Unattained}.
			}
			{
				Let $(y,s)$ be the feasible solution returned by Algorithm~\ref{alg:fra}.	
				
				\Return{} $y$, $\dOpt$ and \texttt{Feasible Attained}. 
			}
		}
	}
\end{algorithm}

\begin{theorem}[Algorithm~\ref{alg:comp} is correct]\label{theo:comp_correct}
	Algorithm~\ref{alg:comp}  completely solves \eqref{eq:c_dual}. That is, it correctly determines whether 	\eqref{eq:c_dual} is feasible or not. If \eqref{eq:c_dual} is infeasible, Algorithm~\ref{alg:comp} distinguishes between weak and strong infeasibility and, in case of weak infeasibility, an $\epsilon$-feasible solution is returned.
	If \eqref{eq:c_dual} is feasible, Algorithm~\ref{alg:comp} computes the optimal value of
	\eqref{eq:c_dual}. If the optimal value is finite and attained, an optimal solution is returned. If the optimal value is finite but not attained, an $\epsilon$-optimal solution is returned. If the optimal value is $+\infty$, a feasible solution is returned.
	
	In addition, if $\stdCone = \PSDcone{n}$, then \eqref{eq:c_dual} can be implemented with $O(n)$ calls to the oracle $\intOracle$.
\end{theorem}
\begin{proof}
	To prove the result we gather everything we have done so far.	We consider the following cases.
	\begin{enumerate}
		\item \emph{\eqref{eq:c_dual} is infeasible}. The correctness of Facial Reduction and Algorithm \ref{alg:fra} implies that if \eqref{eq:c_dual} is infeasible, 
		then this will be correctly detected after Line~\ref{alg:comp:fra}.
		Furthermore, the correctness of Algorithm~\ref{alg:inf} (Proposition~\ref{prop:inf_cor}) ensures 
		that weak infeasibility and strong infeasibility will be correctly detected. And, in case of weak infeasibility, an $\epsilon$-feasible solution will be returned.
		
		\item \emph{\eqref{eq:c_dual} is feasible but unbounded}. 
		If the algorithm advances until Line~\ref{alg:com:unb}, it is because \eqref{eq:c_dual} is feasible and, in particular, $\minFaceD$ is not empty. In this case, we are under 
		the hypothesis of Theorem~\ref{theo:double}.
		By item $(ii)$ of Theorem \ref{theo:double}, we have $\dOpt = +\infty$ if and only if 
		$\minFace{\text{\ref{eq:fr_primal}}} = \emptyset$.
		\item  \emph{\eqref{eq:c_dual} is feasible, $\dOpt$ is finite but not attained.} In that case, when Algorithm~\ref{alg:fra} is invoked at Line~\ref{alg:comp_fra2},  it is correctly detected that \eqref{eq:d_opt_std} is infeasible and Algorithm \ref{alg:eps} correctly constructs 
		an $\epsilon$-optimal solution.
		\item  \emph{\eqref{eq:c_dual} is feasible, $\dOpt$ is finite and attained.} In that case, when Algorithm~\ref{alg:fra} is invoked at Line~\ref{alg:comp_fra2}, a feasible solution to \eqref{eq:d_opt_std} will be obtained, which corresponds to an optimal solution to \eqref{eq:c_dual}.
	\end{enumerate}
For the last part of the proof, suppose that $\stdCone = \PSDcone{n}$.
Algorithm~\ref{alg:comp} directly invokes facial reduction (Algorithm~\ref{alg:fra}) at most 3 times (Lines~\ref{alg:comp:fra}, \ref{alg:comp_fra3} and \ref{alg:comp_fra2}). It also directly invokes Algorithm~\ref{alg:eps} and Algorithm~\ref{alg:inf} at most one time, each. 
The only other time where SDPs need to be solved 
is at Line~\ref{alg:comp_o} where  we need to solve the SDPs \eqref{eq:dfr_primal} and \eqref{eq:dfr_dual}, which are both strongly feasible (item $(i)$ of  Theorem~\ref{theo:double}) and therefore can be solved by a single call to $\intOracle$. 
By Propositions~\ref{prop:fra_cost}, \ref{prop:inf_cor} and the discussion in Section~\ref{sec:eps_cost}  we have that Algorithms~\ref{alg:fra}, \ref{alg:eps} and \ref{alg:inf} can also be implemented with $O(n)$ calls to $\intOracle$, so the same must be true of Algorithm~\ref{alg:comp}.
\end{proof}
See Appendix~\ref{app:ex} for an example where Algorithm~\ref{alg:comp} is applied to an instance that has a finite nonzero duality gap and unattainment at both primal and dual sides.

\section{Comparison with other approaches}\label{sec:further}
One of the features of this work is the interior point oracle $\intOracle$ and its application to the complete solvability of semidefinite programs.
Related to this task, the usage of reducing directions to the  construction of almost optimal solutions and almost feasible solutions for general conic linear programs seems to be new, although some of those ideas were present in our previous works on semidefinite programming~\cite{lourenco_muramatsu_tsuchiya} and second order cone programming~\cite{LMT16}. 
Technical results on facial reduction and double facial reduction such as Proposition~\ref{prop:feas_changes}, Theorems~\ref{theo:min_changes} and \ref{theo:double} seem to be novel as well.
Nevertheless, the idea of completely solving a problem in some sense is not necessarily new and, in this section, we compare our approach with other proposals in the literature that had similar goals.

In section 5.10 of \cite{KTR00}, de Klerk, Terlaky and Roos have  described 
 a possible sequence of steps to  solve \eqref{eq:dual}.
Their tool of choice is a self-dual embedding strategy of the original pair \eqref{eq:primal} 
and \eqref{eq:dual}. As we mentioned before, in the absence of both primal and dual strong feasibility, 
the embedded problem might fail to reveal the optimal value of the original problem or detect infeasibility/nonattainment. 
To account for that, they go for a second step, where they consider an embedded problem using Ramana's dual. The Ramana's dual $(P_R)$
is a substitute for \eqref{eq:primal} and they consider the pair formed by $(P_R)$ and its 
dual $(D_\text{cor})$, which is a ``corrected'' version of $\eqref{eq:dual}$. The pair $(P_R,D_\text{cor})$ can 
then be solved by their embedding strategy to find $\dOpt$. As the embedded problem is both primal and dual 
strongly feasible, it is possible to invoke  $\intOracle$ to solve it. However, if the 
solution given by  $\intOracle$ is not of maximum rank at both steps, their strategy might not work. 
We should mention that they do show in detail how to build an interior 
point method suitable for their approach.  Our analysis, on the other hand, is completely agnostic to the inner 
workings of the interior point oracle and no assumption is made on the optimal solutions 
returned by  $\intOracle$. 

%Nevertheless, missing from their analysis is how one can 
%recover a solution to \eqref{eq:dual} from a solution $(D_\text{cor})$ or to check that this is not possible. 
%Moreover, it is not clear from their approach how to obtain points close to optimality in case of unattainment, or 
%points close to feasibility in case of weak infeasibility. 
As our approach does not rely on Ramana's dual, our analysis
is easily generalizable to other classes of cones. Indeed, Algorithm~\ref{alg:comp} is valid and correct for any closed convex cone $\stdCone$. We remark that although there is a strong 
connection between Ramana's dual and facial reduction \cite{ramana_strong_1997,pataki_strong_2013}, no 
similar construction is known for any other class of cones. For example, following Pataki's approach 
in \cite{pataki_strong_2013}, one could formulate an alternative dual system for a second order cone programming problem.
Such a system would have many of the properties that Ramana's dual has, but it is not clear whether that system 
can be expressed via second order cone constraints.

Permenter, Friberg and Andersen present in \cite{PFA17} a very elegant approach for general conic linear programming based on self-dual embeddings and they are able to  achieve most of the goals included in Definition~\ref{def:comp_solve}. They showed that the relative interior of the set of solutions to a certain self-dual embedding of the pair \eqref{eq:c_primal} and \eqref{eq:c_dual} will reveal reducing directions for \eqref{eq:c_primal} and \eqref{eq:c_dual} under certain circumstances, 
see Corollary 3.3 in \cite{PFA17}. They used this property to present an algorithm for solving \eqref{eq:c_primal} and \eqref{eq:c_dual} while identifying several pathologies, see Algorithms~1 and 2 in \cite{PFA17}. 

We remark that even if \eqref{eq:c_primal} and/or \eqref{eq:c_dual} are not strongly feasible, it could still be the case that the duality gap is zero and both problems are attained. In this case, certain self-dual embeddings might recover optimal solutions to the pair \eqref{eq:c_primal}, \eqref{eq:c_dual} even in the absence of strong feasibility. Indeed, a crucial advantage of the approach in \cite{PFA17} is that a facial reducing step is performed only if it strictly necessary in order to recover zero duality gap and attainment, see item 1.~of Theorem~4.1 therein. As such, the approach in \cite{PFA17} regularizes a problem only if needed.

However, the main drawback in \cite{PFA17} seems to be the fact that it requires a \emph{relative interior} solution to their self-dual embedding, which is a stronger requirement than our assumption 
of having access to $\intOracle$, since $\intOracle$ is allowed to return any optimal solution.
While relative interior optimal solutions might be obtainable via interior point algorithms, this is not necessarily true for other methods.
Nevertheless, although our algorithm is more general, 
the approach in \cite{PFA17} seems to be more likely to lead to a practical implementation than ours, especially in conjunction with interior point methods. Indeed, the numerical experiments 
in Section~5 of \cite{PFA17} suggest that even when reducing directions are computed 
inexactly there are cases where they are still useful for analyzing the problem, although sometimes these approximate directions can also lead to 
incorrect conclusions.
  
Finally, we remark on  the difference between the double reformulation proposed by Pataki \cite{P18} and our double facial reduction of Section~\ref{sec:dfr}.
In Definition~1 of \cite{P18}, Pataki defined that a reformulation of \eqref{eq:primal} and \eqref{eq:dual} corresponds to the SDP primal and dual pair obtained by applying certain elementary operations to \eqref{eq:primal} and \eqref{eq:dual}. These elementary operations preserve the properties of the original problem such as duality gaps and whether the optimal value is attained or not. In simplified terms, Pataki showed in Theorem~4 of \cite{P18} that \eqref{eq:dual} can be ``doubly  reformulated'' as
	\begin{align}
	\underset{y}{\sup} & \quad \inProd{b'}{y} \label{eq:double_dual} \tag{SDP-Ref} \\ 
	\mbox{subject to} & \quad c' - \sum _{i=1}^m \stdMap _i' y_i \in \PSDcone{n} , \nonumber
	\end{align}
	where $\stdMap_i \in \S^n$ for all $i$ and
	in such a way that $c'$ belongs to  $\reInt \minFaceD$. Furthermore, 
	for some $\ell$, $(c',\stdMap_1',\ldots, \stdMap _\ell')$ can be used to obtain the minimal face associated to the so-called ``homogeneous dual'' of \eqref{eq:double_dual}.
	So, this double reformulation, in a sense, reveal both the minimal face of \eqref{eq:double_dual} and the minimal face associated to a homogenized version  of 
	the corresponding dual problem of \eqref{eq:double_dual}.
	
	As far as we could see, the homogeneous dual of \eqref{eq:double_dual} is related but it is quite different from either \eqref{eq:fr_primal} or \eqref{eq:dfr_primal} even when $\stdCone=\PSDcone{n}$. 
	We note that the homogeneous dual in Theorem~4 of \cite{P18} is  computed with respect the cone $\PSDcone{n}$ and not with respect the minimal face associated to 
	\eqref{eq:double_dual}. Therefore, the closest analogous of the double reformulation in our setting would be if we applied the second facial reduction to  \eqref{eq:c_primal} instead of applying to \eqref{eq:fr_primal}.
	In conclusion, it seems to us that double facial reduction and Pataki's double reformulation serve different purposes.

\section{Concluding remarks}\label{sec:conc}
In this paper, we have discussed how to use facial reduction and double facial reduction to completely solving (Definition~\ref{def:comp_solve}) a general conic linear program, under the assumption that certain auxiliary problems can be solved, see Algorithm~\ref{alg:comp} and Theorem~\ref{theo:comp_correct}.
When specialized to the particular case of semidefinite programming, these results imply that an arbitrary semidefinite program over $n\times n$ matrices can be completely solved by invoking at most $O(n)$ times an oracle that only return solutions to primal and dual strongly feasible SDPs. 
We also provided technical results on facial reduction and double facial reduction that might be of independent interest, see Sections~\ref{sec:fr} and \ref{sec:dfr}.

For limitations, drawbacks and comparison to other approaches, see Sections~\ref{sec:lim} and \ref{sec:further}. In particular, as discussed in Section~\ref{sec:lim}, in our analysis we assumed that the oracle $\intOracle$ returns an exact solution. An interesting topic of future research would be to consider the effects of impreciseness in the solutions returned by $\intOracle$.

\section*{Acknowledgements}
We thank the anonymous referee for the several helpful suggestions and comments.
We also thank Prof.~Henry Wolkowicz for bringing to our attention the paper by Abrams~\cite{Abrams75} and Prof.~G\'abor Pataki for helpful feedback.
The first author would also like to acknowledge the support and the interesting discussions with Prof.~Mituhiro Fukuda 
at the Tokyo Institute of Technology. 
Finally, this paper is dedicated to Professor Masao Iri with deep appreciation to all what he did and left for the authors, directly and indirectly.

\bibliographystyle{abbrvurl}
\bibliography{bib}

\begin{thebibliography}{10}

\bibitem{Abrams75}
R.~A. Abrams.
\newblock Projections of convex programs with unattained infima.
\newblock {\em SIAM Journal on Control}, 13(3):706--718, 1975.

\bibitem{ali-95}
F.~Alizadeh.
\newblock Interior point methods in semidefinite programming with applications
  to combinatorial optimization.
\newblock {\em {SIAM J. Optim.}}, 5(1):13--51, 1995.

\bibitem{BC75}
G.~P. Barker and D.~Carlson.
\newblock Cones of diagonally dominant matrices.
\newblock {\em Pacific Journal of Mathematics}, 57(1):15--32, 1975.

\bibitem{borwein_facial_1981}
J.~M. Borwein and H.~Wolkowicz.
\newblock Facial reduction for a cone-convex programming problem.
\newblock {\em Journal of the Australian Mathematical Society (Series A)},
  30(03):369--380, 1981.

\bibitem{Borwein1981495}
J.~M. Borwein and H.~Wolkowicz.
\newblock Regularizing the abstract convex program.
\newblock {\em Journal of Mathematical Analysis and Applications}, 83(2):495 --
  530, 1981.

\bibitem{csw13}
Y.-L. Cheung, S.~Schurr, and H.~Wolkowicz.
\newblock Preprocessing and regularization for degenerate semidefinite
  programs.
\newblock In {\em Computational and Analytical Mathematics}, volume~50 of {\em
  Springer Proceedings in Mathematics \& Statistics}, pages 251--303. Springer
  New York, 2013.

\bibitem{KTR00}
E.~de~Klerk, T.~Terlaky, and K.~Roos.
\newblock Self-dual embeddings.
\newblock In H.~Wolkowicz, R.~Saigal, and L.~Vandenberghe, editors, {\em
  Handbook of semidefinite programming: theory, algorithms, and applications},
  chapter~5. Kluwer Academic Publishers, 2000.

\bibitem{Fr16}
H.~A. Friberg.
\newblock A relaxed-certificate facial reduction algorithm based on subspace
  intersection.
\newblock {\em Operations Research Letters}, 44(6):718 -- 722, 2016.

\bibitem{SDPAReport}
K.~Fujisawa, M.~Fukuda, K.~Kobayashi, M.~Kojima, K.~Nakata, M.~Nakata, and
  M.~Yamashita.
\newblock {SDPA} ({S}emi{D}efinite {P}rogramming {A}lgorithm) and {SDPA-GMP}
  {U}ser’s {M}anual — version 7.1.1.
\newblock Technical Report B-448, Department of Mathematical and Computing
  Sciences, Tokyo Institute of Technology, 2008.

\bibitem{HND16}
D.~Henrion, S.~Naldi, and M.~S.~E. Din.
\newblock Exact algorithms for linear matrix inequalities.
\newblock {\em SIAM Journal on Optimization}, 26(4):2512--2539, 2016.

\bibitem{HND19}
D.~Henrion, S.~Naldi, and M.~S.~E. Din.
\newblock {SPECTRA} – a {Maple} library for solving linear matrix
  inequalities in exact arithmetic.
\newblock {\em Optimization Methods and Software}, 34(1):62--78, 2019.

\bibitem{IL17}
M.~Ito and B.~F. Louren\c{c}o.
\newblock A bound on the {C}arathéodory number.
\newblock {\em Linear Algebra and its Applications}, 532:347 -- 363, 2017.

\bibitem{KT19}
M.~Karimi and L.~Tunçel.
\newblock {D}omain-{D}riven {S}olver ({DDS}): a matlab-based software package
  for convex optimization problems in domain-driven form.
\newblock 2019.
\newblock URL: \url{https://arxiv.org/abs/1908.03075}, \href
  {http://arxiv.org/abs/1908.03075} {\path{arXiv:1908.03075}}.

\bibitem{KT19_2}
M.~Karimi and L.~Tunçel.
\newblock Status determination by interior-point methods for convex
  optimization problems in domain-driven form.
\newblock 2019.
\newblock URL: \url{https://arxiv.org/abs/1901.07084}, \href
  {http://arxiv.org/abs/1901.07084} {\path{arXiv:1901.07084}}.

\bibitem{KT20}
M.~Karimi and L.~Tunçel.
\newblock Primal–dual interior-point methods for domain-driven formulations.
\newblock {\em Mathematics of Operations Research}, 45(2):591--621, 2020.

\bibitem{klep_exact_2013}
I.~Klep and M.~Schweighofer.
\newblock An exact duality theory for semidefinite programming based on sums of
  squares.
\newblock {\em Math. Oper. Res.}, 38(3):569–590, Aug. 2013.

\bibitem{LLP20}
S.~B. Lindstrom, B.~F. Lourenço, and T.~K. Pong.
\newblock Error bounds, facial residual functions and applications to the
  exponential cone.
\newblock {\em ArXiv e-prints}, 2020.
\newblock URL: \url{https://arxiv.org/abs/2010.16391}, \href
  {http://arxiv.org/abs/2010.16391} {\path{arXiv:2010.16391}}.

\bibitem{LP15_2}
M.~Liu and G.~Pataki.
\newblock Exact duality in semidefinite programming based on elementary
  reformulations.
\newblock {\em SIAM Journal on Optimization}, 25(3):1441--1454, 2015.

\bibitem{LP15}
M.~Liu and G.~Pataki.
\newblock Exact duals and short certificates of infeasibility and weak
  infeasibility in conic linear programming.
\newblock {\em Mathematical Programming}, 167(2):435--480, Feb 2018.

\bibitem{L17}
B.~F. Louren\c{c}o.
\newblock {Amenable cones: error bounds without constraint qualifications}.
\newblock {\em ArXiv e-prints, to appear in Mathematical Programming}, 2017.
\newblock URL: \url{https://arxiv.org/abs/1712.06221}, \href
  {http://arxiv.org/abs/1712.06221} {\path{arXiv:1712.06221}}.

\bibitem{lourenco_muramatsu_tsuchiya3}
B.~F. Louren\c{c}o, M.~Muramatsu, and T.~Tsuchiya.
\newblock Solving {SDP} completely with an interior point oracle.
\newblock {\em arXiv e-prints}, July 2015.
\newblock URL: \url{https://arxiv.org/abs/1507.08065v2}, \href
  {http://arxiv.org/abs/1507.08065} {\path{arXiv:1507.08065}}.

\bibitem{lourenco_muramatsu_tsuchiya}
B.~F. Louren\c{c}o, M.~Muramatsu, and T.~Tsuchiya.
\newblock A structural geometrical analysis of weakly infeasible {SDP}s.
\newblock {\em Journal of the Operations Research Society of Japan},
  59(3):241--257, 2016.
\newblock URL:
  \url{http://www.orsj.or.jp/~archive/pdf/e_mag/Vol.59_03_241.pdf}.

\bibitem{LMT18}
B.~F. Louren\c{c}o, M.~Muramatsu, and T.~Tsuchiya.
\newblock Facial reduction and partial polyhedrality.
\newblock {\em SIAM Journal on Optimization}, 28(3):2304--2326, 2018.

\bibitem{LMT16}
B.~F. Louren{\c{c}}o, M.~Muramatsu, and T.~Tsuchiya.
\newblock Weak infeasibility in second order cone programming.
\newblock {\em Optimization Letters}, 10(8):1743--1755, Dec 2016.

\bibitem{Luo97dualityresults}
Z.~Luo, J.~F. Sturm, and S.~Zhang.
\newblock Duality results for conic convex programming.
\newblock Technical report, Econometric Institute, Erasmus University
  Rotterdam, The Netherlands, 1997.

\bibitem{LSZ00}
Z.~Luo, J.~F. Sturm, and S.~Zhang.
\newblock Conic convex programming and self-dual embedding.
\newblock {\em Optimization Methods and Software}, 14(3):169--218, 2000.

\bibitem{ZPT17}
Y.~(Melody)Zhu, G.~{Pataki}, and Q.~Tran-Dinh.
\newblock {Sieve-SDP: a simple facial reduction algorithm to preprocess
  semidefinite programs}.
\newblock {\em Mathematical Programming Computation}, 11(3):503--586, Sep 2019.

\bibitem{M05}
M.~Muramatsu.
\newblock A unified class of directly solvable semidefinite programming
  problems.
\newblock {\em Annals of Operations Research}, 133(1):85--97, Jan 2005.

\bibitem{NN94}
Y.~Nesterov and A.~Nemirovskii.
\newblock {\em Interior-Point Polynomial Algorithms in Convex Programming}.
\newblock Society for Industrial and Applied Mathematics, 1994.

\bibitem{nesterov_infeasible}
Y.~Nesterov, M.~J. Todd, and Y.~Ye.
\newblock Infeasible-start primal-dual methods and infeasibility detectors for
  nonlinear programming problems.
\newblock {\em Mathematical Programming}, 84(2):227--267, Feb. 1999.

\bibitem{pataki_handbook}
G.~Pataki.
\newblock The geometry of semidefinite programming.
\newblock In H.~Wolkowicz, R.~Saigal, and L.~Vandenberghe, editors, {\em
  Handbook of semidefinite programming: theory, algorithms, and applications}.
  Kluwer Academic Publishers, 2000.

\bibitem{pataki_strong_2013}
G.~Pataki.
\newblock Strong duality in conic linear programming: Facial reduction and
  extended duals.
\newblock In {\em Computational and Analytical Mathematics}, volume~50, pages
  613--634. Springer New York, 2013.

\bibitem{pataki_bad_sdps}
G.~Pataki.
\newblock Bad semidefinite programs: They all look the same.
\newblock {\em SIAM Journal on Optimization}, 27(1):146--172, 2017.

\bibitem{P18}
G.~Pataki.
\newblock {On positive duality gaps in semidefinite programming}.
\newblock {\em ArXiv e-prints}, 2018.
\newblock URL: \url{https://arxiv.org/abs/1812.11796}, \href
  {http://arxiv.org/abs/1812.11796} {\path{arXiv:1812.11796}}.

\bibitem{P17}
G.~Pataki.
\newblock Characterizing bad semidefinite programs: Normal forms and short
  proofs.
\newblock {\em SIAM Review}, 61(4):839--859, 2019.

\bibitem{PFA17}
F.~Permenter, H.~A. Friberg, and E.~D. Andersen.
\newblock Solving conic optimization problems via self-dual embedding and
  facial reduction: A unified approach.
\newblock {\em SIAM Journal on Optimization}, 27(3):1257--1282, 2017.

\bibitem{PP14}
F.~Permenter and P.~Parrilo.
\newblock Partial facial reduction: simplified, equivalent {SDPs} via
  approximations of the {PSD} cone.
\newblock {\em Mathematical Programming}, Jun 2017.

\bibitem{polik_new_2009}
I.~P\'olik and T.~Terlaky.
\newblock New stopping criteria for detecting infeasibility in conic
  optimization.
\newblock {\em Optimization Letters}, 3(2):187--198, Mar. 2009.

\bibitem{PK97}
L.~Porkolab and L.~Khachiyan.
\newblock On the complexity of semidefinite programs.
\newblock {\em Journal of Global Optimization}, 10:351--365, 1997.

\bibitem{PS98}
F.~A. Potra and R.~Sheng.
\newblock On homogeneous interior-point algorithms for semidefinite
  programming.
\newblock {\em Optimization Methods and Software}, 9(1-3):161--184, 1998.

\bibitem{Ramana95anexact}
M.~V. Ramana.
\newblock An exact duality theory for semidefinite programming and its
  complexity implications.
\newblock {\em Mathematical Programming}, 77, 1995.

\bibitem{ramana_strong_1997}
M.~V. Ramana, L.~Tunçel, and H.~Wolkowicz.
\newblock Strong duality for semidefinite programming.
\newblock {\em {SIAM} Journal on Optimization}, 7(3):641--662, Aug. 1997.

\bibitem{rockafellar}
R.~T. Rockafellar.
\newblock {\em {Convex Analysis}}.
\newblock Princeton University Press, 1997.

\bibitem{Sturm1999}
J.~F. Sturm.
\newblock Using sedumi 1.02, a matlab toolbox for optimization over symmetric
  cones.
\newblock {\em Optimization Methods and Software}, 11(1-4):625--653, 1999.

\bibitem{sturm_error_2000}
J.~F. Sturm.
\newblock Error bounds for linear matrix inequalities.
\newblock {\em {SIAM} Journal on Optimization}, 10(4):1228--1248, Jan. 2000.

\bibitem{Tam85}
B.-S. Tam.
\newblock On the duality operator of a convex cone.
\newblock {\em Linear Algebra and its Applications}, 64:33 -- 56, 1985.

\bibitem{sdpt3}
K.-C. Toh, M.~J. Todd, and R.~Tütüncü.
\newblock On the implementation and usage of {SDPT3} – a {Matlab} software
  package for semidefinite-quadratic-linear programming, version 4.0.
\newblock In M.~F. Anjos and J.~B. Lasserre, editors, {\em Handbook on
  Semidefinite, Conic and Polynomial Optimization}, pages 715--754. Springer
  US, 2012.

\bibitem{TW12}
L.~Tunçel and H.~Wolkowicz.
\newblock Strong duality and minimal representations for cone optimization.
\newblock {\em Computational Optimization and Applications}, 53(2):619--648,
  2012.

\bibitem{VY95}
R.~J. Vanderbei and B.~Yang.
\newblock The simplest semidefinite programs are trivial.
\newblock {\em Mathematics of Operations Research}, 20(3):590--596, 1995.

\bibitem{waki_how_2012}
H.~Waki.
\newblock How to generate weakly infeasible semidefinite programs via
  {L}asserre’s relaxations for polynomial optimization.
\newblock {\em Optimization Letters}, 6(8):1883--1896, Dec. 2012.

\bibitem{article_waki_muramatsu}
H.~Waki and M.~Muramatsu.
\newblock Facial reduction algorithms for conic optimization problems.
\newblock {\em Journal of Optimization Theory and Applications},
  158(1):188--215, 2013.

\bibitem{WNM12}
H.~Waki, M.~Nakata, and M.~Muramatsu.
\newblock Strange behaviors of interior-point methods for solving semidefinite
  programming problems in polynomial optimization.
\newblock {\em Computational Optimization and Applications}, 53(3):823--844,
  2012.

\bibitem{W96}
H.~Wolkowicz.
\newblock Explicit solutions for interval semidefinite linear programs.
\newblock {\em Linear Algebra and its Applications}, 236:95 -- 104, 1996.

\end{thebibliography}

\appendix
\section{The proof of Proposition~\ref{prop:refor}}\label{app:refor}
In this appendix we discuss and prove Proposition~\ref{prop:refor}, which we restate below.
\begin{proposition*}
Let $\stdCone$ be as in \eqref{eq:cone_ref} with some orthogonal matrix $R$, some $r \leq n$, and
some linear subspace $\stdSpace \subseteq \S^n$ such that $\stdSpace\subseteq (\PSDcone{r,n})^\perp$.
Suppose that  \eqref{eq:c_primal} and \eqref{eq:c_dual} are strongly feasible. 	Then, \eqref{eq:c_primal} and \eqref{eq:c_dual} are solvable 
with a single call to $\intOracle$.
\end{proposition*}
\begin{proof}
	In what follows, let $\stdMap_1,\ldots, \stdMap  _m \in \S^n$  be such that 
	\[
	c - \stdMap^\T y = c - \sum _{i=1}^m \stdMap_i y_i, \text{ and } \quad \stdMap x = b \Leftrightarrow \inProd{\stdMap _i }{x} = b_i, i = 1,\ldots, m.
	\]
	
	Since $\stdCone$ is as in \eqref{eq:cone_ref}, we have
	\begin{align}
	R\stdCone R^\Tr = \PSDcone{r,n} \oplus \stdSpace,&\quad \reInt (R\stdCone R^\Tr) = (\reInt \PSDcone{r,n}) \oplus \stdSpace,  \label{eq:ref_cone}\\
	(R\stdCone R^\Tr)^* = \PSDcone{r,n} \oplus (\stdSpace^\perp \cap (\PSDcone{r,n})^\perp),& \quad \reInt (R\stdCone R^\Tr)^*= (\reInt \PSDcone{r,n}) \oplus (\stdSpace^\perp \cap (\PSDcone{r,n})^\perp). \label{eq:ref_cone_d}
	\end{align}
	Furthermore, 
	\begin{equation}\label{eq:face_ri}
	\reInt \PSDcone{r,n} =  \left\{ \begin{pmatrix}U & 0 \\ 0 & 0 \end{pmatrix}  \in \S^n \relmiddle{\vert} U \in \PDcone{r} \right\},
	\end{equation}
	where $\PDcone{r}$ denotes the set of $r\times r$ real symmetric positive definite matrices.
		Let $\S^{r,n}$ denote the span of $\PSDcone{r,n}$.
	The proof is divided in four cases and we will show that each case leads back 
	to the previous one.
		
	\noindent\fbox{\textbf{Case 1.} $\stdSpace = (\S^{r,n})^\perp$, i.e., $\stdCone = (\S^{r,n}_+)^*$. $R$ is the identity matrix.}
	
	Let $\pi_{r}:\S^{n} \to \S^r$ be the orthogonal projection
	that maps $x \in \S^n$ on its upper left $r\times r$ block.
	We can reformulate \eqref{eq:c_primal} and \eqref{eq:c_dual} as follows	
	\begin{multicols}{2}
		\noindent\begin{align}
		\underset{ \hat x}{\inf} & \quad \inProd{\pi_r( c)}{\hat x} \label{eq:ref_p} \tag{P}\\ 
		\mbox{subject to} & \quad   \inProd{\pi_r ( \stdMap _i)}{ \hat x } =  b_i, \quad i = 1,\ldots, m\nonumber \\ 
		&\quad  \hat x \in \PSDcone{r} \nonumber
		\end{align}
		\noindent
		\begin{align}
		\underset{ y}{\sup} & \quad \inProd{ b}{ y}  \label{eq:ref_d} \tag{D} \\ 
		\mbox{subject to} & \quad \pi_r( c) - \sum _{i=1}^m \pi_r ( \stdMap _i)  y_i \in \PSDcone{r}. \nonumber
		\end{align}	
	\end{multicols}
	We note that $s \in \stdCone$, if and only if the upper-left $r\times r$ block of $s$ is positive semidefinite. Therefore, \eqref{eq:ref_d} and \eqref{eq:c_dual} are equivalent in the sense that they share the same feasible solutions and have the same optimal value. From \eqref{eq:ref_cone}, we see that \eqref{eq:c_dual} is strongly feasible if and only if \eqref{eq:ref_d} is strongly feasible.
	
	Similarly, $x \in \stdCone^*$ if and only the upper-left $r \times r$ block of $x$ is positive semidefinite and the other entries are zero. Therefore,
	\[
	\inProd{A}{ x } =  \inProd{\pi_{r}(A)}{\pi _r(x)},\quad \forall A \in \S^n.
	\]
	Accordingly, if $x$ is a feasible solution to \eqref{eq:c_primal} such that $x \in \reInt \stdCone^*$, then $\pi_r(x) \in \reInt \PSDcone{r}$ and $\pi _r(x)$ is feasible for \eqref{eq:ref_p}. Conversely, 
	if $\hat x$ is feasible for \eqref{eq:ref_p}, then $\pi_{r}^*(\hat x)$ is feasible for \eqref{eq:ref_p}.
	Furthermore, $\hat x \in \PSDcone{r,n} \Leftrightarrow\pi_{r}^*(\hat x) \in \stdCone^*$, where $\pi_{r}^*$ is the adjoint of $\pi_r$.
	We conclude that  \eqref{eq:c_primal} is strongly feasible if and only if \eqref{eq:ref_p} is strongly feasible. Also, both problems have the same optimal values.

	Therefore, if \eqref{eq:c_dual} and \eqref{eq:c_primal} are both strongly feasible, the same is true 
	for the pair \eqref{eq:ref_p} and \eqref{eq:ref_d}. Since \eqref{eq:ref_p} and \eqref{eq:ref_d}
	are \emph{bona fide} SDPs, they can be solved with $\intOracle$. The preceding discussion shows how to 
	recover optimal solutions to \eqref{eq:c_dual} and \eqref{eq:c_primal} from the solutions to 
	\eqref{eq:ref_d} and \eqref{eq:ref_p}.
	
	\noindent\fbox{\textbf{Case 2.} $\stdSpace = \{0\}$, i.e., $\stdCone = \S^{r,n}_+$. $R$ is the identity matrix.}
	
	First, we recall that the role of primal and dual problems in conic linear programming 
	is interchangeable in the following sense. Given $\stdMap,b,c, \stdCone$, we can 
	find $\hat \stdMap, \hat b, \hat c$ such that \eqref{eq:c_dual} is ``equivalent'' to
	\begin{align}\label{eq:ref2_d}
	\underset{ s}{\inf} & \quad \inProd{ \hat c}{s} \tag{P-D}\\
	\mbox{subject to} & \quad   \inProd{  \hat \stdMap _i}{ s } =  \hat b_i, \quad i = 1,\ldots, m\nonumber \\ 
	&\quad  s \in \stdCone, \nonumber
	\end{align}
	which is a problem in ``primal format'', where the feasible solutions correspond to the feasible slacks of \eqref{eq:c_dual}.
	Only linear algebra is needed to find $\hat \stdMap,\hat b, \hat c$.
	For example, we can take $\hat \stdMap, \hat b$ to be such that 
	\[
	s \in c+ \matRange \stdMap^* \Leftrightarrow \hat A s = \hat b.
	\]
	Next, let $\hat c$ be such that 
	$\stdMap \hat c = b$. Here, $\hat c$ is not required to be feasible for \eqref{eq:c_primal}, so no SDPs need to be solved in order to obtain $\hat c$. Nevertheless, if $y$ is a feasible solution to \eqref{eq:c_dual} and 
	$s = c- \stdMap^\T y$, we have
	\begin{equation}\label{eq:case2}
	-\inProd{b}{y} = \inProd{\hat c}{-c+c-\stdMap^\T y} = -\inProd{\hat c}{c} + \inProd{\hat c}{s}.
	\end{equation}
	From \eqref{eq:case2}, we draw two conclusions. The first is that, provided that the linear system $Ax = b$ has a solution\footnote{Which it does since we are assuming \eqref{eq:c_primal} is (strongly) feasible.}, then $\inProd{b}{y} = \inProd{b}{y'}$ if $y,y'$ are associated to the same slack of \eqref{eq:c_dual}. Therefore, without ambiguity we can associate an  objective value to an slack $s$ of \eqref{eq:c_dual}.
	The second conclusion is that the optimal values of \eqref{eq:ref2_d} and \eqref{eq:c_dual} differ by 	the constant $\inProd{\hat c}{c}$. 
	
	We can now explain in which sense \eqref{eq:c_dual} and \eqref{eq:ref2_d} are equivalent. Every feasible slack of \eqref{eq:c_dual} is a feasible solution to 
	\eqref{eq:ref2_d} (and vice-versa), and their objective values differ by $\inProd{\hat c}{c}$.
	In particular, given an optimal solution $s^*$ to \eqref{eq:ref2_d}, any $y^*$ satisfying 
	$s^* = c- \stdMap^\T y^*$ will be optimal to \eqref{eq:c_dual}.
	Furthermore, \eqref{eq:c_dual} is strongly feasible if and only if \eqref{eq:ref2_d} is strongly feasible.
	
	The dual counterpart of \eqref{eq:ref2_d} is equivalent to \eqref{eq:c_primal} in an analogous fashion and the similarly cumbersome details are omitted.
	
	In {\textbf{Case~2}}, $\stdCone^*$ corresponds to the cone $\stdCone$ in \textbf{Case~1}.
	The overall conclusion is that  in order to return to \textbf{Case~1}, it is enough to reformulate \eqref{eq:c_dual} as a problem in primal format.

	\noindent\fbox{\textbf{Case 3.} $\stdSpace \subseteq (\S^{r,n})^\perp$ is arbitrary. $R$ is the identity matrix.}
	
	Let $E_1,\ldots, E_{\ell}$ be a basis for $\stdSpace$.
	In view of \eqref{eq:ref_cone}, \eqref{eq:ref_cone_d}, we have that \eqref{eq:c_dual} and \eqref{eq:c_primal} are 
	equivalent to the following pair of problems.%	
	\begin{multicols}{2}%
		\noindent\begin{align}
		\underset{  x}{\inf} & \quad \inProd{ c}{x} \label{eq:ref3_p} \tag{P}\\ 
		\mbox{subject to} & \quad   \inProd{  \stdMap _i}{  x } =  b_i, \quad i = 1,\ldots, m\nonumber \\ 
		& \quad   \inProd{  E_j}{ x } =  0, \quad j = 1,\ldots, \ell\nonumber \\ 
		&\quad  x \in (\PSDcone{r,n})^* \nonumber
		\end{align}
		\noindent
		\begin{align}
		\underset{y,t}{\sup} & \quad \inProd{ b}{ y}  \label{eq:ref3_d} \tag{D} \\ 
		\mbox{subject to} & \quad  c - \sum _{i=1}^m  \stdMap _i  y_i - \sum _{j=1}^\ell E_jt_j  \in \PSDcone{r,n} \nonumber
		\end{align}	
	\end{multicols}
	In particular, the feasible solutions and the optimal value of \eqref{eq:ref3_p} and \eqref{eq:c_primal} are the same. Furthermore, $y$ is feasible for \eqref{eq:c_dual} if and only if 
	there exists $t$ such that $(y,t)$ is feasible for \eqref{eq:ref3_d}.
	
	The pair of problems \eqref{eq:ref3_p} and \eqref{eq:ref3_d} are in the format described 
	in \textbf{Case~2}.
	
	\noindent\fbox{\textbf{Case 4.} $\stdSpace \subseteq (\S^{r,n})^\perp$ is arbitrary, $R$ is an arbitrary orthogonal matrix.}
	
	We observe that 
	\[
	c - \sum _{i=1}^m \stdMap _i y_i \in R^\Tr (\PSDcone{r,n}\oplus \stdSpace)R\quad \Leftrightarrow\quad R cR^\Tr - \sum _{i=1}^m R\stdMap _iR^\Tr y_i \in \PSDcone{r,n}\oplus \stdSpace.
	\]
	Therefore, replacing the $c$, $\stdMap_1,\ldots, \stdMap _m$ by $R cR^\Tr$, $R\stdMap _1R^\Tr, \ldots, R\stdMap _mR^\Tr$ in 
	\eqref{eq:c_dual}, \eqref{eq:c_primal} leads to an equivalent problem that falls under \textbf{Case~3}.
\end{proof}

\section{An example}\label{app:ex}
In this appendix, we show an example of the application of Algorithm~\ref{alg:comp} to a problem 
that has a nonzero duality gap and  such that  both primal and dual optimal values are not attained. 

In the following, we denote the $r\times r$ zero matrix and identity matrix by $\zM{r}$ and $I_r$, respectively.
Similar to Section~\ref{sec:oracle}, when it is clear from context, we use $0$ to  denote a zero matrix of appropriate size.
Furthermore, when an entry of a matrix is omitted, it is assumed to be zero.

%As a convention, we use ``$*$" to denote {\em wild card} in representation of symmetric matrices, i.e., 
%if $*$ appear as a diagonal entry or a diagonal block, $*$ may be any real symmetric matrix, and 
%if $*$ appear as an off diagonal entry or an off diagonal block, $*$ may be any real matrix, but symmetry
%between $ij$ entry/block and $ji$ entry/block is assumed to hold. 

Let us consider the following problem.
\begin{align}
\underset{y \in \Re^8}{\sup} & \quad -y_4-2y_6-2y_7 \label{eq:nasty_d_alt} \tag{D} \\ 
\mbox{subject to} & \quad \begin{pmatrix}
y_1   &        &      &           &              &              &      & y_3-1 \\
& y_1   &      &           &              &              &      & y_5-1 \\
&        & y_2 & y_3      &              &              &      &        \\
&        & y_3 & y_4-y_5 &              &              &      &        \\
&        &      &           & y_4         & -0.5y_8+0.5 & y_6 &        \\
&        &      &           & -0.5y_8+0.5 & y_8         & y_7 &        \\
&        &      &           & y_6         & y_7         &  0    &        \\
y_3-1 & y_5-1 &      &           &              &              &      &   0    
\end{pmatrix}  \in \PSDcone{8}. \nonumber
\end{align}
%Here, omitted entries correspond to zero. 
We also select $c, \stdMap_1, \ldots, \stdMap _8 \in \S^8$ such that $(y_1,\ldots, y_8)$ is feasible 
for \eqref{eq:nasty_d_alt} if and only if 
\begin{equation}\label{eq:nasty_slack}
c - \sum _{i=1}^8 \stdMap _i y_i \in \PSDcone{n}.
\end{equation}
The corresponding primal of \eqref{eq:nasty_d_alt} is
\begin{align}
\underset{x}{\inf} & \quad -2x_{18} -2x_{28} + x_{56} \label{eq:nasty_p}\tag{P}\\ 
\mbox{subject to} & \quad -x_{11}-x_{22}  = 0 \nonumber \\  
& \quad -x_{33} = 0 \nonumber  \\
& \quad -2x_{18} -2x_{34}  = 0 \nonumber  \\
& \quad -x_{44} -x_{55}  = -1 \nonumber  \\
& \quad -2x_{28} +x_{44}  = 0 \nonumber  \\
& \quad -2x_{57}  = -2 \nonumber  \\
& \quad -2x_{67}  = -2 \nonumber  \\
& \quad x_{56} - x_{66}  = 0 \nonumber  \\
&\quad x \in \PSDcone{8}  \nonumber. 
\end{align}

\subsection{The optimal values of \eqref{eq:nasty_p} and \eqref{eq:nasty_d_alt} and their unattainment }
We check that $\dOpt = -1$, $\pOpt = 0$ and that neither \eqref{eq:nasty_d_alt} nor its primal 
are attained.

Let $y$ be a feasible solution for \eqref{eq:nasty_d_alt}. 
At the dual side \eqref{eq:nasty_d_alt}, the $(7,7)$ and $(8,8)$ entries
are $0$, therefore any feasible $y$ must satisfy \[y_3 = y_5 = 1, \quad y_6 = y_7 = 0.\]
 In addition,  we have \[y_4 - y_5 = y_4 -1 \geq 0,\] 
 since the $(4,4)$ entry must be nonnegative.  Similarly, we have $y_8 \geq 0$.
 It follows that $\dOpt \leq -1$. However, for every $\epsilon > 0$, 
\[y_\epsilon = (0,1/\epsilon,1, 1+\epsilon,1,0,0,0 )\] 
is a feasible solution that has value 
equal to $-1-\epsilon$. This shows that $\dOpt = -1$. 

We will now show that $\dOpt$ is not attained. For the sake of obtaining a contradiction, 
suppose that we had $y_4 =1$ for some feasible $y$. Since $y_5 =1$,  this would imply that the $(4,4)$ entry is zero. This forces $y_3$ to be zero and, consequently, the $(1,8)$ entry must $-1$, which contradicts positive semidefiniteness. 

Next, we move on the primal problem \eqref{eq:nasty_p}.
Let $x$ be a feasible solution to \eqref{eq:nasty_p}. 
The  first and second equality constraints force \[x_{11} = x_{22} = x_{33} = 0,\] which implies that 
$x_{18} = x_{28} = 0$. Then, the fifth constraint implies that 
$x_{44} = 0$. Therefore, $x_{55} = 1$ by the fourth constraint. The last equality constraint forces $x_{56} = x_{66}$, 
which implies that $\pOpt \geq 0$. 
Putting everything together we have that $x$ is feasible for \eqref{eq:nasty_p} if and only if 
$x$ is positive semidefinite and has the following shape
\begin{equation}\label{eq:nasty_shape_p}
\begin{pmatrix}
\zM{4}    &           &            &             &    \\
        & 1            & x_{56}       & x_{57} &  x_{58}       \\
        & x_{56}       & x_{56}       & 1    &   x_{68}      \\
       & x_{57}       & 1            & x_{77}    &  x_{78}      \\
        & x_{58}       &  x_{68}      & x_{78}     &   x_{88}   
\end{pmatrix}.
\end{equation}
Then, because $x_{67} = 1$, we can never 
assign zero to $x_{66}=x_{56}$. However, if $x_{66}=x_{56}$ is small but positive, we can 
construct feasible points by taking 
\[
x_{57} = x_{58} = x_{68} = x_{78} = 0
\]
and taking $x_{77}$ and $x_{88}$ to be very large. 
This shows that $\pOpt = 0$ but is not attained.

\subsection{Applying Algorithm~\ref{alg:comp}}
We run Algorithm~\ref{alg:comp} with \eqref{eq:nasty_d_alt} as input.
The first step is to apply facial reduction (Algorithm~\ref{alg:fra}) to \eqref{eq:nasty_d_alt}. 

A possible 
reducing direction is 
\begin{equation*}
d_1 = \begin{pmatrix}
\zM{6}   & 0  \\
0   &  I_2    
\end{pmatrix}.
\end{equation*}
Let $\stdFace = \PSDcone{8}\cap \{d_1\}^\perp$, then
\[
\stdFace = \PSDcone{6,8} = \left\{ \begin{pmatrix} U & 0 \\ 0 & 0 \end{pmatrix} \in \S^8 \relmiddle{\vert}  U\in \PSDcone{6}\right\}.
\]
Recalling \eqref{eq:face_ri}, the relative interior of $\stdFace$ correspond to the matrices in $\stdFace$ for which $U$ is positive definite.
Therefore, we see that $\stdFace$ is the minimal face $\stdFace_{\min}^D$ of \eqref{eq:nasty_d_alt} by constructing the following feasible solution 
\begin{equation}\label{eq:hat_y}
\hat y = (\hat y_1, \hat y_2, \hat y_3, \hat y_4, \hat y_5, \hat y_6, \hat y_7, \hat y_8) = (1,2,1,2,1,0,0,1).
\end{equation}
Indeed, $\hat s=c - \stdMap^* \hat y$ is a matrix with nonzero entries
\[
\hat s_{11}=\hat s_{22}=\hat s_{44}= \hat s_{66}=1,\ \hat s_{33}=\hat s_{55}=2,\ \hat s_{34}=\hat s_{43}=1,
\]
which is a relative interior point of $\stdFace$.
Let (\^D) be the problem obtained by replacing $\PSDcone{8}$ by $\stdFace_{\min}^D =\stdFace$ in 
\eqref{eq:nasty_d_alt} (see also Section~\ref{sec:dfr_opt}).
Let (\^P) be the ``dual'' problem of (\^D), which is obtained 
by replacing $\PSDcone{8}$ by $(\stdFace_{\min}^D)^*$ in \eqref{eq:nasty_p}. Then $(\stdFace_{\min}^D)^*$ satisfies
\begin{equation}\label{newB4}
(\stdFace_{\min}^D)^* = (\PSDcone{6,8})^* = \left\{ \begin{pmatrix} U & V \\ V^\Tr & W \end{pmatrix} \in \S^8 \relmiddle{\vert} U\in \PSDcone{6}\right\}.
\end{equation}
This has the effect of relaxing the constraint ``$x \in \PSDcone{8}$'' 
to merely requiring that the upper left $6 \times 6$ block of $x$ be positive 
semidefinite.  The relative interior of $(\stdFace_{\min}^D)^*$ correspond to the matrices in 
\eqref{newB4} for which $U$ is positive definite.

At this stage, (\^D) is strongly feasible and shares the same optimal value 
with \eqref{eq:nasty_d_alt}. Because of this, the duality gap between 
(\^P) and (\^D) is zero and (\^P) has an optimal solution of value $-1$.
Indeed, let $x$ be the symmetric matrix having 
\[
x_{56}=x_{66} = -1, x_{57} = x_{67} = x_{55} = 1
\]
and let the other entries be zero. Then $x \in (\stdFace_{\min}^D)^*$ and 
is an optimal solution to (\^P).  Though it possesses an optimal solution, (\^ P) may not be strongly feasible.
We also remark that (\^D) does not have an optimal solution, since the feasible regions of (D) and (\^ D) are the same
and (D) does not have an optimal solution as we saw previously.

The next step in Algorithm~\ref{alg:comp} is applying facial reduction to 
(\^P).
We observe that (\^P) is not strongly feasible.  
The first two equality constraint in (P) and \eqref{newB4} yields that any feasible solution to (\^P) must have the upper left $3\times 3$ block equal to zero. 

Let $y^1\in \Re^8$ be such that $y^1_1 = y^1_2 = 1$ and the other entries are zero.
A reducing direction to (\^P) is given by the following matrix 
\begin{equation}
f_1 = \begin{pmatrix} I_3 & \zM{} \\
\zM{} & \zM{5}
\end{pmatrix} = -\sum _{i=1}^8 \stdMap_i y ^1_i = - \stdMap_1 - \stdMap _2, \label{eq:dir_y}
\end{equation}
where $\stdMap _i$ is as in \eqref{eq:nasty_slack}.
Let $\hat \stdFace \coloneqq (\stdFace_{\min}^D)^* \cap \{f_1\}^\perp$. $\hat \stdFace $
is a face of $(\stdFace_{\min}^D)^*$ that can be described as follows. 
\begin{equation}\label{eq:face_nasty_p}
\hat \stdFace
=\left\{\begin{pmatrix} \zM{3} & 0 & V_1 \\ 
0 & U & V_2 \\
V_1^\Tr & V_2^{\Tr} & W \end{pmatrix} \in \S^8 \relmiddle{\vert}
U \in \PSDcone{3}\right\}.
\end{equation}
The relative interior of $\hat \stdFace $ consists of the matrices in \eqref{eq:face_nasty_p} for which $U \in \PSDcone{3}$ is positive definite.
We see that  $\hat \stdFace$ is the minimal face of (\^ P) by letting $x \in \hat \stdFace$ such that 
\[
x_{45} = x_{46} = 0,\quad
x_{57} = x_{67} = 1,\quad  x_{56} = x_{66} = 0.25, \quad x_{44} = x_{55} = 0.5, \quad x_{28}=0.25,
\]

Next, let (P*) denote the problem obtained by replacing $\PSDcone{8}$ by 
$\hat \stdFace$ in \eqref{eq:nasty_p}. Let (D*) be the corresponding dual 
problem, which is obtained by replacing $\PSDcone{8}$ by $\hat \stdFace^*$ in 
\eqref{eq:nasty_d_alt}. We have  
\begin{equation}\label{eq:face_nasty_p_dual}
\hat \stdFace^* =\left \{\begin{pmatrix} W & V_1 & V_2 \\ 
V_1^{\Tr} & U & 0 \\
V_2^{\Tr} & 0 & \zM{2} \end{pmatrix} 
 \in \S^8 \relmiddle{\vert}
U \in \PSDcone{3}\right\}.
\end{equation}

By Theorem~\ref{theo:double}, we have 
$\dOpt = \opt{\text{P*}} = \opt{\text{D*}} = -1$.
And, indeed, the following solution $y^*$ is optimal to 
(D*) with $\inProd{b}{y^*} =\theta_D=-1.$
\begin{equation}\label{eq:opt_y}
y^*= (y_1,y_2,y_3,y_4,y_5,y_6,y_7,y_8) = (0,0,1,1,1,0,0,1).
\end{equation}
The nonzero entries of the slack matrix $s^*=c - \stdMap^* y^*$ are
\[
s^*_{55}=s^*_{66}= 1, s^*_{34}=s^*_{43}=1.
\]
We note that $s^* \in \hat \stdFace^*$ but $s^* \not \in \stdFace_{\min}^D$ and $s^* \not\in \PSDcone{8}$.
Thus, $y^*$ is not a feasible solution to (\^ D) nor (D).

At this stage, (P*) and (D*) are both strongly feasible and they can 
be solved by the interior point oracle, as in Step~\ref{alg:comp_o} of Algorithm~\ref{alg:comp}.
Knowing that the common optimal value of (P*) and (D*) is $-1$, Algorithm~\ref{alg:comp}
then checks whether there is an optimal solution to \eqref{eq:nasty_d_alt} by solving 
\eqref{eq:d_opt_std}. 

Since we already know that \eqref{eq:nasty_d_alt} is not attained, 
we skip this step and go to construction of an almost optimal solution to 
\eqref{eq:nasty_d_alt}. 
Let $\hat y, (f_1, y^1), y^*$ be as in \eqref{eq:hat_y}, \eqref{eq:dir_y}, \eqref{eq:opt_y}, respectively.
We feed $(f_1,y^1), \hat y, y^*$ and some $\epsilon > 0$ to 
Algorithm~\ref{alg:eps}.

Suppose that, say, $\epsilon = 0.1$. Recall that $\inProd{b}{\hat y}= -2$.  Let 
\[
\beta = \frac{\dOpt-\inProd{b}{\hat{y}} - \epsilon}{\dOpt-\inProd{b}{\hat{y}}}=0.9.
\]
The  computation done in Algorithm~\ref{alg:eps} boils 
down to finding $\alpha > 0$ such that
\[
c - \sum _{i=1}^8 \stdMap_i(\beta y^*_i + (1-\beta)\hat y + \alpha y^1) = \beta s^* + (1-\beta) \hat s + \alpha f_1 \in \reInt \minFaceD.
\]
Since $\inProd{b}{y^1} = 0$, moving in the direction of $y^1$ does not change the objective value,
and since $f_1$ is positive semidefinite, it does not violate the cone constraint, no matter how large $\alpha$ is.
Taking $\alpha = 10$ is enough for the purpose, so, we see that 
\[
\tilde y= \beta y^*_i + (1-\beta)\hat y + 10 y^1
\]
is an $\epsilon$-optimal solution to \eqref{eq:nasty_d_alt}.
Indeed, $\tilde s = c - \stdMap^* \tilde y$ is a positive semidefinite matrix whose nonzero entries are
\[
\tilde s_{11}=\tilde s_{22}=10.1, \ \tilde s_{33}=10.2, \ \tilde s_{55}=0.1,\ \tilde s_{66}=1,
\ \tilde s_{34}=\tilde s_{43}=1.
\]

\end{document}